%% Based on a TeXnicCenter-Template by Gyorgy SZEIDL.
%%%%%%%%%%%%%%%%%%%%%%%%%%%%%%%%%%%%%%%%%%%%%%%%%%%%%%%%%%%%%

%------------------------------------------------------------
%
\documentclass[a4paper,11pt]{article}%

\pdfoutput=1 
%Options -- Point size:  10pt (default), 11pt, 12pt
%        -- Paper size:  letterpaper (default), a4paper, a5paper, b5paper
%                        legalpaper, executivepaper
%        -- Orientation  (portrait is the default)
%                        landscape
%        -- Print size:  oneside (default), twoside
%        -- Quality      final(default), draft
%        -- Title page   notitlepage, titlepage(default)
%        -- Columns      onecolumn(default), twocolumn
%        -- Equation numbering (equation numbers on the right is the default)
%                        leqno
%        -- Displayed equations (centered is the default)
%                        fleqn (equations start at the same distance from the right side)
%        -- Open bibliography style (closed is the default)
%                        openbib
% For instance the command
%           \documentclass[a4paper,12pt,leqno]{article}
% ensures that the paper size is a4, the fonts are typeset at the size 12p
% and the equation numbers are on the left side
%
%\addtolength{\hoffset}{-1cm}
%\addtolength{\textwidth}{2cm}

\usepackage{amsmath}%
\usepackage{amsfonts}%
\usepackage{amssymb}%
\usepackage{amsthm}
\usepackage{stmaryrd}
\usepackage{graphicx}
\usepackage[utf8]{inputenc}
\usepackage[T1]{fontenc}
\usepackage[french, english]{babel}
\usepackage{enumitem}
\usepackage{tikz}
\usetikzlibrary{decorations.markings}
\usepackage[colorlinks=true]{hyperref}

%-------------------------------------------

\newtheorem{thm}{Theorem}

\newtheorem{theorem}{Theorem}

\newtheorem{claim}[theorem]{Claim}

\newtheorem{lemma}[theorem]{Lemma}

\newtheorem{proposition}[theorem]{Proposition}

\theoremstyle{definition}
\newtheorem{definition}[theorem]{Definition}

\theoremstyle{remark}
\newtheorem{example}[theorem]{Example}
\newtheorem{remark}[theorem]{Remark}

\newlength{\espaceavantspecialthm}
\newlength{\espaceapresspecialthm}
\setlength{\espaceavantspecialthm}{\topsep}
\setlength{\espaceapresspecialthm}{\topsep}

\newcommand{\R}{\mathbb{R}}
\newcommand{\N}{\mathbb{N}}

\newcommand{\Z}{\mathbb{Z}}
\newcommand{\T}{\mathbb{T}}
\newcommand{\Sp}{\mathbb{S}}

\newcommand{\M}{\mathcal{M}}

\newcommand{\varep}{\varepsilon}

\newcommand{\conv}{\operatorname{conv}}
\newcommand{\Homeo}{\operatorname{Homeo}}
\newcommand{\card}{\operatorname{Card}}

\newcommand{\Id}{\operatorname{Id}}

\newcommand{\Span}{\operatorname{span}}
\newcommand{\ud}{\,\mathrm{d}}
\newcommand{\inte}{\operatorname{int}}

{\\}

\newenvironment{defi*}[1][]{
\vskip \espaceavantspecialthm \noindent \textbf{D\'efinition.} }%
{\vskip \espaceapresspecialthm}

\addtolength{\voffset}{-1cm}
\addtolength{\textheight}{2cm}
\addtolength{\hoffset}{-1cm}
\addtolength{\textwidth}{2cm} 
\addtolength{\marginparwidth}{.7cm}

\counterwithin{equation}{section}

%%%%%%% Commande \hgline pour dessiner des géodésiques dans le modèle du disque hyperbolique

\tikzset{->-/.style={decoration={
  markings,
  mark=at position .5 with {\arrow{latex}}},postaction={decorate}}}

\newcommand\test[1]{
\pgfmathsetmacro{\var}{#1}
\pgfmathparse{ifthenelse(\var>=0,"positif","négatif")} \pgfmathresult}%

\newcommand{\hgline}[3]{
\pgfmathsetmacro{\thetaone}{#1}
\pgfmathsetmacro{\thetatwo}{#2}
\pgfmathsetmacro{\theta}{(\thetaone+\thetatwo)/2}
\pgfmathsetmacro{\phi}{abs(\thetaone-\thetatwo)/2}
\pgfmathsetmacro{\close}{less(abs(\phi-90),0.0001)}
\ifdim \close pt = 1pt
    \draw[->-, color=#3] (\thetaone:1) -- (\thetatwo:1);
\else
	\pgfmathsetmacro{\R}{tan(\phi)}
	\pgfmathsetmacro{\test}{(\thetaone-\thetatwo)/abs(\thetaone-\thetatwo)}
	\draw[->-, color=#3] (\thetaone:1) arc (\thetaone+\test*90:\thetaone+\test*(270-2*\phi):\R);
\fi
}

\newcommand{\hglinefill}[3]{
\pgfmathsetmacro{\thetaone}{#1}
\pgfmathsetmacro{\thetatwo}{#2}
\pgfmathsetmacro{\theta}{(\thetaone+\thetatwo)/2}
\pgfmathsetmacro{\phi}{abs(\thetaone-\thetatwo)/2}
\pgfmathsetmacro{\close}{less(abs(\phi-90),0.0001)}
\ifdim \close pt = 1pt
    \filldraw[->-, color=#3] (\thetaone:1) -- (\thetatwo:1) ;
\else
	\pgfmathsetmacro{\R}{tan(\phi)}
	\pgfmathsetmacro{\test}{(\thetaone-\thetatwo)/abs(\thetaone-\thetatwo)}
	\filldraw[color=#3, opacity=.2] (\thetaone:1) arc (\thetaone+\test*90:\thetaone+\test*(270-2*\phi):\R) arc (\thetaone+\test*(270-2*\phi)-90:\thetaone+\test*90+90:1);
\fi
}

\begin{document}

\sloppy

\title{Hyperbolic isometries of the fine curve graph of higher genus surfaces}
\author{Pierre-Antoine Guih\'eneuf, Emmanuel Militon}
\maketitle

\begin{abstract}
We prove that for a homeomorphism $f$ that is isotopic to the identity on a closed hyperbolic surface, the following are equivalent:
\begin{itemize}
\item $f$ acts hyperbolically on the fine curve graph;
\item $f$ is isotopic to a pseudo-Anosov map relative to a finite $f$-invariant set;
\item the ergodic homological rotation set of $f$ has nonempty interior.
\end{itemize}
\end{abstract}

\selectlanguage{english}

\setcounter{tocdepth}{1}
\tableofcontents

\section{Introduction}\label{Sec:intro}

Let $S$ be a closed orientable surface with genus $\geq 2$. We denote $\Homeo(S)$ the space of homeomorphisms of $S$, and $\Homeo_0(S)$ the connected component of $\Id_S$ in $\Homeo(S)$. 

The \emph{fine curve graph} $C^\dagger(S)$ of $S$ was introduced by Bowden, Hensel, and Webb \cite{BHW} to give a counterpart of the classical \emph{curve graph} adapted to the study of the group of all homeomorphisms of $S$.

\begin{definition}
The \emph{fine curve graph} on the surface $S$ is the graph $C^\dagger(S)$ whose vertices are essential\footnote{\emph{I.e.} non contractible.} simple loops. There is an edge between two vertices $\alpha$ and $\beta$ if and only if the loops $\alpha$ and $\beta$ are disjoint.
\end{definition}

As a consequence of the Gromov hyperbolicity of the classical curve graphs for punctured surfaces, it was proved in \cite{BHW} that the fine curve graph $C^\dagger(S)$ is Gromov hyperbolic. The authors also prove that this graph is connected and has infinite diameter. 
This enables them to use large scale geometry techniques to study $\Homeo(S)$ via its action on $C^\dagger(S)$. As an application, they prove that, for any closed surface $S$ of genus $\ge 1$, the commutator length and the fragmentation norm on $\mathrm{Homeo}_0(S)$ are unbounded, answering a question posed by Burago, Ivanov, and Polterovich \cite{zbMATH05526532}.
\medskip

In the same way as the mapping class group $\mathrm{Map}(S)$ acts on $C(S)$ by isometries, the whole homeomorphism group $\mathrm{Homeo}(S)$ acts on $C^\dagger(S)$ by isometries. Gromov has classified isometries of Gromov hyperbolic spaces (see \cite[paragraph 8]{zbMATH04031953} or \cite{zbMATH01385418}) according to the asymptotic translation length, defined for an isometry $g$ of a Gromov hyperbolic space $X$ as
\[|g|_X = \lim_{n\to+\infty}\frac{1}{n} d_X\big(x,g^n(x)\big).\]
It is a standard exercise to see that this limit exists and is independent of $x$. This independence immediately implies that the asymptotic translation length is a conjugacy invariant of isometries of $X$.
Gromov classification is then as follows: for $g$ an isometry of a Gromov hyperbolic space, $g$ is
\begin{itemize}
\item \textbf{Hyperbolic} (or loxodromic) if the asymptotic translation length is positive;
\item \textbf{Parabolic} if the asymptotic translation length is zero but $g$ has no
finite diameter orbits, and
\item \textbf{Elliptic} if $g$ has finite diameter orbits.
\end{itemize}
There is an equivalent reformulation of this trichotomy in terms of fixed points on the Gromov boundary of $X$, but we do not require this point of view in the present work.
\bigskip

The present work aims at starting the classification of the type of isometry actions of $f\in\Homeo_0(S)$ on $C^\dagger(S)$ in terms of rotational properties of $f$.\footnote{Note that by a theorem of Long, Margalit, Pham, Verberne and Yao \cite{long2021automorphisms}, any isometry of $C^\dagger(S)$ is induced by the action of some homeomorphism.}
Such a classification is now completed in the case of the torus due to works of Bowden,  Hensel, Mann, Militon and Webb \cite{BHMMW} and Guihéneuf and Militon \cite{guiheneuf2023parabolic}. In particular, we have the following.

\begin{theorem}[\cite{BHMMW}, Theorem 1.3]
Let $f\in\Homeo_0(\T^2)$.
The following are equivalent:
\begin{itemize}
\item $f$ acts hyperbolically on $C^\dagger(\T^2)$;
\item the \emph{rotation set} of $f$ has non-empty interior;
\item there is a finite, $f$-invariant set $F\subset\T^2$ such that the restriction of $f$ to $\T^2\setminus F$ represents a pseudo-Anosov mapping class.
\end{itemize}
\end{theorem}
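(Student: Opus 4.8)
The plan is to prove the cyclic chain of implications $(2)\Rightarrow(3)\Rightarrow(1)\Rightarrow(2)$, from which $(3)\Rightarrow(2)$ follows by composition; essentially all of the difficulty sits in $(1)\Rightarrow(2)$. \emph{For $(2)\Rightarrow(3)$} one uses classical surface dynamics. Assuming $\rho(\tilde f)$ has nonempty interior, pick three non-collinear rational points in its interior; by Franks' realization theorem each is the rotation vector of a periodic orbit $O_i$, and set $F_0=O_1\cup O_2\cup O_3$. Apply the Nielsen--Thurston classification to the class induced by $f$ on $\T^2\setminus F_0$: if it is pseudo-Anosov we are done; if it is of finite order, some power of $f$ is isotopic to the identity relative to $F_0$, hence some power of $\tilde f$ is isotopic to a covering translation and $\rho(\tilde f)$ is a single point --- impossible; if it is reducible, then either some reduction curve is essential in $\T^2$, and then $f$ preserves the isotopy class of an essential loop, which forces $\rho(\tilde f)$ onto an affine line --- again impossible --- or every reduction curve is inessential in $\T^2$, in which case we cap off the punctured disks they bound and recurse on the resulting torus with fewer punctures, on which $\tilde f$ has the same rotation set. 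As the topological complexity strictly decreases, this terminates and produces a finite $f$-invariant set $F$ with $f|_{\T^2\setminus F}$ pseudo-Anosov. (This equivalence goes back to Llibre--MacKay and Franks.)

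\emph{For $(3)\Rightarrow(1)$}, write $\Sigma=\T^2\setminus F$, so $f$ induces a pseudo-Anosov class $\varphi$ on $\Sigma$, which acts loxodromically on the ordinary curve graph $C(\Sigma)$ (Masur--Minsky). Define a coarse map $\Psi\colon C^\dagger(\T^2)\to C(\Sigma)$ by letting $\Psi(\gamma)$ be the set of isotopy classes in $\Sigma$ of essential simple loops of $\T^2$ disjoint from $\gamma$ and from $F$. Then $\Psi(\gamma)\neq\emptyset$, since a core of the annulus $\T^2\setminus\gamma$ can be pushed off $F$ while staying essential and non-peripheral in $\Sigma$; $\Psi(\gamma)$ has diameter bounded only in terms of $|F|$, since the loops it records are core-parallel in $\T^2\setminus\gamma$ and so realize boundedly many isotopy classes of $\Sigma$; $\Psi$ sends adjacent vertices to overlapping sets, since a core of one of the two annuli of $\T^2\setminus(\gamma\cup\gamma')$ missing $F$ lies in $\Psi(\gamma)\cap\Psi(\gamma')$ whenever $\gamma\cap\gamma'=\emptyset$; and $\Psi(f\gamma)=\varphi_*\Psi(\gamma)$ because $f$ preserves $F$ setwise. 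Comparing $d_{C^\dagger(\T^2)}(\gamma_0,f^n\gamma_0)$ with $d_{C(\Sigma)}(\Psi\gamma_0,\varphi_*^n\Psi\gamma_0)$ and using loxodromicity of $\varphi$ on $C(\Sigma)$ shows $\{f^n\gamma_0\}$ grows linearly in $C^\dagger(\T^2)$, so $f$ is hyperbolic.

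\emph{For $(1)\Rightarrow(2)$}, the crux, we argue by contraposition: if $\rho(\tilde f)$ has empty interior --- a point or a non-degenerate segment --- we must show $|f|_{C^\dagger(\T^2)}=0$. The structure theory of torus homeomorphisms with a thin rotation set (Koropecki--Tal's strictly toral dynamics, Le Calvez--Tal's forcing theory) separates two regimes. Either $f$, up to a power, admits an invariant essential loop or an invariant essential annulus, and then the orbit of a well-chosen loop stays within bounded distance of a fixed vertex of $C^\dagger(\T^2)$, so $f$ is elliptic; or $f$ lies in a ``pseudo-rotational'' regime with sublinear rotational deviations, and one must convert this analytic tameness into a sublinear bound on $d_{C^\dagger(\T^2)}(\gamma_0,f^n\gamma_0)$ --- passing from control of how $\tilde f^n$ moves points of $\R^2$ to control of how complicated the loop $f^n\gamma_0$ becomes in the fine curve graph. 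This conversion is the real obstacle. The robust way to carry it out is to show, via the forcing theory of Le Calvez and Tal applied to a maximal isotopy and a transverse foliation for $f$ --- using the quasi-axis of the (hypothetically) loxodromic $f$ in $C^\dagger(\T^2)$ to locate the transverse self-intersections of trajectories that drive the argument --- that a loxodromic $f$ necessarily creates a rotational horseshoe, whose rotation set is automatically two-dimensional; this contradiction closes the cycle.
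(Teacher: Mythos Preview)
This theorem is not proved in the present paper at all: it is quoted as Theorem~1.3 of \cite{BHMMW} and serves only as motivation for the higher-genus Theorem~\ref{maintheorem}. So there is no ``paper's own proof'' to compare your proposal against. That said, the paper does prove the higher-genus analogue, and its strategy (together with what it cites from \cite{BHMMW}) lets one assess your outline.

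Your $(2)\Rightarrow(3)$ is the Llibre--MacKay/Franks argument, exactly parallel to what the paper invokes in Section~\ref{Sec:1implies2} for higher genus. Your $(3)\Rightarrow(1)$ is essentially the coarse Lipschitz projection $C^\dagger(S)\to C(S\setminus F)$ of \cite[Lemma~4.2]{BHW} combined with Masur--Minsky, which is precisely what the paper cites for its implication $3\Rightarrow 4$. Those two parts are fine.

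Your $(1)\Rightarrow(2)$ is where the comparison is substantive, and where your proposal has a gap. Both \cite{BHMMW} and the present paper (for higher genus, Section~\ref{Sec:Last}) argue the contrapositive by a direct, elementary estimate: if the rotation set lies in a line, one passes to a cyclic cover transverse to that direction, introduces a ``level'' function $\ell$, and bounds $d_{C^\dagger}(f^n c,c)$ by the number $\mathcal{C}_c(f^n c)$ of lifts of $c$ crossed by a lift of $f^n c$ (Lemma~\ref{Lem:distanceestimates} here, Lemma~4.5 in \cite{BHMMW}); the thin rotation set forces $\frac{1}{n}\mathcal{C}_c(f^n c)\to 0$, so the asymptotic translation length vanishes. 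No forcing theory, no horseshoes, no case split into ``annular'' versus ``pseudo-rotational'' regimes is needed. Your alternative route through Koropecki--Tal and Le~Calvez--Tal forcing is not obviously wrong, but the key step---``using the quasi-axis of the (hypothetically) loxodromic $f$ in $C^\dagger(\T^2)$ to locate the transverse self-intersections of trajectories that drive the argument''---is not an argument, it is a hope. You have not explained how geometric information about a quasi-geodesic in $C^\dagger(\T^2)$ produces the $\mathcal{F}$-transverse self-intersection that forcing theory requires, nor why the resulting horseshoe must be \emph{rotational} (i.e., with two-dimensional rotation set). The covering-space argument bypasses all of this and is both shorter and more robust.
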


In the present work we get a counterpart of this statement for closed surfaces of genus $g\ge 2$, in terms of ergodic rotation set $\rho_{erg}(f)\subset H_1(S,\mathbb{R})$ of $f$, which is defined in the next section.

\begin{thm}\label{maintheorem}
Let $f \in \mathrm{Homeo}_0(S)$.
The following are equivalent:
\begin{enumerate}
\item The following does not hold. Either $\rho_{erg}(f)$ is contained in a hyperplane of $H_1(S,\mathbb{R})$, or there exists two nonempty rational subspaces $E$ and $F$ of $H_1(S,\mathbb{R})$, which are orthogonal for the intersection form $\wedge$ (defined after the theorem), and such that $\rho_{erg}(f) \subset E \cup F$. 
\item The ergodic rotation set $\rho_{erg}(f)$ has nonempty interior in $H_1(S,\mathbb{R})$.
\item There exists a finite $f$-invariant subset $F \subset S$ such that $f$ is isotopic to a pseudo-Anosov homeomorphism relative to $F$ (this notion is recalled in Section~\ref{Sec:1implies2}).
\item The homeomorphism $f$ acts hyperbolically on $C^{\dagger}(S)$.
\end{enumerate}
\end{thm}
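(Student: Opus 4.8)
The strategy is to establish $(1)\Leftrightarrow(2)$ as a structural statement about the subset $\rho_{erg}(f)\subset H_1(S,\mathbb{R})$, and then to close the cycle $(2)\Rightarrow(3)\Rightarrow(4)\Rightarrow(2)$ among the three dynamically meaningful conditions. For $(1)\Leftrightarrow(2)$: the implication $(2)\Rightarrow(1)$ is immediate, since hyperplanes and unions of two proper linear subspaces have empty interior. For $(1)\Rightarrow(2)$ one needs a description of $\rho_{erg}(f)$: its closed convex hull is the full homological rotation set $\rho(f)$, whose extreme points already lie in $\rho_{erg}(f)$ by the ergodic decomposition, and, crucially, whenever two points $a,b\in\rho_{erg}(f)$ satisfy $a\wedge b\neq 0$ an entire two-dimensional region near $a$ and $b$ lies in $\rho_{erg}(f)$ --- a ``combination of rotation vectors'' supplied by the forcing theory used below. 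Granting this, if $\rho_{erg}(f)$ avoids every hyperplane and every $\wedge$-orthogonal union $E\cup F$, then diagonalising the intersection form on the rational subspace it spans produces two of its points with nonzero pairing whose combined region is full-dimensional, whence nonempty interior. I would package this as a self-contained lemma.

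For $(2)\Rightarrow(3)$ I would apply the forcing theory for transverse trajectories of surface homeomorphisms (Le Calvez--Tal) to a maximal isotopy of $f$ and its transverse singular foliation. Nonempty interior of $\rho_{erg}(f)$ yields, for every rational vector in a small ball, a periodic orbit realising that homological rotation vector; choosing finitely many such orbits $O_1,\dots,O_k$ whose rotation vectors affinely span a neighbourhood, set $F=O_1\cup\cdots\cup O_k$ and run the Nielsen--Thurston classification of $f$ rel $F$. The finite-order case is impossible, as it would make $\rho_{erg}(f)$ finite; in the reducible case one reads along the canonical reduction system that the rotation set is assembled from those of the complementary pieces, so some piece must already have a rotation set with nonempty interior, and, adding to $F$ periodic orbits interior to that piece and inducting on the complexity of the surface, one is reduced to a pseudo-Anosov piece; a final enlargement of $F$ then makes $f$ pseudo-Anosov rel $F$ outright. (Equivalently, Le Calvez--Tal produces directly a rotational horseshoe, which one packages into the desired pseudo-Anosov class.)

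For $(3)\Rightarrow(4)$ the idea is to transfer loxodromicity from the classical curve graph $C(S\setminus F)$, on which the mapping class of $f$ rel $F$ --- being pseudo-Anosov --- acts with positive asymptotic translation length by Masur--Minsky. One would like a map relating $C^{\dagger}(S)$ and $C(S\setminus F)$ (via geodesic representatives in a fixed cusped hyperbolic metric on $S\setminus F$, or via subsurface projection to the complement of small disks around $F$) that restricts on orbits to a quasi-isometric embedding; the difficulty is that any such map is only coarsely equivariant, and coarse equivariance lets the error grow linearly in $n$, so it does not by itself transfer a positive (but possibly tiny) translation length. The real content is therefore a lower bound directly in $C^{\dagger}(S)$: a loop that $f^n$ has ``stretched'' along the pseudo-Anosov unstable foliation lies at $C^{\dagger}(S)$-distance growing linearly in $n$ from the loop and its backward iterates, and this must persist after applying the part of $f$ that is isotopic to the identity rel $F$ --- which acts trivially on $C(S\setminus F)$, hence cannot undo the homotopy-class stretching, but acts wildly on $C^{\dagger}(S)$ and so requires a genuine argument to rule out its cancelling the stretching seen there.

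Finally, $(4)\Rightarrow(2)$ is proved contrapositively, and I expect it --- together with the lower bound above --- to be the main obstacle. Assuming $\rho_{erg}(f)$ has empty interior, the already-established equivalence $(1)\Leftrightarrow(2)$ leaves two cases. If $\rho_{erg}(f)$ lies in a hyperplane, some cohomology class is constant on $\rho_{erg}(f)$; realising it by a transverse multicurve $c$ adapted to a maximal isotopy, the constancy of the average flux of $f$ across $c$ should confine the iterates $f^n(c)$ to a subgraph of $C^{\dagger}(S)$ of bounded diameter, so $f$ does not act hyperbolically. If instead $\rho_{erg}(f)\subset E\cup F$ with $E,F$ rational and $\wedge$-orthogonal, the orthogonality forces a coarsely $f$-invariant subsurface splitting of $S$ along separating curves carrying the class $0\in H_1(S,\mathbb{R})$ between the pieces supporting $E$ and $F$; since every essential simple loop inside one side is at distance at most two from such a separating curve, any orbit eventually confined to one side is bounded, while the reducibility excludes loxodromicity in the remaining case. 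The genuinely new difficulty, absent on the torus, is that the degeneracy is only homological and averaged: $f$ preserves no smooth structure and need not be reducible in any naive sense, so the bounded-diameter subgraph or the coarsely invariant curve confining the orbit must be extracted from the dynamics --- again via the Le Calvez--Tal transverse foliation of a maximal isotopy chosen compatibly with the degenerate homology direction --- and one must then bound from above the $C^{\dagger}(S)$-distance between a loop and its forward iterates, the mirror of the lower-bound problem of the previous step.
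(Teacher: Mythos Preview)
Your overall architecture matches the paper's (the same cycle of implications, the same use of Nielsen--Thurston theory and of Lellouch-type forcing for combining rotation vectors), but several steps are either overcomplicated or contain genuine gaps.

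\textbf{On $(3)\Rightarrow(4)$.} You flag this as a main obstacle, worrying that a map $C^\dagger(S)\to C(S\setminus F)$ is only coarsely equivariant. In fact the map ``send a curve to its homotopy class rel $F$'' is \emph{genuinely} $f$-equivariant (the action of $f$ on actual curves descends to the action of its rel-$F$ mapping class on homotopy classes), and coarsely Lipschitz by \cite[Lemma~4.2]{BHW}. Hence $d_{C^\dagger(S)}(c,f^n c)$ dominates $d_{C(S\setminus F)}([c],[f]^n[c])$ up to constants, and Masur--Minsky gives the lower bound directly. No separate ``lower bound in $C^\dagger(S)$'' argument is needed; the part of $f$ isotopic to the identity rel $F$ acts wildly on $C^\dagger(S)$ but vanishes after projection, so it cannot cancel anything.

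\textbf{On $(4)\Rightarrow(1)$, hyperplane case.} You assert that the iterates $f^n(c)$ are confined to a subgraph of \emph{bounded diameter}. This is too strong and almost certainly false in general (it would force ellipticity). What the paper proves, and what suffices, is that the \emph{asymptotic} translation length vanishes: one bounds $d_{C^\dagger}(c,f^n(c))$ by $1$ plus the number $\mathcal{C}_c(f^n(c))$ of lifts of $c$ crossed by a lift of $f^n(c)$ in a cyclic cover (Lemma~\ref{Lem:distanceestimates}), and this number grows sublinearly because the transverse rotation set is $\{0\}$. You also do not distinguish rational from irrational hyperplanes; the irrational case requires an additional Dirichlet approximation step to get an arbitrarily small (but possibly nonzero) transverse rotation set, which still kills hyperbolicity.

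\textbf{On $(4)\Rightarrow(1)$, the $E\cup F$ case.} Your sketch (``coarsely $f$-invariant subsurface splitting'', distance $\le 2$ to separating curves) does not match what actually works. The paper first uses the decomposition of $\rho_{erg}(f)$ (Proposition~\ref{Prop:decomposition}) and forcing (\cite{guiheneuf2022homotopic}) to manufacture a specific \emph{separating} simple closed geodesic $\gamma$ compatible with periodic orbits realising vectors in $E$ and in $F$; then passes to a degree-$2$ cover in which a lift $\hat\gamma$ becomes nonseparating; and then shows that the transverse rotation number with respect to $\hat\gamma$ must be $0$ (otherwise Lellouch's theorem produces an ergodic rotation vector pairing nontrivially with both $E$ and $F$, contradicting $\rho_{erg}(f)\subset E\cup F$). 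This reduces the $E\cup F$ case to the hyperplane-type argument. Your ``reducibility excludes loxodromicity'' line is not an argument here, since $f$ need not be reducible in any isotopy sense.

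\textbf{On $(2)\Rightarrow(3)$.} Your dismissal of the finite-order case (``would make $\rho_{erg}(f)$ finite'') is not correct: being periodic \emph{rel $F$} does not make $f$ periodic, and $\rho_{erg}(f)$ is an invariant of $f$, not of its rel-$F$ class. The paper instead proves $(1)\Rightarrow(3)$: it chooses the periodic set $F$ so that the associated rotation vectors are not contained in $H_1(S_1,\R)\cup H_1(S_2,\R)$ for \emph{any} separating splitting $S=S_1\cup S_2$ (this uses precisely the ``not in any $E\cup F$'' hypothesis), cites Pollicott to see that any reducing curve must be separating, and thereby rules out all reducing curves at once. Your induction on complexity may be salvageable, but as written it does not explain why adding more periodic orbits eventually eliminates reducibility.
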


The intersection form $\wedge$ on $H_1(S,\mathbb{R})$ can be obtained from the cup product on $H^1(S,\R) \times H^1(S,\R) \to  H^2(S,\R)$ that induces by Poincaré duality a symplectic form 
\[\wedge : H_1(S,\R) \times H_1(S,\R) \to \R.\]
This intersection form has a geometrical interpretation in the case of elements of $H_1(S,\Z)$: if $\alpha$ and $\beta$ are closed curves, then $[\alpha]_{H_1}\wedge [\beta]_{H_1}$ is the algebraic intersection number between $\alpha$ and $\beta$. For more details about these facts, see \cite[Section~1.1]{lellouch}.

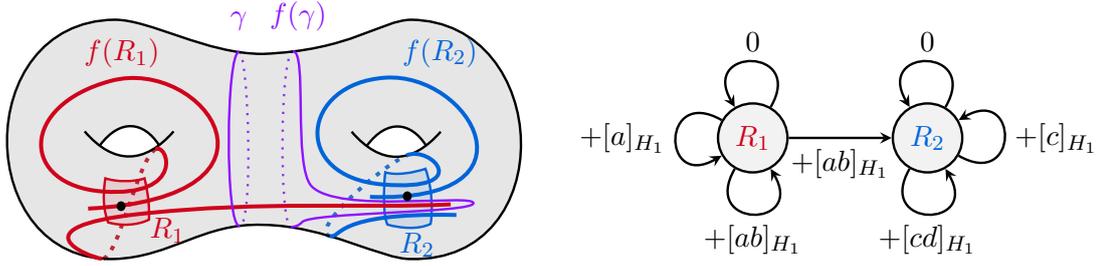
\begin{figure}[bt]
\tikzset{every picture/.style={line width=0.9pt}} %set default line width to 0.75pt        

\begin{tikzpicture}[x=0.75pt,y=0.75pt,yscale=-1,xscale=.95]
%uncomment if require: \path (0,300); %set diagram left start at 0, and has height of 300

%Curve Lines [id:da4743775570790665] 
\draw [color={rgb, 255:red, 144; green, 19; blue, 254 }  ,draw opacity=1 ] [dash pattern={on 0.84pt off 2.51pt}]  (345.12,102.15) .. controls (338.23,103.09) and (337.94,189.51) .. (343.95,190.45) ;
%Curve Lines [id:da2987261252166479] 
\draw [color={rgb, 255:red, 144; green, 19; blue, 254 }  ,draw opacity=1 ] [dash pattern={on 0.84pt off 2.51pt}]  (316.97,102.46) .. controls (321.81,103.24) and (324.6,189.04) .. (317.27,189.82) ;
%Curve Lines [id:da9703246136665704] 
\draw [color={rgb, 255:red, 0; green, 100; blue, 217 }  ,draw opacity=1 ][line width=1.5]  [dash pattern={on 1.69pt off 2.76pt}]  (364.55,195) .. controls (353.83,192.49) and (388.11,162.13) .. (404.87,154.44) ;
%Curve Lines [id:da0039414522784068096] 
\draw [color={rgb, 255:red, 208; green, 2; blue, 27 }  ,draw opacity=1 ][line width=1.5]  [dash pattern={on 1.69pt off 2.76pt}]  (272.91,151.34) .. controls (263.16,157.41) and (253.38,206.81) .. (244.76,206.45) ;
%Curve Lines [id:da44088006892464293] 
\draw [color={rgb, 255:red, 0; green, 0; blue, 0 }  ,draw opacity=1 ][fill={rgb, 255:red, 155; green, 155; blue, 155 }  ,fill opacity=0.25 ]   (329.26,189.39) .. controls (297.77,188.36) and (281.56,205.55) .. (251.31,206.58) .. controls (176.8,208.3) and (175.55,87.25) .. (251.31,86.22) .. controls (270.05,86.03) and (299.6,103.47) .. (329.26,103.42) .. controls (375.71,101.7) and (391.3,86.57) .. (407.2,86.22) .. controls (481.79,87.21) and (486.7,208.3) .. (407.2,206.58) .. controls (391.92,206.93) and (362.62,189.73) .. (329.26,189.39) -- cycle ;
%Curve Lines [id:da6115581739065314] 
\draw [draw opacity=0][fill={rgb, 255:red, 255; green, 255; blue, 255 }  ,fill opacity=1 ]   (243.8,149.34) .. controls (255.86,134.44) and (267.81,138.88) .. (274.84,149.34) .. controls (264.95,156.65) and (253.52,157.22) .. (243.8,149.34) -- cycle ;
%Curve Lines [id:da15293889671356187] 
\draw    (235.73,142.38) .. controls (251.09,159.45) and (267.35,158.96) .. (282.49,142.38) ;
%Curve Lines [id:da7053176928860803] 
\draw    (243.8,149.34) .. controls (254.49,135.7) and (266.82,137.42) .. (274.84,149.34) ;

%Curve Lines [id:da6020812088954934] 
\draw [draw opacity=0][fill={rgb, 255:red, 255; green, 255; blue, 255 }  ,fill opacity=1 ]   (384.1,149.51) .. controls (396.16,134.61) and (408.11,139.05) .. (415.14,149.51) .. controls (405.25,156.82) and (393.82,157.39) .. (384.1,149.51) -- cycle ;
%Curve Lines [id:da9021761015603748] 
\draw    (376.02,142.55) .. controls (391.39,159.62) and (407.65,159.13) .. (422.79,142.55) ;
%Curve Lines [id:da634397748851358] 
\draw    (384.1,149.51) .. controls (394.79,135.87) and (407.12,137.59) .. (415.14,149.51) ;

%Shape: Polygon Curved [id:ds5561661148191708] 
\draw  [color={rgb, 255:red, 208; green, 2; blue, 27 }  ,draw opacity=1 ][fill={rgb, 255:red, 208; green, 2; blue, 27 }  ,fill opacity=0.1 ] (246.61,166.01) .. controls (252.14,167.8) and (261.35,167.98) .. (267.21,166.01) .. controls (269.83,176.75) and (269.59,181.26) .. (269.56,186.77) .. controls (263.19,189.81) and (254.65,189.99) .. (248.07,188.18) .. controls (246.44,184.26) and (244.73,177.25) .. (246.61,166.01) -- cycle ;
%Shape: Polygon Curved [id:ds23686394887662] 
\draw  [color={rgb, 255:red, 0; green, 100; blue, 217 }  ,draw opacity=1 ][fill={rgb, 255:red, 0; green, 100; blue, 217 }  ,fill opacity=0.1 ] (393.7,162.97) .. controls (399.23,164.76) and (408.95,163.15) .. (414.81,161.18) .. controls (418.6,171.17) and (418.6,183.51) .. (417.49,188.92) .. controls (411.13,191.96) and (401.45,191.09) .. (394.88,189.27) .. controls (392.84,186.04) and (392.46,173.31) .. (393.7,162.97) -- cycle ;
%Curve Lines [id:da6424165086782415] 
\draw [color={rgb, 255:red, 208; green, 2; blue, 27 }  ,draw opacity=1 ][line width=1.5]    (237.56,181.22) .. controls (351.99,175.68) and (298.21,104.46) .. (249.46,116.99) .. controls (200.7,129.51) and (201.91,169.76) .. (245.44,172.63) .. controls (288.96,175.51) and (279.44,148.08) .. (272.91,151.34) ;
%Curve Lines [id:da25235148786683936] 
\draw [color={rgb, 255:red, 208; green, 2; blue, 27 }  ,draw opacity=1 ][line width=1.5]    (244.76,206.45) .. controls (237.26,205.53) and (199.53,190.71) .. (264.7,183.91) .. controls (329.87,177.11) and (383.32,181.22) .. (428.05,179.08) ;
%Shape: Ellipse [id:dp43994298746690785] 
\draw  [fill={rgb, 255:red, 0; green, 0; blue, 0 }  ,fill opacity=1 ] (253.14,180.01) .. controls (253.14,179) and (253.91,178.18) .. (254.86,178.18) .. controls (255.81,178.18) and (256.58,179) .. (256.58,180.01) .. controls (256.58,181.03) and (255.81,181.85) .. (254.86,181.85) .. controls (253.91,181.85) and (253.14,181.03) .. (253.14,180.01) -- cycle ;
%Curve Lines [id:da33530045510067685] 
\draw [color={rgb, 255:red, 0; green, 100; blue, 217 }  ,draw opacity=1 ][line width=1.5]    (385.66,175.32) .. controls (478.64,178.54) and (438.1,112.51) .. (393.7,115.38) .. controls (349.31,118.24) and (343.64,165.25) .. (392.28,169.07) .. controls (440.91,172.89) and (417.94,149.25) .. (404.87,154.44) ;
%Curve Lines [id:da30968319084682416] 
\draw [color={rgb, 255:red, 0; green, 100; blue, 217 }  ,draw opacity=1 ][line width=1.5]    (364.55,195) .. controls (370.75,196.61) and (365.73,183.01) .. (431.06,184.44) ;
%Curve Lines [id:da7593685762833998] 
\draw [color={rgb, 255:red, 144; green, 19; blue, 254 }  ,draw opacity=1 ]   (316.97,102.46) .. controls (309.94,101.52) and (309.2,191.39) .. (317.27,189.82) ;
%Curve Lines [id:da996427368588832] 
\draw [color={rgb, 255:red, 144; green, 19; blue, 254 }  ,draw opacity=1 ]   (345.12,102.15) .. controls (353.03,101.37) and +(-10,-10) .. (355,170) .. controls +(14,14) and +(0,-5) .. (440,178.5) .. controls +(0,5) and +(5,-5) .. (350, 188) .. controls +(-2,2) and +(2,0) .. (343.95,190.45) ;
%Shape: Ellipse [id:dp12142593502108401] 
\draw  [fill={rgb, 255:red, 0; green, 0; blue, 0 }  ,fill opacity=1 ] (403.54,175.01) .. controls (403.54,174) and (404.31,173.18) .. (405.26,173.18) .. controls (406.21,173.18) and (406.98,174) .. (406.98,175.01) .. controls (406.98,176.03) and (406.21,176.85) .. (405.26,176.85) .. controls (404.31,176.85) and (403.54,176.03) .. (403.54,175.01) -- cycle ;

% Text Node
\draw (316.73,92.71) node [anchor=south] [inner sep=0.75pt]  [color={rgb, 255:red, 144; green, 19; blue, 254 }  ,opacity=1 ]  {$\gamma $};
% Text Node
\draw (348.04,93.47) node [anchor=south] [inner sep=0.75pt]  [color={rgb, 255:red, 144; green, 19; blue, 254 }  ,opacity=1 ]  {$f( \gamma )$};
% Text Node
\draw (268.4,184.4) node [anchor=north west][inner sep=0.75pt]  [color={rgb, 255:red, 208; green, 2; blue, 27 }  ,opacity=1 ]  {$R_{1}$};
% Text Node
\draw (255.32,111.98) node [anchor=south] [inner sep=0.75pt]  [color={rgb, 255:red, 208; green, 2; blue, 27 }  ,opacity=1 ]  {$f( R_{1})$};
% Text Node
\draw (410.39,191.6) node [anchor=north] [inner sep=0.75pt]  [color={rgb, 255:red, 0; green, 100; blue, 217 }  ,opacity=1 ]  {$R_{2}$};
% Text Node
\draw (401,111.58) node [anchor=south west] [inner sep=0.75pt]  [color={rgb, 255:red, 0; green, 100; blue, 217 }  ,opacity=1 ]  {$f( R_{2})$};

\end{tikzpicture}
\hfill
\begin{tikzpicture}
\node[draw,circle,fill=gray!10] (1) at (0,0) {\color{rgb, 255:red, 208; green, 2; blue, 27 }{$R_1$}};
\node[draw,circle,fill=gray!10] (2) at (2.3,0) {\color{rgb, 255:red, 0; green, 100; blue, 217 }{$R_2$}};

\draw[->, >=stealth] (1) to node[midway, below]{$+[ab]_{H_1}$} (2) ;
\draw[->, >=stealth] (1) to [looseness=5, out= 60, in=120]node[midway, above]{$0$} (1) ;
\draw[->, >=stealth] (1) to [looseness=5, out= 150, in=210]node[midway, left]{$+[a]_{H_1}$} (1) ;
\draw[->, >=stealth] (1) to [looseness=5, out= -120, in=-60]node[midway, below]{$+[ab]_{H_1}$} (1) ;
\draw[->, >=stealth] (2) to [looseness=5, out= 60, in=120]node[midway, above]{$0$} (2) ;
\draw[->, >=stealth] (2) to [looseness=5, out= -30, in=30]node[midway, right]{$+[c]_{H_1}$} (2) ;
\draw[->, >=stealth] (2) to [looseness=5, out= -120, in=-60]node[midway, below]{$+[cd]_{H_1}$} (2) ;

\end{tikzpicture}
\caption{\label{FigRotInterior}Example of a homeomorphism of the surface of genus 2 having homological rotation set with nonempty interior but homological ergodic rotation set included in the union of two planes. The black dots are contractible fixed points, the thick lines represent the images by $f$ of the rectangles $R_1$ and $R_2$; the intersections of all rectangles are supposed to be markovian.
}
\end{figure}

\begin{example}
No similar characterization hold with the more classical homological rotation set (in the sense of Definition~\ref{Def:homologicalrotationset}), as seen in the example of Figure~\ref{FigRotInterior} which is a homeomohphism of the suface of genus 2 having homological rotation set with nonempty interior but homological ergodic rotation set included in the union of two planes. \\
Indeed, the fact that a separating curve $\gamma$ is mapped to a curve disjoint from $f(\gamma)$ implies that the rotation vectors of recurrent points are included in the union of two planes that are the homologies of the surfaces respectively on the left and on the right of $\gamma$.\\
The rotation set of $f$ in the sense of Definition~\ref{Def:homologicalrotationset} contains the rotation set of the labeled Markov shift represented on the right (associated to the different markovian intersections between $R_1$, $R_2$ and their images by $f$). Here, $a$ and $b$ are generators of the fundamental group of the surface on the left of $\gamma$, and $c$ and $d$ are generators of the fundamental group of the surface on the right of $\gamma$.\\
Hence, this homeomorphism has rotation set with nonempty interior but by Theorem~\ref{maintheorem} does not act hyperbolically on $C^{\dagger}(S)$.
\end{example}

\subsection*{Plan of the proof}

The diagram of the proof of implications of theorem~\ref{maintheorem} is the following:

\begin{center}
\begin{tikzpicture}
\node[draw,circle,fill=gray!10] (1) at (0,0) {\textit{1.}};
\node[draw,circle,fill=gray!10] (2) at (-2,0) {\textit{2.}};
\node[draw,circle,fill=gray!10] (3) at (2,0) {\textit{3.}};
\node[draw,circle,fill=gray!10] (4) at (4,0) {\textit{4.}};

\draw[->, >=stealth] (1) to[bend left]node[midway, below]{\S\ref{Sec:Decomposition}} (2) ;
\draw[->, >=stealth] (2) to[bend left]node[midway, below]{\S\ref{Sec:intro}} (1) ;
\draw[->, >=stealth] (1) to node[midway, above]{\S\ref{Sec:1implies2}} (3) ;
\draw[->, >=stealth] (3) to node[midway, above]{\S\ref{Sec:intro}} (4) ;
\draw[->, >=stealth] (4) to[bend left]node[midway, below]{\S\ref{Sec:Last}} (1) ;
\draw[->, >=stealth] (3) to[out=145,in=40] node[midway, above]{\S\ref{Sec:1implies2}} (2) ;

\end{tikzpicture}
\end{center}

The implication \textit{2.} $\implies$ \textit{1.} is trivial.

The implication \textit{1.} $\implies$ \textit{2.} is a consequence of the shape of the ergodic rotation set that we study in Section~\ref{Sec:Decomposition} (Proposition~\ref{Prop:decomposition}). These results are based on a theorem by Lellouch \cite{lellouch} that we state as Theorem~\ref{Lellouch}. The fact that not only $\overline{\rho_{erg}(f)}$ but also $\rho_{erg}(f)$ has nonempty interior is done in Lemma~\ref{LemNonemInte} of Section~\ref{Sec:1implies2}.

%The implication \textit{3.} $\implies$ \textit{2.} follows from the fact that in this case, the invariant subset $F$ is made of periodic orbits such that the complement of the trajectories of these orbits under the isotopy is only made of topological disks.

The implication \textit{1.} $\implies$ \textit{3.} is a consequence of the Nielsen-Thurston classification. The proof is standard and similar to the ones due to Llibre-MacKay \cite{MR1101087} and Pollicott \cite{MR1094554}. We prove it in Section~\ref{Sec:1implies2}, using results of Section~\ref{Sec:Decomposition}.

The implication \textit{3.} $\implies$ \textit{4.} is standard and is a consequence of \cite[Lemma 4.2 ]{BHW} by Bowden, Hensel and Webb. Indeed, by this lemma, the asymptotic translation length of the action of $f$ on the fine curve graph of $S$ is at least the asymptotic translation length of the action of its mapping class on the curve graph of $S \setminus F$. But the latter does not vanish when it is a pseudo-Anosov element by \cite[Proposition 4.6]{zbMATH01355494} by Masur and Minsky.

The most difficult implication is \textit{4.} $\implies$ \textit{1.}. We will actually prove that if \textit{1.} does not hold, then $f$ cannot act hyperbolically on $C^{\dagger}(M)$. The last section of this article (Section~\ref{Sec:Last}) is devoted to the proof of this implication; it uses crucially the results of using results of Section~\ref{Sec:Decomposition}.

The next section (Section~\ref{Sec:homologicalrotation}) is devoted to some standard facts about the homological rotation set.

\subsection*{Acknowledgements}

The second author was supported by the ANR project Gromeov ANR-19-CE40-0007.

\section{Some useful results about the homological rotation set} \label{Sec:homologicalrotation}

Let us start by defining the homological rotation set that we use in this article. The definition we use here is different from the one which is used by Lellouch in \cite{lellouch}. More precisely, the latter rotation set is the convex hull of the rotation set that we define here, which is not necessarily convex. We also prove standard results about this homological rotation set which will be useful later.

We fix an isotopy $(f_t)_{t \in [0,1]}$ with $f_0=\Id_S$ and $f_1=f$. We extend this isotopy to $\mathbb{R}$ by setting, for any $t \in \mathbb{R}$, $f_t=f_{t-\lfloor t \rfloor}\circ f^{\lfloor t \rfloor}$, where $\lfloor t \rfloor$ is the lower integer part of $t$.

We endow the surface $S$ with a hyperbolic metric. For any points $x$ and $y$ of $S$, we fix a minimizing geodesic $g_{x,y}$ joining $x$ to $y$.

For any point $x$ in $S$ and any integer $n \geq 0$, we define a cycle by concatenating the path $(f_t(x))_{t \in [0,n]}$ with the path $g_{f^{n}(x),x}$. We denote by $c_{x,n} \in H_1(S,\mathbb{R})$ the homology class of this cycle.

\begin{definition} \label{Def:homologicalrotationset}
We call \emph{homological rotation set} of $f$ the subset of $H_1(S, \mathbb{R})$ which consists of limit values of sequences of the form $(\frac{1}{n}c_{x_n,n})$, where $(x_n)$ is a sequence of points of $S$.
\end{definition}

This homological rotation set is, by construction, a compact subset of $H_1(S, \mathbb{R})$.

Let $x \in S$. If the sequence $(\frac{1}{n}c_{x,n})_n$ converges, we say that the point $x$ admits a homological rotation vector. In this case, the limit $r(x)$ of the sequence $(\frac{1}{n}c_{x,n})_n$ is called the homological rotation vector of $x$. Observe that any periodic point of $f$ admits a homological rotation vector.

We can also associate rotation vectors to ergodic $f$-invariant measures, by the following lemma.

\begin{lemma} \label{Lem:ergodicrotationvector}
Let $\mu$ be an ergodic $f$-invariant Borel probability measure on $S$. Then there exists $r(\mu) \in \rho_{H_1}(f)$ such that $\mu$-almost any point $x$ of $S$ admits a rotation vector which is equal to $r(\mu)$. We call $r(\mu)$ the homological rotation vector associated to $\mu$.
\end{lemma}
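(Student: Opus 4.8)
The plan is to identify $\frac1n c_{x,n}$, up to an additive error bounded independently of $x$ and $n$, with a Birkhoff average, and then to invoke Birkhoff's ergodic theorem coordinate by coordinate. The one subtlety is that the function being averaged cannot be taken on $S$ itself: closing up each piece $(f_t(x))_{t\in[k,k+1]}$ with a geodesic produces an accumulated correction of order $n$, not $O(1)$. I would therefore pass to the maximal free abelian cover $\wh S\to S$, associated with the kernel of the Hurewicz morphism $\pi_1(S)\to H_1(S,\Z)$; it is a regular cover whose deck group is the lattice $\Lambda:=H_1(S,\Z)\cong\Z^{2g}$. Lifting the isotopy $(f_t)$ to $(\wh f_t)$ with $\wh f_0=\Id$ (and extending it to $\R$ exactly as downstairs) and setting $\wh f=\wh f_1$, uniqueness of lifts of isotopies forces $\wh f$ to commute with every element of $\Lambda$; this is where we use that $f$ is isotopic to the identity.

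Next I would fix a $\Lambda$-equivariant map $h:\wh S\to H_1(S,\R)$, meaning $h(\lambda\cdot\wh y)=h(\wh y)+\lambda$ for $\lambda\in\Lambda$, viewed inside $H_1(S,\R)$; such a map exists (for instance as a lift of an Abel--Jacobi type map $S\to H_1(S,\R)/H_1(S,\Z)$ inducing the identity on $H_1$) and, being equivariant for a cocompact action, can be taken Lipschitz. Define $\phi:S\to H_1(S,\R)$ by $\phi(y)=h(\wh f(\wh y))-h(\wh y)$ for an arbitrary lift $\wh y$ of $y$; since $\wh f$ commutes with $\Lambda$ and $h$ is equivariant, $\phi$ is well defined (independent of the chosen lift) and continuous, hence bounded on the compact surface $S$. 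The key estimate is obtained by lifting the loop $\ell_{x,n}$ defining $c_{x,n}$ — namely $(f_t(x))_{t\in[0,n]}$ followed by $g_{f^n(x),x}$ — starting at a lift $\wh x$ of $x$: the isotopy part lifts to $t\mapsto\wh f_t(\wh x)$, which ends at $\wh f^n(\wh x)$, while the geodesic part lifts to a path of length at most $\diam(S)$. As $c_{x,n}$ is the class of a genuine loop it lies in $H_1(S,\Z)$, and by covering space theory the lift of $\ell_{x,n}$ ends at $c_{x,n}\cdot\wh x$, so equivariance and the Lipschitz bound on $h$ give
\[\Bigl|\,c_{x,n}-\bigl(h(\wh f^n(\wh x))-h(\wh x)\bigr)\Bigr|\le\operatorname{Lip}(h)\cdot\diam(S).\]
Telescoping in the cover yields $h(\wh f^n(\wh x))-h(\wh x)=\sum_{k=0}^{n-1}\phi(f^k(x))$, so $c_{x,n}$ and $\sum_{k=0}^{n-1}\phi(f^k(x))$ differ by at most $\operatorname{Lip}(h)\cdot\diam(S)$ for all $x$ and $n$.

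To finish, I would apply Birkhoff's ergodic theorem to each of the $2g$ coordinates of the bounded (hence $L^1(\mu)$) function $\phi$: by ergodicity of $\mu$, for $\mu$-almost every $x$ the averages $\frac1n\sum_{k=0}^{n-1}\phi(f^k(x))$ converge to $r(\mu):=\int_S\phi\ud\mu\in H_1(S,\R)$. Combined with the previous estimate, $\frac1n c_{x,n}\to r(\mu)$ for $\mu$-almost every $x$, which is exactly the assertion that $\mu$-almost every point admits a homological rotation vector equal to $r(\mu)$; picking one such $x$ and the constant sequence $x_n=x$ in Definition~\ref{Def:homologicalrotationset} then shows $r(\mu)\in\rho_{H_1}(f)$. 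The main obstacle is the bounded-error identification above, and in particular the realization that it must be carried out in the homology cover so that the error telescopes; the auxiliary points (well-definedness and boundedness of $\phi$, the covering-space computation of the endpoint of the lifted loop, and the uniform control of the lifted isotopy paths via uniform continuity of $(t,y)\mapsto f_t(y)$) are routine.
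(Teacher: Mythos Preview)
Your argument is correct, but it is more elaborate than necessary, and your diagnosis of the ``one subtlety'' is off. You assert that the function being averaged cannot be taken on $S$ itself, because closing up each piece $(f_t(x))_{t\in[k,k+1]}$ with a geodesic would accumulate an error of order $n$. The paper sidesteps this entirely by pairing with cohomology rather than trying to produce a homology-valued cocycle directly: for each closed $1$-form $\omega$ on $S$, the function $x\mapsto\int_{I(x)}\omega$ (where $I(x)$ is the path $(f_t(x))_{t\in[0,1]}$) is a bounded Borel function on $S$, and additivity of the line integral along the concatenated path $(f_t(x))_{t\in[0,n]}$ gives $\int_{c_{x,n}}\omega=\sum_{k=0}^{n-1}\int_{I(f^k(x))}\omega+\int_{g_{f^n(x),x}}\omega$, with the last term bounded by $\diam(S)\cdot\|\omega\|_\infty$. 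No closing of individual pieces, and no passage to a cover, is needed; Birkhoff applied to these scalar functions for $\omega$ ranging over a basis of $H^1(S,\R)$ finishes immediately, via the duality isomorphism $H_1(S,\R)\cong (H^1(S,\R))^*$.

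Your route via the maximal abelian cover and an equivariant developing map $h$ is really the same argument in disguise: your $\phi$ is, coordinate by coordinate, exactly $x\mapsto\int_{I(x)}\omega_i$ for a basis $(\omega_i)$ of closed $1$-forms whose primitives on $\wh S$ are the components of $h$. What your packaging buys is a clean geometric picture of $c_{x,n}$ as a deck translation; what the paper's packaging buys is brevity and the avoidance of constructing $\wh S$ and $h$.
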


We denote by $\rho_{erg}(f)$ the subset of $\rho_{H_1}(f)$ consisting of rotation vectors associated to ergodic measures.

\begin{proof} For any point $x \in S$, we denote by $I(x)$ the path $(f_t(x))_{t \in [0,1]}$. We see the cohomology group $H^1(S,\mathbb{R})$ as the quotient of the space of smooth closed $1$-forms on $S$ by the space of exact $1$-forms on $S$. The map $d$ from $H_1(S,\mathbb{R})$ to the algebraic dual  $\left(H^1(S,\mathbb{R}) \right)^*$, which, to an element $\gamma \in H_1(S,\mathbb{R})$, associates the linear form $\omega \mapsto \int_\gamma \omega$, is an isomorphism.

Hence it suffices to prove that for $\mu$-almost every point $x$ of $S$ and for any $\omega \in H^{1}(S,\mathbb{R})$,
$$ \lim_{n \rightarrow +\infty} \frac{1}{n} \int_{c(x,n)} \omega = \int_S \int_{I(y)}\omega \ud\mu(y).$$
Observe that the right-hand side of the above equality vanishes if $\omega$ is exact as the measure $\mu$ is $f$-invariant.

But this is a straightforward consequence of  Birkhoff ergodic theorem applied to the bounded  functions $x \mapsto \int_{I(x)}\omega$ and of the fact that 
$$\lim_{n \rightarrow +\infty} \frac{1}{n} \int_{g_{f^{n}(x),x}} \omega=0.$$
This last assertion holds as the length of $g_{f^{n}(x),x}$ is bounded from above by the diameter of $S$.
\end{proof}

\begin{lemma} \label{Lem:extremalpoints}
Any extremal point in $\mathrm{conv}(\rho_{H_1}(f))$ is a homological rotation vector associated to some ergodic $f$-invariant Borel probability measure on $S$.
\end{lemma}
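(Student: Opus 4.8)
The plan is to combine Milman's partial converse to the Krein--Milman theorem with a standard weak-$*$ compactness argument on invariant measures, and then to pass from an invariant measure to an ergodic one using the ergodic decomposition together with the extremality of $v$.

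First I would note that $\rho_{H_1}(f)$ is compact (as recorded after Definition~\ref{Def:homologicalrotationset}), so $\conv(\rho_{H_1}(f))$ is a compact convex subset of the finite-dimensional vector space $H_1(S,\mathbb{R})$; by Milman's theorem (or directly from Carath\'eodory's theorem together with the definition of an extremal point), every extremal point $v$ of $\conv(\rho_{H_1}(f))$ already belongs to $\rho_{H_1}(f)$. Hence there is a sequence $(x_n)$ of points of $S$ with $\frac1n c_{x_n,n}\to v$. I then introduce the empirical measures $\mu_n=\frac1n\sum_{k=0}^{n-1}\delta_{f^k(x_n)}$; by weak-$*$ compactness of the space of Borel probability measures on the compact surface $S$, a subsequence converges to an $f$-invariant probability measure $\mu$. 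To see that the rotation vector of $\mu$ equals $v$, I use the description of the rotation vector obtained in the proof of Lemma~\ref{Lem:ergodicrotationvector} (which I extend verbatim to an arbitrary invariant measure): for a smooth closed $1$-form $\omega$ on $S$, the function $y\mapsto\int_{I(y)}\omega$ is continuous and bounded, so $\int_S\int_{I(y)}\omega\ud\mu_n(y)\to\int_S\int_{I(y)}\omega\ud\mu(y)$, while on the other hand $\int_S\int_{I(y)}\omega\ud\mu_n(y)=\frac1n\int_{(f_t(x_n))_{t\in[0,n]}}\omega$ differs from $\frac1n\int_{c_{x_n,n}}\omega$ by $\frac1n\int_{g_{f^n(x_n),x_n}}\omega\to 0$ (using $\len(g_{f^n(x_n),x_n})\le\diam(S)$). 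Passing to the limit and using the isomorphism $d\colon H_1(S,\mathbb{R})\to (H^1(S,\mathbb{R}))^*$, this gives $r(\mu)=v$.

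It remains to replace $\mu$ by an ergodic measure with the same rotation vector. I would consider the set $\mathcal{M}_v$ of all $f$-invariant Borel probability measures $\nu$ with $r(\nu)=v$. The map $\nu\mapsto r(\nu)$ is affine and weak-$*$ continuous (again because $y\mapsto\int_{I(y)}\omega$ is continuous and bounded), and $f$-invariance is a weak-$*$ closed condition, so $\mathcal{M}_v$ is a nonempty (it contains $\mu$), convex, weak-$*$ compact set; by Krein--Milman it has an extremal point $\mu_0$. I claim $\mu_0$ is ergodic. If it were not, we could write $\mu_0=t\nu_1+(1-t)\nu_2$ with $t\in(0,1)$ and $\nu_1\neq\nu_2$ both $f$-invariant. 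Applying the ergodic decomposition to $\nu_1$ and $\nu_2$ and using Lemma~\ref{Lem:ergodicrotationvector}, each $r(\nu_i)$ is the barycenter of a probability measure supported on $\rho_{H_1}(f)\subset\conv(\rho_{H_1}(f))$, hence $r(\nu_i)\in\conv(\rho_{H_1}(f))$; moreover $v=r(\mu_0)=t\,r(\nu_1)+(1-t)\,r(\nu_2)$. Since $v$ is an extremal point of $\conv(\rho_{H_1}(f))$, this forces $r(\nu_1)=r(\nu_2)=v$, i.e.\ $\nu_1,\nu_2\in\mathcal{M}_v$, contradicting the extremality of $\mu_0$ in $\mathcal{M}_v$. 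Therefore $\mu_0$ is ergodic with $r(\mu_0)=v$, which is the claim.

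All the individual steps are routine; the points that need some care are the verification that the rotation-vector map $\nu\mapsto r(\nu)$ is well defined, affine and weak-$*$ continuous on the whole space of invariant measures (so that $\mathcal{M}_v$ is a compact convex set and so that the ergodic decomposition can be integrated against it), and the bookkeeping ensuring that a possibly non-ergodic invariant measure still has its rotation vector inside $\conv(\rho_{H_1}(f))$ --- this is exactly what lets the extremality of $v$ collapse the convex combination. I do not expect any serious obstacle beyond this.
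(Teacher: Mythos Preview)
Your proof is correct and follows essentially the same route as the paper: pass from an extremal point to an invariant measure via empirical measures and weak-$*$ compactness, then take an extremal point of the fiber $\mathcal{M}_v=\{\nu\in\mathcal{M}(f):r(\nu)=v\}$ and use extremality of $v$ in $\conv(\rho_{H_1}(f))$ to force it to be extremal in $\mathcal{M}(f)$, hence ergodic. The only difference is organizational: the paper first isolates the identity $\conv(\rho_{H_1}(f))=\conv(\rho_{erg}(f))=\rho_{mes}(f)$ as a separate claim (which it reuses later), so that $v\in\rho_{mes}(f)$ and $r(\nu_i)\in\rho_{mes}(f)$ are immediate, whereas you bypass $\rho_{mes}$ by invoking Milman's theorem and the ergodic decomposition to place $r(\nu_i)$ inside $\conv(\rho_{H_1}(f))$; both arguments are equivalent and equally short.
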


\begin{proof}
We will use once again the isomorphism
$$\begin{array}{rrcl}
d: & H_1(S,\mathbb{R}) & \longrightarrow & \left( H^1(S,\mathbb{R}) \right)^* \\
 & \gamma & \longmapsto &  \left(\omega \mapsto \int_\gamma \omega\right).
 \end{array}$$
Moreover, we saw during the proof of Lemma \ref{Lem:ergodicrotationvector} that, for any ergodic invariant probability measure $\mu$,
\begin{equation}\label{EqDefr}
r(\mu)= d^{-1} \left(\omega \mapsto \int_S \int_{I(x)} \omega \ud\mu(x) \right).
\end{equation}
Let us denote by $\mathcal{M}(f)$ the set of $f$-invariant Borel probability measures on $S$.

Let 
$$ \rho_{mes}(f)=d^{-1}\left( \left\{ \omega \mapsto \int_S \int_{I(x)}\omega \ud\mu(x),\  \mu \in \mathcal{M}(f) \right\} \right).$$
This subset of $H_1(S,\mathbb{R})$ is the rotation set which is used in Lellouch's thesis \cite{lellouch}.

To prove the lemma, we need the following claim.

\begin{claim} \label{Cla:2definitions}
$$\mathrm{conv}(\rho_{H_1}(f))=\mathrm{conv}(\rho_{erg}(f))=\rho_{mes}(f).$$
\end{claim}

\begin{proof}[Proof of Claim \ref{Cla:2definitions}]
The right-hand side equality is a consequence of the fact that the set $\mathcal{M}(f)$ is the convex hull of the set of ergodic $f$-invariant probability measures on $S$, and the linearity of the map $d$ used to define $\rho_{mes}(f)$ (see \eqref{EqDefr}).

As, for any ergodic measure $\mu$, the vector $d^{-1} \left(\omega \mapsto \int_S \int_{I(x)}\omega \ud\mu(x) \right)$ belongs to $\rho_{erg}(f) \subset \rho_{H_1}(f)$ and as the set $\mathcal{M}(f)$ is the convex hull of ergodic measures, we obtain that
$$\rho_{mes}(f) \subset \mathrm{conv}(\rho_{H_1}(f)).$$

Conversely, fix $r \in \rho_{H_1}(f)$ and take a sequence of integers $n_k \rightarrow +\infty$ and a sequence $(x_k)$ of points of $S$ such that
$$ \lim_{k \rightarrow +\infty} \frac{1}{n_k} c_{x_k,n_k}=r.$$
Extracting a subsequence if necessary, we can suppose that the sequence $$(\mu_k)_k=\left( \frac{1}{n_k} \sum_{l=0}^{n_k-1} \delta_{f^{l}(x_k)} \right)_k$$ converges to a $f$-invariant probability measure $\mu$ for the weak-$*$ topology. Then, for any closed $1$-form $\omega$,
\begin{align*}
d(r)(\omega) & = \lim_{k \rightarrow +\infty} \frac{1}{n_k}\int_{c_{x_k,n_k}}\omega \\
 & = \lim_{k \rightarrow +\infty} \int_M \int_{I(x)} \omega \ud\mu_k \\
 & = \int_S \int_{I(x)} \omega \ud\mu(x).
\end{align*}
This proves that $r\in\rho_{mes}(f)$.
\end{proof}

We now finish the proof of Lemma \ref{Lem:extremalpoints}. 

Take any extremal point $r \in \mathrm{conv}(\rho_{H_1}(f))=\rho_{mes}(f)$. Observe that the subset
$$\left\{ \mu \in \mathcal{M}(f) \ | \ \omega \mapsto \int_M \int_{I(x)} \omega \ud\mu(x)=d(r) \right\}$$
of $\mathcal{M}(f)$ is convex. Hence any extremal point of this subset is an extremal point of $\mathcal{M}(f)$. Any such extremal point is an ergodic probability measure $\mu$ with $r(\mu)=r$.
\end{proof}

The following theorem is a straightforward consequence of Theorem C p.~16 of \cite{lellouch}. We denote by $\wedge$ the intersection form on $H_1(S, \mathbb{R})$.

\begin{theorem}[Lellouch] \label{Lellouch}
Let $r_1$ and $r_2$ be vectors in $\rho_{erg}(f)$. Suppose that $r_1 \wedge r_2 \neq 0$. Then, for any $r \in \mathrm{conv}\left\{0,r_1,r_2 \right\}$ and any $\epsilon >0$, there exists $r' \in \rho_{H_1}(f) \cap B(r,\epsilon)$  that is associated to a periodic point.  
\end{theorem}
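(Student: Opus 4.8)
The plan is to deduce the statement directly from Theorem~C p.~16 of \cite{lellouch}, the only real work being the dictionary between the rotation set $\rho_{mes}(f)$ used by Lellouch and the set $\rho_{H_1}(f)$ of the present paper. Recall that Lellouch's Theorem~C asserts that if $f$ is isotopic to the identity and $r_1,r_2$ are rotation vectors of ergodic $f$-invariant probability measures with $r_1\wedge r_2\neq 0$, then every point of $\mathrm{conv}\{0,r_1,r_2\}$ that lies in $H_1(S,\mathbb{Q})$ and in the interior of this triangle is the rotation vector of a periodic point of $f$.

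First I would check that the hypotheses transfer. By formula~\eqref{EqDefr}, the rotation vector $r(\mu)$ attached in Section~\ref{Sec:homologicalrotation} to an ergodic measure $\mu$ coincides, for the fixed choices of isotopy and of minimizing geodesics $g_{x,y}$, with the rotation vector used by Lellouch; hence $r_1,r_2\in\rho_{erg}(f)$ are rotation vectors of ergodic measures in his sense, and by hypothesis $r_1\wedge r_2\neq0$, so Theorem~C applies. Likewise, a periodic point $x$ of period $q$ carries the ergodic measure $\frac1q\sum_{i=0}^{q-1}\delta_{f^i(x)}$, and its rotation vector in either sense equals $\frac1q c_{x,q}$; in particular the rotation vector of a periodic point belongs to $\rho_{H_1}(f)$ by Definition~\ref{Def:homologicalrotationset}.

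Since $r_1\wedge r_2\neq 0$, the vectors $r_1,r_2$ are linearly independent, so $\mathrm{conv}\{0,r_1,r_2\}$ is a non-degenerate triangle; its interior is open and nonempty, it equals the closure of that interior, and $H_1(S,\mathbb{Q})$ is dense in $H_1(S,\mathbb{R})$, so the points of $\mathrm{conv}\{0,r_1,r_2\}\cap H_1(S,\mathbb{Q})$ lying in the interior are dense in the whole triangle. Now, given $r\in\mathrm{conv}\{0,r_1,r_2\}$ and $\epsilon>0$, choose such a rational interior point $r'$ with $\|r-r'\|<\epsilon$; by Theorem~C, $r'$ is the rotation vector of a periodic point, hence $r'\in\rho_{H_1}(f)\cap B(r,\epsilon)$ and $r'$ is associated to a periodic point, as required. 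The one step that truly needs care in a detailed proof is the comparison of conventions in the second paragraph — normalization, orientation, and the fact that Lellouch's convexified rotation set is $\rho_{mes}(f)=\mathrm{conv}(\rho_{H_1}(f))$ — which is precisely the content of Claim~\ref{Cla:2definitions} and equation~\eqref{EqDefr}; once that identification is granted, the argument above is immediate.
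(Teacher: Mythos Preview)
Your proposal is correct and matches the paper's approach exactly: the paper does not give a proof but simply states that the result is ``a straightforward consequence of Theorem~C p.~16 of \cite{lellouch}'', and you have supplied precisely the natural details of that deduction --- the identification of rotation vectors via \eqref{EqDefr} and Claim~\ref{Cla:2definitions}, and the density of rational interior points in the non-degenerate triangle $\mathrm{conv}\{0,r_1,r_2\}$.
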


\section{A decomposition of the rotation set} \label{Sec:Decomposition}

\begin{definition}
We define on $\rho_{erg}(f) \setminus \left\{ 0 \right\}$ the equivalence relation $\sim$ by:
$v \sim w$ if and only if there exists a sequence $(v_j)_{1 \leq j \leq m}\in\rho_{erg}(f)\setminus\{0\}$ such that $v_1=v$, $v_n=w$ and, for any $j <m$, either $v_j$ and $v_{j+1}$ are collinear or $v_j \wedge v_{j+1} \neq 0$.
\end{definition}

The fact that this is an equivalence relation is straightforward.

Observe that two distinct classes for this equivalence relation are orthogonal for the intersection form $\wedge$.

Taking the vector spaces generated by each of those equivalence classes, we obtain the following decomposition.

\begin{proposition} \label{Prop:decomposition}
Let $f\in \Homeo_0(S)$. 
There exist pairwise orthogonal vector subspaces $V_1,V_2, \ldots, V_n$ of $H_1(S,\mathbb{R})$, with $n \le g$, as well as a subset $\mathcal{L}$ of $H_1(S,\mathbb{R})$ such that:
\begin{enumerate}
\item $\displaystyle \rho_{erg}(f) \subset \mathcal{L} \cup \bigcup_{i=1}^n V_i.$
\item For any $1 \leq i \leq n$, $V_i$ is a rational subspace of $H_1(S,\mathbb{R})$ whose dimension is at least $2$.
\item The set $\mathcal{L}$ is a union of pairwise orthogonal lines containing $0$ which are all orthogonal to any $V_i$.
\item For any $i$, the set $\overline{V_i\cap \rho_{erg}(f)}$ is a convex set containing $0$, with nonempty interior in $V_i$, and with a dense subset of elements realized by periodic orbits.
\item For any $1 \leq i \leq n$ and any two vectors $v$, $w$ in $\rho_{erg}(f) \cap V_i \setminus \left\{ 0 \right\}$, $v \sim w$.
\end{enumerate} 
\end{proposition}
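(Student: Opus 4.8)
The plan is to read the decomposition directly off the equivalence classes of $\sim$. Write $\rho_{erg}(f)\setminus\{0\}=\bigsqcup_\alpha C_\alpha$ for these classes, and recall the observation made just before the statement: if $\alpha\neq\beta$ then $v\wedge w=0$ for all $v\in C_\alpha$, $w\in C_\beta$, so the spans $\Span(C_\alpha)$ are pairwise orthogonal for $\wedge$. Call a class $C_\alpha$ \emph{thick} when $\dim\Span(C_\alpha)\geq 2$; let $V_1,\dots,V_n$ be the spans of the thick classes, and let $\mathcal L$ be the union of $\{0\}$ with the spans of all the remaining classes (each of which is a line through $0$). With these definitions conclusion~(1) is immediate (a nonzero element of $\rho_{erg}(f)$ lies in the span of its class, hence in some $V_i$ or in $\mathcal L$, and $0\in\mathcal L$) and conclusion~(3) follows from orthogonality of distinct classes. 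The combinatorial lemma I would isolate first is a \emph{transverse-partner principle}: in a thick class $C$, every $c\in C$ admits $u\in C$ with $c\wedge u\neq 0$. Indeed, choose $c'\in C$ not collinear with $c$, follow a $\sim$-chain $c=v_1,\dots,v_m=c'$, and let $j$ be the first index with $v_j\notin\R c$; then $v_{j-1}\in\R c\setminus\{0\}$, the step $v_{j-1}\to v_j$ cannot be a collinearity step (else $v_j\in\R v_{j-1}=\R c$), hence $v_{j-1}\wedge v_j\neq 0$, so $c\wedge v_j\neq 0$. In particular no element of a thick class lies in the radical of the restriction of $\wedge$ to the span of that class.

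Next I would bring in Lellouch's Theorem~\ref{Lellouch}. Fix a thick class $C_i$ and a transverse pair $c,u\in C_i$. Theorem~\ref{Lellouch} produces, densely in the solid triangle $\conv\{0,c,u\}$, rotation vectors realized by periodic orbits; such a vector is rational (a period-$q$ orbit has rotation vector in $\tfrac1q H_1(S,\Z)$), is associated to an ergodic measure, and lies in $\Span\{c,u\}\subseteq V_i$. From this: the plane $\Span\{c,u\}$, containing a dense set of rational points in a two-dimensional subset, is rational; since every element of $C_i$ belongs to such a plane, $V_i$ is a sum of rational planes, hence a rational subspace of dimension $\geq 2$, which is conclusion~(2); and $\conv\{0,c,u\}$ is contained in $K_i:=\overline{\rho_{erg}(f)\cap V_i}$, inside which the periodic rotation vectors are dense. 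For conclusion~(4) I would then show $K_i$ is convex: given $r_1,r_2\in K_i$, approximate each by a periodic vector $p_1$, $p_2$ lying in a triangle $\conv\{0,c,u\}\subseteq K_i$ as above; perturb $p_1,p_2$ inside these two-dimensional triangles so that $p_1\wedge p_2\neq 0$ (possible unless the two triangles span $\wedge$-orthogonal planes, a case one treats separately); apply Theorem~\ref{Lellouch} to $(p_1,p_2)$ to get $\conv\{0,p_1,p_2\}\subseteq K_i$; and let $p_j\to r_j$, using that $K_i$ is closed. So $K_i$ is convex, contains $0$, and, containing triangles whose vertices span $V_i$, has nonempty interior in $V_i$.

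The bound $n\leq g$ is linear algebra: choosing a transverse pair $(a_i,b_i)$ inside each $V_i$, the Gram matrix of $a_1,b_1,\dots,a_n,b_n$ for $\wedge$ is block diagonal (distinct $V_i$ are orthogonal) with invertible $2\times 2$ blocks, so these $2n$ vectors are linearly independent in $H_1(S,\R)$, and $2n\leq 2g$. There remains conclusion~(5): it suffices to show that any $v\in\rho_{erg}(f)\cap V_i\setminus\{0\}$ is $\sim$-equivalent to the elements of $C_i$. Since $v\in V_i=\Span(C_i)$, either $v$ is collinear with some $c\in C_i$, or $v\wedge c\neq 0$ for some $c\in C_i$ (in either case $v\sim c$), or else $v$ is $\wedge$-orthogonal to all of $V_i$, i.e.\ a nonzero element of the radical of $\wedge|_{V_i}$. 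I would exclude this last possibility using the transverse-partner principle together with the fullness of $K_i$ in $V_i$: a radical vector cannot belong to $C_i$ (it would have no transverse partner) nor to another thick class $C_j$ (being in $V_i\perp V_j$ it would also be radical for $\wedge|_{V_j}$, which is excluded for $C_j$), and the only surviving case, that $v$ generates a line-class whose line lies inside $V_i$, is ruled out because $\rho_{erg}(f)$ occupies an open subset of $V_i$, again via Theorem~\ref{Lellouch}.

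The step I expect to be the main obstacle is precisely this last one: showing that $\wedge$ restricts to a nondegenerate form on each $V_i$ (equivalently, that no nonzero radical direction of $V_i$ carries a rotation vector), which is also what makes the convexity argument of the previous paragraph go through. Both require extracting the right two-dimensional information from Theorem~\ref{Lellouch}; by contrast conclusions~(1), (3), the rationality in~(2), and the dimension count are routine once the transverse-partner principle is in hand.
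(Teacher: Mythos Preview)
Your overall strategy matches the paper's: define the $V_i$ and $\mathcal L$ from the thick and thin $\sim$-classes, and use Lellouch's theorem for the finer properties. The transverse-partner principle is correct and is exactly what the paper uses (implicitly) to show the $V_i$ are in direct sum and $n\le g$.

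There are, however, two genuine gaps.

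\textbf{Point 2 (rationality).} Theorem~\ref{Lellouch} only produces periodic rotation vectors in a ball $B(r,\epsilon)$ of the ambient $H_1(S,\R)$, not in the plane $\Span\{c,u\}$; so your assertion that ``such a vector \dots\ lies in $\Span\{c,u\}$'' is unjustified, and with it the rationality of the plane. The paper's fix is to take a basis $(r(\mu_\ell))$ of $V_i$ from $C_i$, approximate each by a periodic $r(\nu_\ell)$, and---this is the point---choose $\epsilon$ small enough that $r(\nu_\ell)$ still wedges nontrivially with a neighbouring element of a $\sim$-chain through $\mu_\ell$. That forces $r(\nu_\ell)\in C_i\subseteq V_i$, and one obtains a rational basis of $V_i$.

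\textbf{Point 4 (convexity).} The case you defer (``the two triangles span $\wedge$-orthogonal planes, a case one treats separately'') is not a side case but the heart of the matter: when $\dim V_i\ge 4$, two elements $r(\mu_1),r(\mu_2)\in C_i$ may satisfy $r(\mu_1)\wedge r(\mu_2)=0$, and no perturbation inside planar triangles containing them will change that. The paper's key idea is a chain argument. Take a $\sim$-chain $\mu^1=\mu_1,\dots,\mu^m=\mu_2$ with consecutive nonzero wedges, and inductively build periodic measures $\nu^2,\dots,\nu^m$ with $r(\nu^j)\in B(r(\mu_1),\epsilon)$ and $r(\nu^j)\wedge r(\mu^j)\neq 0$: at each step apply Lellouch to the pair $(\nu^{j-1},\mu^{j-1})$ near the vertex $r(\nu^{j-1})$, shrinking the error so the next wedge survives. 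At the end $r(\nu^m)$ is $\epsilon$-close to $r(\mu_1)$ with $r(\nu^m)\wedge r(\mu_2)\neq 0$, and one last application of Lellouch fills $\conv\{0,r(\nu^m),r(\mu_2)\}$. Letting $\epsilon\to 0$ gives $\conv\{0,r(\mu_1),r(\mu_2)\}\subseteq K_i$.

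Finally, the obstacle you flag---nondegeneracy of $\wedge|_{V_i}$---is neither proved nor used in the paper's proof of this proposition. The chain argument above handles convexity without it, and Point~5 is recorded simply as a consequence of $V_i=\Span(C_i)$. Nondegeneracy is established only later (Lemma~\ref{LemNotContainedHyp}), under the extra hypothesis that $\rho_{erg}(f)$ is not contained in a hyperplane. So you are aiming at the wrong target: the missing ingredient is not a structural fact about $\wedge|_{V_i}$ but the inductive walk along the $\sim$-chain.
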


Figure~\ref{FigExuncount} displays an example where the ergodic rotation set is reduced to the set $\mathcal L$ but is uncountable and dense in a 2-dimensional convex subset of $H_1(S,\R)$. 

\begin{figure}
\begin{center}
\tikzset{every picture/.style={line width=1.2pt}} %set default line width to 0.75pt        

\begin{tikzpicture}[x=0.75pt,y=0.75pt,yscale=-1.6,xscale=1.8]
%uncomment if require: \path (0,300); %set diagram left start at 0, and has height of 300

%Curve Lines [id:da24855742963766203] 
\draw [color={rgb, 255:red, 0; green, 0; blue, 0 }  ,draw opacity=1 ][fill={rgb, 255:red, 155; green, 155; blue, 155 }  ,fill opacity=0.25 ]   (300,150) .. controls (279.8,149.4) and (269.4,159.4) .. (250,160) .. controls (202.2,161) and (201.4,90.6) .. (250,90) .. controls (262.02,89.89) and (280.97,100.03) .. (300,100) .. controls (329.8,99) and (339.8,90.2) .. (350,90) .. controls (397.85,90.57) and (401,161) .. (350,160) .. controls (340.2,160.2) and (321.4,150.2) .. (300,150) -- cycle ;
%Curve Lines [id:da7057695282723481] 
\draw [draw opacity=0][fill={rgb, 255:red, 255; green, 255; blue, 255 }  ,fill opacity=1 ]   (245.18,126.71) .. controls (252.92,118.04) and (260.58,120.63) .. (265.09,126.71) .. controls (258.75,130.96) and (251.42,131.29) .. (245.18,126.71) -- cycle ;
%Curve Lines [id:da8287331487467742] 
\draw    (240,122.66) .. controls (249.86,132.59) and (260.29,132.3) .. (270,122.66) ;
%Curve Lines [id:da3576685490564242] 
\draw    (245.18,126.71) .. controls (252.04,118.78) and (259.95,119.78) .. (265.09,126.71) ;

%Curve Lines [id:da0951518198637572] 
\draw [draw opacity=0][fill={rgb, 255:red, 255; green, 255; blue, 255 }  ,fill opacity=1 ]   (335.18,126.81) .. controls (342.92,118.14) and (350.58,120.73) .. (355.09,126.81) .. controls (348.75,131.06) and (341.42,131.39) .. (335.18,126.81) -- cycle ;
%Curve Lines [id:da3015612733847003] 
\draw    (330,122.76) .. controls (339.86,132.69) and (350.29,132.4) .. (360,122.76) ;
%Curve Lines [id:da22524917104682585] 
\draw    (335.18,126.81) .. controls (342.04,118.88) and (349.95,119.88) .. (355.09,126.81) ;

%Shape: Polygon Curved [id:ds6275044776384383] 
\draw  [color={rgb, 255:red, 220; green, 155; blue, 170 }  ,draw opacity=0.5 ][line width=10]  (230.2,125.3) .. controls (230,148.9) and (269.4,154.5) .. (300,125) .. controls (330.6,95.5) and (370,100.4) .. (370,125) .. controls (370,149.6) and (329,154.1) .. (300,125) .. controls (271,95.9) and (230.4,101.7) .. (230.2,125.3) -- cycle ;
%Shape: Polygon Curved [id:ds15452904252697253] 
\draw  [color={rgb, 255:red, 208; green, 2; blue, 27 }  ,draw opacity=1 ] (230.2,125.3) .. controls (230,148.9) and (269.4,154.5) .. (300,125) .. controls (330.6,95.5) and (370,100.4) .. (370,125) .. controls (370,149.6) and (329,154.1) .. (300,125) .. controls (271,95.9) and (230.4,101.7) .. (230.2,125.3) -- cycle ;
%Curve Lines [id:da26670931649518326] 
\draw [color={rgb, 255:red, 208; green, 2; blue, 27 }  ,draw opacity=1 ]   (233.92,114.29) .. controls (231.04,118.35) and (231.34,118.67) .. (230.82,121.4) ;
\draw [shift={(230.13,124.31)}, rotate = 285.78] [fill={rgb, 255:red, 208; green, 2; blue, 27 }  ,fill opacity=1 ][line width=0.08]  [draw opacity=0] (8.04,-3.86) -- (0,0) -- (8.04,3.86) -- (5.34,0) -- cycle    ;
%Curve Lines [id:da5194627047804808] 
\draw [color={rgb, 255:red, 208; green, 2; blue, 27 }  ,draw opacity=1 ]   (367.67,115.54) .. controls (369.15,118.26) and (369.29,119.67) .. (369.58,122.04) ;
\draw [shift={(370,125)}, rotate = 260.19] [fill={rgb, 255:red, 208; green, 2; blue, 27 }  ,fill opacity=1 ][line width=0.08]  [draw opacity=0] (8.04,-3.86) -- (0,0) -- (8.04,3.86) -- (5.34,0) -- cycle    ;
%Curve Lines [id:da7824359551391162] 
\draw [color={rgb, 255:red, 74; green, 144; blue, 226 }  ,draw opacity=1 ]   (263.27,133.93) .. controls (281.06,127.92) and (284.23,115.27) .. (263.69,115.44) ;
\draw [shift={(261,115.53)}, rotate = 356.75] [fill={rgb, 255:red, 74; green, 144; blue, 226 }  ,fill opacity=1 ][line width=0.08]  [draw opacity=0] (8.04,-3.86) -- (0,0) -- (8.04,3.86) -- (5.34,0) -- cycle    ;
%Curve Lines [id:da48976537589461155] 
\draw [color={rgb, 255:red, 74; green, 144; blue, 226 }  ,draw opacity=1 ]   (337.67,133.4) .. controls (323.66,129.96) and (316.34,117.77) .. (336.15,115.52) ;
\draw [shift={(339.13,115.27)}, rotate = 176.75] [fill={rgb, 255:red, 74; green, 144; blue, 226 }  ,fill opacity=1 ][line width=0.08]  [draw opacity=0] (8.04,-3.86) -- (0,0) -- (8.04,3.86) -- (5.34,0) -- cycle    ;

% Text Node
\draw (279,122) node [anchor=west] [inner sep=0.75pt]  [color={rgb, 255:red, 74; green, 144; blue, 226 }  ,opacity=1 ]  {$a$};
% Text Node
\draw (322,122) node [anchor=east] [inner sep=0.75pt]  [color={rgb, 255:red, 74; green, 144; blue, 226 }  ,opacity=1 ]  {$c$};

\end{tikzpicture}

\caption{\label{FigExuncount}In this example, the homeomorphism $f$ is the time one of a time dependent vector field which is identically zero outside the light red neighbourhood of the red closed curve. The red closed curve represents the trajectory of a periodic orbit of $f$ under the isotopy $I$. One easily sees that the homological rotation set of $f$ must be included in $\langle [a]_{H_1}, [c]_{H_1}\rangle$, which is a totally isotropic subspace of $H_1(S,\R)$ for $\wedge$. However, by \cite[Theorem E]{guiheneuf2022homotopic}, the closure $\overline{\rho_{erg}(f)}$ of the ergodic rotation set has nonempty interior in this subspace. This gives an example where $\rho_{erg}(f) = \mathcal L$ but is uncontable.}
\end{center}
\end{figure}
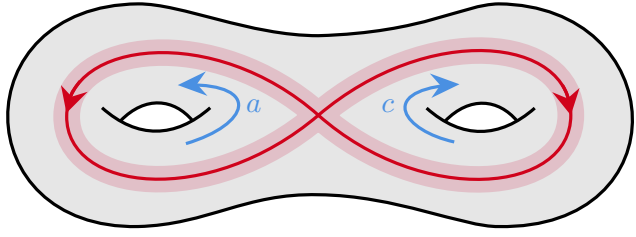

Note that a more precise decomposition of the ergodic homological rotation set, for transforming nonempty interior pieces in the set $\mathcal L$ into classes that are alike to the $V_i$ appearing in proposition~\ref{Prop:decomposition}, is the subject of a work in progress by Garc\'ia, Guihéneuf and Lessa \cite{GGL}.

\begin{proof}
Denote $(C_i)_{i\in I}$ the classes for the relation $\sim$ which generate a vector space with dimension at least $2$, and define $V_i=\operatorname{Span}(C_i)$. Define $\mathcal{L}$ as the union of the lines generated by the other classes.

Let us prove that that the family $(V_i)_{i \in I}$ is in direct sum. 
For $i\in I$, consider $v_i\in V_i$ and $\lambda_i\in \R$, and suppose that $\sum_{i\in J} \lambda_i v_i=0$, with $J$ a finite subset of $I$.
For some $j\in J$, by the definition of the $V_j$ there exists $w_j \in V_j$ such that $v_j \wedge w_j \neq 0$. As the spaces $V_i$ are pairwise orthogonal, we get that
\[0 = \left(\sum_{i\in J} \lambda_i v_i\right)\wedge w_j = \sum_{i\in J} \lambda_i (v_i\wedge w_j) = \lambda_j (v_j\wedge w_j),\]
and as $v_j\wedge w_j\neq 0$, this shows that $\lambda_j=0$. Hence, the family $(V_i)_{i \in I}$ is in direct sum, and the cardinality of $I$ can be at most $g$.
We denote this family of vector spaces by $(V_i)_{1 \leq i \leq n}$, with $n \leq g$.
\medskip

Point 1. of the proposition is immediate.
\medskip

Let us prove point 2. Take $1 \leq i \leq n$. Let $\mu_1,\dots,\mu_k\in \rho_{erg}(f)\setminus \left\{ 0 \right\}$ such that $(r(\mu_\ell))_{1\le\ell\le k}$ form a basis of the vector subspace $V_i$ associated to the equivalence class $C_i$. Then by definition of the equivalence relation $\sim$, for any $1\le \ell<k$, there exists $(\mu_\ell^j)_{1\le j\le m_\ell}\in \M_{erg}(f)$ such that $\mu_\ell^1 = \mu_\ell$, $\mu_\ell^{m_\ell} = \mu_{\ell+1}$ and $r(\mu_\ell^j)\wedge r(\mu_ \ell^{j+1})\neq 0$ for any $1\le j< m_\ell$. 

By Theorem \ref{Lellouch}, for any $\varep>0$, for any $1\le \ell\le k$, there exists a periodic measure $\nu_\ell$ such that $r(\nu_\ell)\in\rho_{H_1}(f)\cap B(r(\mu_\ell),\varep)$. By taking $\varep$ sufficiently small, one can moreover suppose that for $1\le \ell< k$ one has $r(\nu_\ell)\wedge r(\mu_\ell^2)\neq 0$, and $r(\nu_k)\wedge r(\mu_{k-1}^{m_{k-1}-1})\neq 0$. This implies that for any $1\le \ell\le k$, one has $r(\nu_\ell)\in C_i$. Moreover, by taking $\varep$ small enough, one can ensure that the family $(r(\nu_\ell))_{1\le\ell\le k}$ is free. Hence this family is free and has the same cardinality as a basis of $V_i$, so it is a basis of $V_i$, made of rational vectors.
\medskip

Point 3. follows from the construction of $\mathcal{L}$.
\medskip

 To prove point 4. it suffices to prove that, given $r(\mu_1),r(\mu_2)\in C_i$, then the whole triangle spanned by $r(\mu_1),r(\mu_2)$ and $0$ is accumulated by points of $C_i$ which are realized by periodic orbits. By definition, there exists $(\mu^j)_{1\le j\le m}\in \M_{erg}(f)$ such that $\mu^1 = \mu_1$, $\mu^{m} = \mu_2$ and $r(\mu^j)\wedge r(\mu^{j+1})\neq 0$ for any $1\le j< m$. Up to shortening the family if necessary, one can suppose that $r(\mu^j)\wedge r(\mu^{j+2})= 0$ for any $1\le j< m-1$.
Let $\varep>0$, and let us build a sequence of periodic measures $(\nu^j)_{2\le j\le m}$ by recurrence, satisfying:
\begin{itemize}
\item For any $2\le j \le m$, $r(\nu^j)\in B(r(\mu_1),\varep)$;
\item For any $2\le j \le m$, $r(\nu^j)\wedge r(\mu^j)\neq 0$.
\end{itemize}
Let us explain the first step of the construction, the other ones being identical.
By Theorem \ref{Lellouch}, as $r(\mu^2)\wedge r(\mu^1) \neq 0$, for any $0<\varep'<\varep/2$, the ball $B(\varep/2 r(\mu^2)+(1-\varep/2)r(\mu^1),\varep')$ contains a periodic measure $\nu^1$. If $\varep'$ is chosen small enough, then $r(\nu^1)\wedge r(\mu^3)$ is close enough to $(\varep/2 r(\mu^2)+(1-\varep/2)r(\mu^1))\wedge r(\mu^3) = \varep/2 r(\mu^2)\wedge r(\mu^3)\neq 0$, and in particular $r(\nu^1)\wedge r(\mu^3)\neq 0$. 

Finally, the measure $\nu^m$ satisfies $r(\nu^m)\in B(r(\mu_1),\varep)$ and $r(\nu^m)\wedge r(\mu_2)\neq 0$. It then suffices to apply Theorem \ref{Lellouch} to $\nu^m$ and $\mu_2$ to approximate any element of the triangle spanned by $r(\mu_1),r(\mu_2)$ and 0 by the rotation vector of a periodic measure, with distance proportional to $\varep$ (with the proportion coefficient depending on the norm of $r(\mu_1)$ and $r(\mu_2)$).
\medskip

Point 5. is a direct consequence of the definition of the vector subspaces $V_i$.
\end{proof}

Recall that the intersection form on $H_1(S, \mathbb{R})$ defines a symplectic form on $H_1(S, \mathbb{R})$. Recall also that a symplectic vector subspace of $H_1(S, \mathbb{R})$ is a subspace on which the restricted intersection form is a symplectic form. We will need the following lemma.

\begin{lemma}\label{LemNotContainedHyp}
Suppose that $\rho_{erg}(f)$ is not contained in a hyperplane. Then $\mathcal{L}= \emptyset$ and each of the subspaces $V_i$ which appear in the decomposition given by Proposition \ref{Prop:decomposition} is a symplectic subspace of $H_1(S,\mathbb{R})$.  
\end{lemma}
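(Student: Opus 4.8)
The plan is to deduce everything from the nondegeneracy of the intersection form $\wedge$ on $H_1(S,\mathbb{R})$, combined with the structural properties listed in Proposition~\ref{Prop:decomposition}. First I would record the elementary observation that the hypothesis ``$\rho_{erg}(f)$ is not contained in a hyperplane'' is equivalent to saying that $\rho_{erg}(f)$ spans $H_1(S,\mathbb{R})$. Since by point~1 of Proposition~\ref{Prop:decomposition} we have $\rho_{erg}(f)\subset \mathcal{L}\cup\bigcup_{i=1}^n V_i$, it follows that
\[H_1(S,\mathbb{R}) = \operatorname{Span}(\mathcal{L}) + \sum_{i=1}^n V_i.\]

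Next I would prove that $\mathcal{L}=\emptyset$. Suppose not, and pick a nonzero vector $v$ generating one of the lines forming $\mathcal{L}$. By point~3 of Proposition~\ref{Prop:decomposition}, $v$ is orthogonal (for $\wedge$) to every $V_i$ and to every other line of $\mathcal{L}$; moreover $v\wedge v=0$ by antisymmetry, so $v$ is orthogonal to $\operatorname{Span}(\mathcal{L})$ as well. Combining this with the displayed decomposition of $H_1(S,\mathbb{R})$ shows that $v$ lies in the kernel of $\wedge$, which is trivial since $\wedge$ is nondegenerate. This contradiction gives $\mathcal{L}=\emptyset$.

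Finally, once $\mathcal{L}=\emptyset$, point~1 of Proposition~\ref{Prop:decomposition} gives $\rho_{erg}(f)\subset\bigcup_{i=1}^n V_i$, hence $\sum_i V_i = H_1(S,\mathbb{R})$; together with the fact (proved in Proposition~\ref{Prop:decomposition}) that the $V_i$ are in direct sum and pairwise orthogonal, we get an orthogonal direct sum decomposition $H_1(S,\mathbb{R})=V_1\oplus\cdots\oplus V_n$. To see that each $V_i$ is symplectic, fix $i$ and let $v\in V_i$ be in the kernel of $\wedge|_{V_i}$. For $w\in V_j$ with $j\neq i$ we have $v\wedge w=0$ by orthogonality of $V_i$ and $V_j$, and for $w\in V_i$ we have $v\wedge w=0$ by assumption; since the $V_j$ span $H_1(S,\mathbb{R})$ this forces $v\wedge w=0$ for all $w\in H_1(S,\mathbb{R})$, so $v=0$ by nondegeneracy of $\wedge$. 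Thus $\wedge$ restricts to a nondegenerate (automatically antisymmetric) form on $V_i$, i.e.\ $V_i$ is a symplectic subspace.

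I do not anticipate a serious obstacle here: the argument is purely linear-algebraic and uses only nondegeneracy of $\wedge$ plus the orthogonality, direct-sum and spanning properties already established. The only point requiring a little care is the observation that a line is orthogonal to itself (because $v\wedge v=0$), which is what makes the $\mathcal{L}=\emptyset$ step go through even though the lines of $\mathcal{L}$ may individually be isotropic.
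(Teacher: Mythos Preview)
Your proof is correct. For $\mathcal{L}=\emptyset$ your argument is essentially the paper's (phrased via the kernel of $\wedge$ rather than via the hyperplane $v^\perp$), but for the symplectic part your route is genuinely different and more economical.

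The paper proves that each $V_{i_0}$ is symplectic by an explicit inductive construction of a symplectic basis $(e_i)$: at each step one picks a vector outside the span of the previous ones, subtracts off suitable combinations to make it orthogonal to them, and then finds a partner for it inside $V_{i_0}$ (the existence of such a partner being forced by the hypothesis that $\rho_{erg}(f)$ is not in a hyperplane). You instead observe directly that once $\mathcal{L}=\emptyset$, the pairwise $\wedge$-orthogonal subspaces $V_1,\dots,V_n$ span $H_1(S,\mathbb{R})$, and then the nondegeneracy of $\wedge|_{V_i}$ is immediate from the nondegeneracy of $\wedge$ on the whole space. This is the natural linear-algebra argument and is all that the lemma as stated requires. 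What the paper's longer argument buys is an explicit symplectic basis of each $V_i$; this extra output is exploited later in a Remark (combined with the periodic-orbit claim in the third case of Section~\ref{Sec:Last}) to obtain an \emph{integral} symplectic basis of each $V_i$ and hence subsurfaces $S_i$ with $H_1(S_i,\mathbb{R})=V_i$. Your argument, being nonconstructive, does not give this for free, but it proves the lemma itself more cleanly.
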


As a consequence, we deduce that each subspace $V_i$ is even-dimensional.

\begin{proof}
Observe that the set $\mathcal{L}$ has to be empty. Otherwise, the set $\rho_{erg}(f)$ would be contained in a hyperplane which consists of vectors which are orthogonal for the intersection form to one of the lines of $\mathcal{L}$. 

Fix a subspace $V_{i_0}$ given by Proposition \ref{Prop:decomposition}. We prove the following statement by induction on $n \geq 1$.
If $\mathrm{dim}(V_{i_0}) \geq 2n-1$, then there exists a family $(e_{i})_{1\le i \leq 2n}$ of independent vectors of $V_{i_0}$ such that
\begin{enumerate}
\item For any $1 \leq i \leq n$, $e_{2i-1} \wedge e_{2i}=1$ and $e_{2i} \wedge e_{2i+1}=0$.
\item For any $1 \leq i,j \leq 2n$, if $|i-j| \geq 2$, then $e_i \wedge e_j=0$.
\end{enumerate}
Suppose $n=1$. As the dimension of $V_{i_0}$ is at least $2$, we can find a nonzero vector $e_{1} \in C_{i_0}$. By definition of $C_{i_0}$ and as $V_{i_0}$ is at least $2$ dimensional by definition, there exists $e'_2 \in V_{i_0}$ such that $e_1 \wedge e'_2 \neq 0$. Changing $e'_2$ into an appropriate collinear vector, we can find $e_2$ such that $e_1 \wedge e_2=1$. This last relation implies that the vectors $e_1$ and $e_2$ are independent.

Suppose that $\mathrm{dim}(V_{i_0}) \geq 2n+1$ and that there exists a family $(e_{i})_{1\le i \leq 2n}$ of independent vectors of $V_{i_0}$ which satisfy the above conditions. By hypothesis on the dimension of $V_{i_0}$, we can find a vector $v$ of $V_{i_0}$ which does not belong to the subspace generated by $(e_i)_{i \leq 2n}$. Now let
$$ e_{2n+1}= v + \sum_{i=1}^{n} (v \wedge e_{2i-1}) e_{2i} - \sum_{i=1}^{n} (v \wedge e_{2i}) e_{2i-1}.$$
By construction of $e_{2n+1}$, we have $e_{2n+1} \neq 0$ and $e_{2n+1}$ is orthogonal to the space generated by the $e_i$'s, for $i\leq 2n$.
Now, we claim that there exists a vector $w \in V_{i_0}$ such that $e_{2n+1} \wedge w \neq 0$. Indeed, otherwise, $V_{i_{0}}$ would be contained in the orthogonal of $e_{2n+1}$ and hence, by Proposition \ref{Prop:decomposition}, $\rho_{erg}(f)$ would be contained in a hyperplane, a contradiction.

By the same trick used to construct $e_{2n+1}$, we can suppose that $w$ is orthogonal to all the vectors $e_{i}$, with $i \leq 2n$. Finally, we can find a vector $e_{2n+2}$ which is collinear to $w$ such that $e_{2n+1} \wedge e_{2n+2} =1$. The vectors $e_i$, for $1 \leq i \leq 2n+2$, have to be independent. 
\end{proof}

\section{Big rotation set homeomorphisms are pseudo-Anosov} \label{Sec:1implies2}

Suppose that case 1. of Theorem~\ref{maintheorem} holds.
By Proposition \ref{Prop:decomposition}, the set $\rho_{erg}(f)$ has nonempty interior in $H_1(S,\mathbb{R})$, and a dense subset of $\mathrm{Conv} \left(\rho_{erg}(f) \right)$ belongs to $\rho_{erg}(f)$ and is realized by a periodic orbit. We shall prove that in this case, the homeomorphism $f$ is isotopic to a pseudo-Anosov homeomorphism relative to a finite set. We will obtain it by classical arguments of \cite{MR1101087} and \cite{MR1094554}.

Let us first recall some definitions of Nielsen-Thurston theory.

Assume that $h\in\Homeo(\Sigma)$ is a homeomorphism of a compact surface $\Sigma$, possibly with boundary or punctures. We call $h$ \emph{periodic} if there exists $n > 0$ such that $h^n = \Id_\Sigma$. We call $h$ \emph{pseudo-Anosov} if there exist $h$-invariant measurable foliations with associated uniformly expanding transverse measures (see \cite{zbMATH04103989} for details). These two types of homeomorphism are distinct, and in particular periodic homeomorphisms have zero topological entropy while pseudo-Anosov homeomorphisms have nonzero topological entropy. Given a finite $h$-invariant set $F$ (in other words, $F$ is a finite union of $h$-periodic orbits), we call $h$ \emph{pseudo-Anosov relative to $F$} if $h|_{\Sigma\setminus F}$ is pseudo-Anosov.

The key theorem of Nielsen-Thurston theory is the following:

\begin{theorem}[Nielsen-Thurston classification]\label{TheoNTClass}
Every homeomorphism $f\in\Homeo(\Sigma)$ is isotopic to a homeomorphism $h\in\Homeo(\Sigma)$ such that:
\begin{enumerate}[label=(\roman*)]
\item $h$ leaves invariant a finite family (possibly empty) of disjoint simple closed curves $C_1, \dots, C_n$ on $\Sigma$;
\item No curve $C_i$ is homotopic to a boundary curve of $S$;
\item We can decompose $\Sigma = \bigcup_{j=1}^d \Sigma_j$, where $\Sigma_1,\dots, \Sigma_d$ are closed surfaces with disjoint interiors obtained by cutting the surface $S$ along the curves $\{C_1, \dots , C_n\}$;
\item For each $1 \le j \le  d$, the homeomorphism $h|_{\Sigma_j}$ is either periodic
or pseudo-Anosov.
\end{enumerate}
\end{theorem}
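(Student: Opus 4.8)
The plan is to cite Theorem~\ref{TheoNTClass} rather than reprove it: it is the classical Nielsen--Thurston classification of surface homeomorphisms, due to Thurston, and a complete proof is available in \cite{zbMATH04103989} (see also Farb and Margalit, \emph{A primer on mapping class groups}, Theorem~13.2, and Casson--Bleiler). For completeness I will indicate how such a proof runs.

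I would argue by induction on the complexity $\xi(\Sigma) = 3g-3+b$ of $\Sigma$ (genus $g$, with $b$ boundary components or punctures), the low-complexity cases being handled directly. In general I would split into two cases according to whether the isotopy class of $f$ is \emph{reducible}, i.e.\ fixes the isotopy class of some essential non-peripheral multicurve, or not. In the reducible case I would take the \emph{canonical reduction system} of $f$: a canonically defined, $f$-invariant multicurve, which is automatically finite because the number of pairwise disjoint, pairwise non-isotopic, essential simple closed curves on $\Sigma$ is bounded by $\xi(\Sigma)$. After isotoping $f$ so that it maps this multicurve to itself, I would apply the induction hypothesis to the first return map of $f$ on each component of the cut surface and reassemble, obtaining (i)--(iii) together with a representative that is periodic or pseudo-Anosov on each piece of the decomposition in (iii)--(iv); the requirement that no curve be homotopic to a boundary curve is part of the normalization of the reduction system.

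The core of the argument is the irreducible case, where I must show $f$ is isotopic to a homeomorphism that is globally periodic or globally pseudo-Anosov. Following Thurston, I would let the induced mapping class act on the compactified Teichmüller space $\overline{\mathcal T(\Sigma)} = \mathcal T(\Sigma) \cup \mathcal{PMF}(\Sigma)$, a closed ball, and use Brouwer's fixed point theorem to get a fixed point. A fixed point in the interior means $f$ preserves a hyperbolic structure up to isotopy, hence is isotopic to an isometry of it and so to a periodic homeomorphism. A fixed point on the boundary is a projective measured foliation $[\mathcal F]$ with $f_*[\mathcal F] = [\mathcal F]$; irreducibility forces $\mathcal F$ to be filling, and analysing the local dynamics of the mapping class on $\mathcal{PMF}(\Sigma)$ near $[\mathcal F]$ produces a transverse fixed foliation together with a dilatation $\lambda > 1$, i.e.\ a pseudo-Anosov structure, the isotopy class being pinned down by the Alexander method. (Bers' proof via Teichmüller geodesics and extremal quasiconformal maps is an alternative route.)

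The hard part will be exactly this last step: in the irreducible infinite-order case, constructing the pair of transverse invariant measured foliations carrying a genuine expansion constant $\lambda > 1$. This is the technical heart of Thurston's theory of measured foliations and train tracks, and is the reason one cites the theorem rather than proving it in a paper like this one. By comparison, the bookkeeping ensuring that the multicurve in (i) is finite and non-peripheral, and that the complementary pieces glue back consistently, is routine once the canonical reduction system is in hand.
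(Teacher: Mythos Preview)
Your proposal is correct and matches the paper's own treatment: the paper does not prove Theorem~\ref{TheoNTClass} at all, but simply states it as the classical Nielsen--Thurston classification (referring to \cite{zbMATH04103989} for the underlying definitions) and then uses it as a black box. Your decision to cite rather than reprove is exactly what the paper does; the sketch you add is a bonus and is accurate, though not needed for the paper's purposes.
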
 

In \cite{MR1101087}, Llibre and MacKay prove that if $S=\T^2$ and if $f\in \Homeo_0(S)$ has a rotation set with nonempty interior, then $f$ is isotopic to a pseudo-Anosov relative to a finite set. 

In \cite{MR1094554}, Pollicott shows the following:

\begin{theorem}[\cite{MR1094554}, Theorem 2]
Let $S$ be a compact closed surface of genus $g \ge 2$, and $f\in \Homeo_0(S)$. Assume that there exist $2g+ 1$ periodic points $x_1,\dots,x_{2g+1}$
whose rotation vectors $\rho_1,\dots,\rho_{2g+1}\in H_1(S,\R)$ do not lie on a hyperplane, then $f$ is isotopic to some $h\in \Homeo_0(S)$ having an invariant set $S_1\subset S$ that is a closed surface with boundary, such that $h|_{S_1}$ is pseudo-Anosov relative to a finite set. 
\end{theorem}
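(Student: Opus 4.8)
The plan is to run the Nielsen--Thurston classification on $f$ with its periodic orbits marked --- following the scheme of Llibre--MacKay \cite{MR1101087} --- and to use the hypothesis on the rotation vectors precisely to exclude the case where every piece of the resulting decomposition is periodic.

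First I would let $p_i$ denote the period of $x_i$ and set $F=\bigcup_{i=1}^{2g+1}\{x_i,f(x_i),\dots,f^{p_i-1}(x_i)\}$, a finite $f$-invariant subset of $S$. Applying Theorem~\ref{TheoNTClass} to $f$ viewed as a homeomorphism of the punctured surface $S\setminus F$ produces a homeomorphism $h$ isotopic to $f$ in $\Homeo(S)$ --- so $h\in\Homeo_0(S)$ --- together with an $h$-invariant multicurve $\mathcal C=\{C_1,\dots,C_n\}$ of pairwise disjoint, essential, non-peripheral simple closed curves in $S\setminus F$ cutting $S$ into subsurfaces $\Sigma_1,\dots,\Sigma_d$ that $h$ permutes and on each of which $h$ is periodic or pseudo-Anosov. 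Since $g\ge 2$, the group $\pi_1(\Homeo_0(S))$ is trivial, so the homological rotation vector of a periodic point is independent of the isotopy to the identity used to define it and is multiplicative under iteration; as the isotopy from $f$ to $h$ is relative to $F$ and $F$ is a union of periodic orbits, each $x_i$ remains a periodic point of $h$ of period $p_i$, and its rotation vector as such is still $\rho_i$ (the relative isotopy only inserts constant sub-arcs into the orbit loops).

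The heart of the argument is to show that some $\Sigma_j$ is pseudo-Anosov. Suppose not, so every $\Sigma_j$ is periodic, and let $N$ be a common multiple of the periods of the pieces and of the $p_i$. Then $h^N$ is isotopic, relative to $F$, to a multitwist $\prod_{i=1}^n T_{C_i}^{m_i}$ along $\mathcal C$ --- the standard normal form of a reducible mapping class all of whose pieces are periodic --- and, since $\mathcal C\subset S\setminus F$, the twisting annuli may be taken disjoint from $F$. Concatenating the ``turn on the twists'' isotopy with this relative isotopy produces an isotopy from $\Id_S$ to $h^N$ that fixes $F$ pointwise throughout; hence the homological rotation vector of each $x_i$, viewed as a fixed point of $h^N$, vanishes. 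By multiplicativity this rotation vector also equals $N\rho_i$, so $\rho_1=\dots=\rho_{2g+1}=0$, and these vectors certainly lie on a hyperplane of $H_1(S,\R)$ --- contradicting the hypothesis. (The same contradiction can be reached in the more classical way, without passing to a power: the homological rotation vectors of the periodic orbits of $f$ are then confined to the span $W=\Span\{[C_1]_{H_1},\dots,[C_n]_{H_1}\}$, which is isotropic for the intersection form since the $C_i$ are pairwise disjoint, hence $\dim W\le g\le 2g-1$ and $W$ lies in a hyperplane.) Therefore some $\Sigma_{j_0}$ is pseudo-Anosov.

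Finally, let $S_1$ be the union of the pieces in the $h$-orbit of $\Sigma_{j_0}$: it is an $h$-invariant compact subsurface of $S$ with boundary a union of curves of $\mathcal C$, on which $h$ cyclically permutes finitely many pieces with pseudo-Anosov first-return map, so $h|_{S_1}$ is pseudo-Anosov relative to the finite set $F\cap S_1$ --- which is the desired conclusion. I expect the real obstacle to be the middle step: the homological rotation vector of a periodic point is only canonical once a lift to the universal cover (equivalently, for $g\ge 2$, an isotopy to the identity) is fixed, so one must check carefully that the isotopies relating $f$, $h$, $h^N$ and the multitwist can all be chosen relative to $F$ --- so that they do not ``wind'' the surface around the marked points --- which is what makes the equalities $\rho_h(x_i)=\rho_i$ and $\rho_{h^N}(x_i)=N\rho_i$ (together with the vanishing on the multitwist) valid, and hence what lets the hypothesis on the $2g+1$ rotation vectors produce a contradiction.
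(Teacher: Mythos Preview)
The paper does not give its own proof of this statement --- it is quoted from Pollicott \cite{MR1094554} and used as a black box in the proof of the implication $1.\Rightarrow 3.$ So there is nothing in the paper to compare against; what you have written is essentially a reconstruction of the Llibre--MacKay/Pollicott argument, and the overall strategy (mark the periodic orbits as $F$, apply Nielsen--Thurston on $S\setminus F$, show some piece must be pseudo-Anosov) is the correct and standard one.

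The contradiction step, however, has a genuine gap. You assert that the ``turn on the twists'' isotopy from $\Id_S$ to the multitwist $M=\prod T_{C_i}^{m_i}$ fixes $F$ pointwise because the twist annuli are disjoint from $F$. This is false whenever $M$ is nontrivial: a Dehn twist along an essential curve $C$ is \emph{not} isotopic to the identity through homeomorphisms supported near $C$ (otherwise it would be trivial in $\mathrm{Map}(S)$). For $M$ to lie in $\Homeo_0(S)$, either some $C_i$ bound disks in $S$ containing points of $F$, or pairs of $C_i$ isotopic in $S$ but not in $S\setminus F$ carry cancelling exponents --- and in either case any isotopy from $\Id$ to $M$ must move points of $F$. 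You flag exactly this as the ``real obstacle'' in your last paragraph, but the body of the argument does not overcome it.

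The repair is immediate with the version of Nielsen--Thurston stated in the paper (Theorem~\ref{TheoNTClass}): there $h|_{\Sigma_j}$ is \emph{literally} periodic, so for suitable $N$ one has $h^N=\Id_S$ exactly, not merely up to a multitwist; your own observation that $\pi_1(\Homeo_0(S))=0$ for $g\ge 2$ then gives $N\rho_i=0$ for all $i$. Your parenthetical alternative (the $\rho_i$ lie in the isotropic subspace $\Span\{[C_i]_{H_1}\}$) is the right statement and is closer to Pollicott's original line, but in your proposal it is only asserted, not proved.
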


Hence, the only thing we have to prove is that in our case (\emph{i.e.} under the additional assumption that $\rho_{erg}(f)$ has nonempty interior in $H_1(S,\mathbb{R})$) we have $S_1=S$. 

\begin{proof}
Let $f\in \Homeo_0(S)$ be such that case 1. of Theorem~\ref{maintheorem} holds, \emph{i.e.} that the following does not hold: either $\rho_{erg}(f)$ is contained in an hyperplane of $H_1(S,\mathbb{R})$ or there exists two nonempty rational subspaces $E$ and $F$ of $H_1(S,\mathbb{R})$, which are orthogonal for the intersection form, and such that $\rho_{erg}(f) \subset E \cup F$. 

By Proposition \ref{Prop:decomposition}, the set $\rho_{erg}(f)$ has nonempty interior in $H_1(S,\mathbb{R})$, and a dense subset of $\mathrm{Conv} \left(\rho_{erg}(f) \right)$ belongs to $\rho_{erg}(f)$ and is realized by a periodic orbit. 

Let us build $x_1,\dots,x_k$ a finite set of $f$-periodic orbits, whose associated rotation vectors are $\rho_1,\dots,\rho_k\in H_1(S,\R)$, and suppose that these vectors span $H_1(S,\mathbb{R})$, and suppose that these vectors are not contained in any union $E\cup F$ of two linear subspaces $E,F\subset H_1(S,\mathbb{R})$ of positive codimension. To see that such a family does exist, consider a family $\tilde \rho_0=0, \tilde\rho_1,\dots,\tilde \rho_{2g}$ a family of $\rho_{erg}(f)$ that is a basis of $H_1(S,\mathbb{R})$ (it exists by hypothesis made on $f$). Now, for any partition $\mathcal P = \mathcal P_1\sqcup\mathcal P_2$ of $\{0,\dots,2g\}$, consider $\rho_{\mathcal P}\in \rho_{erg}(f)\setminus\big(\Span\{\tilde\rho_i\mid i\in\mathcal P_1\}\cup \Span\{\tilde\rho_j\mid j\in\mathcal P_2\}\big)$, that exists by the hypothesis made on $f$. Finally, consider a family $x_1,\dots,x_k$ of $f$-periodic orbits, such that the associated rotation vectors $\rho_1,\dots,\rho_k\in H_1(S,\R)$ are close to the vectors $\tilde \rho_0, \tilde\rho_1,\dots,\tilde \rho_{2g}, (\tilde\rho_{\mathcal P})_{\mathcal P \text{ partition}}$. If these two family are sufficiently close then $\rho_1,\dots,\rho_k$ span $H_1(S,\mathbb{R})$, and are not contained in any union $E\cup F$ of two linear subspaces $E,F\subset H_1(S,\mathbb{R})$ of positive codimension.\footnote{Of course there should be some much more economical way (in terms of the number $k$ of orbits) to build the family $x_1,\dots,x_k$, but we do not need any estimation on this number $k$ for our purpose.}

Now, set $F:=\bigcup_{n\in\N}\bigcup_{j=1}^k f^n(x_j)$ and $\Sigma := S\setminus F$. As the points $x_j$ are periodic, the set $F$ is finite. Apply Nielsen-Thurston classification (Theorem~\ref{TheoNTClass}) to $h:=f|_\Sigma$, and suppose there is at least one simple closed curve $C$ appearing in the decomposition \textit{(i)}. In the proof of \cite[Theorem 2]{MR1094554}, Pollicott shows that this implies that $C$ must be a separating curve; we can write $S = S_1\cup S_2$, with $S_1$ and $S_2$ being two surfaces with disjoint interiors and boundaries equal to $C$. But in this case, this implies that $\rho_{erg}(h)\subset H_1(S_1,\R)\cup H_1(S_2,\R)$ (in this equality, $h$ is seen as a homeomorphism of $S$, easily obtained by extending $h$ to $F$ by $f|_F$), which is impossible because there is some vector $\rho_i \in \rho_{erg}(h)$ with $\rho_i\notin H_1(S_1,\R)\cup H_1(S_2,\R)$. This shows that the set of curves appearing in Theorem~\ref{TheoNTClass} is empty, hence that $f$ is isotopic to a pseudo-Anosov relative to $F$. 
\end{proof}

Applying the arguments of \cite{zbMATH00009916} (that are based on Nielsen-Thurston theory) in the higher genus case, we get the following.

\begin{lemma}\label{LemNonemInte}
If $f\in\Homeo_0(S)$ is pseudo-Anosov relative to a finite subset of $S$, then any element of $\inte(\conv(\rho_{erg}(f)))$ is realized as the rotation vector of an ergodic measure. In particular, $\rho_{erg}(f)$ has nonempty interior.
\end{lemma}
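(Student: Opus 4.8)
The plan is to pass to a symbolic model. Since $f|_{S\setminus F}$ is pseudo-Anosov, the Fathi--Shub construction provides a Markov partition, hence a topologically mixing subshift of finite type $(\Sigma,\sigma)$ together with a boundedly finite-to-one continuous surjection $\pi\colon\Sigma\to S$ satisfying $\pi\circ\sigma=f\circ\pi$. On $\Sigma$ one builds a locally constant \emph{displacement cocycle} $\Psi\colon\Sigma\to H_1(S,\R)$, obtained from the homology classes of the arcs $I(\pi(\xi))$ capped off inside the rectangles of the partition, so that $\bigl\|\sum_{k=0}^{n-1}\Psi(\sigma^k\xi)-c_{\pi(\xi),n}\bigr\|$ is bounded uniformly in $\xi$ and $n$, and exactly telescopes to $c_{\pi(\xi),n}$ along periodic orbits. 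Then $\pi$ pushes $\sigma$-invariant probability measures to $f$-invariant ones while preserving rotation vectors, i.e. $r(\pi_*\nu)=\int_\Sigma\Psi\,\ud\nu$ for $\nu\in\M(\sigma)$, the set of $\sigma$-invariant probability measures; in particular $\int_\Sigma\Psi\,\ud\nu\in\rho_{erg}(f)$ whenever $\nu$ is ergodic, so $R:=\{\int_\Sigma\Psi\,\ud\nu:\nu\in\M(\sigma)\}\subset\conv(\rho_{erg}(f))$. Conversely, the Markov property lets one shadow any finite $f$-orbit segment of $S$ by a $\sigma$-orbit segment, so every element of $\rho_{H_1}(f)$ is a weak-$*$ limit of averages $\tfrac1n\sum_{k<n}\Psi(\sigma^k\xi_n)$, hence lies in $R$; with Claim~\ref{Cla:2definitions} this gives $\conv(\rho_{erg}(f))=\rho_{mes}(f)=R$.

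Next I would realize interior points by thermodynamic formalism on the mixing SFT $\Sigma$. For $t\in H^1(S,\R)\cong H_1(S,\R)^{*}$, the locally constant potential $\langle t,\Psi\rangle$ has a unique equilibrium state $\nu_t$, which is ergodic, and the pressure function $t\mapsto P(\langle t,\Psi\rangle)$ is real-analytic, convex, with gradient $\nabla P(t)=\int_\Sigma\Psi\,\ud\nu_t$. Provided $R$ has nonempty interior (equivalently $P$ is strictly convex, i.e. no $\langle t,\Psi\rangle$ with $t\neq0$ is cohomologous to a constant), the convex conjugate of $P$ is the entropy-as-a-function-of-rotation-vector, whose domain is $R$, and a standard convex-analysis argument shows $\nabla P$ maps $H^1(S,\R)$ onto $\inte(R)$. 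Hence for every $v\in\inte(\conv(\rho_{erg}(f)))=\inte(R)$ there is $t$ with $\int_\Sigma\Psi\,\ud\nu_t=v$, and $\mu:=\pi_*\nu_t$ is an ergodic $f$-invariant measure with $r(\mu)=v$. This is the higher-genus transcription of the arguments of \cite{zbMATH00009916} (and of Ziemian's realization of interior rotation vectors of subshifts by equilibrium states), and it proves the first assertion.

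It remains to show $R$ — equivalently $\conv(\rho_{erg}(f))$ — has nonempty interior. If not, $R$ lies in an affine hyperplane: there are $\alpha\in H^1(S,\R)\setminus\{0\}$ and $c\in\R$ with $\langle\alpha,\int_\Sigma\Psi\,\ud\nu\rangle=c$ for all $\nu\in\M(\sigma)$, so $\langle\alpha,\Psi\rangle-c$ is a continuous coboundary over $\sigma$. Reading this back through $\pi$, every $f$-periodic orbit of period $q$ pairs with $\alpha$ to give exactly $qc$, so every periodic-orbit rotation vector of $f$ lies in $\{\langle\alpha,\cdot\rangle=c\}$. But a homeomorphism $f\in\Homeo_0(S)$ that is pseudo-Anosov relative to $F$ cannot have all its periodic-orbit homology classes confined to a proper affine subspace of $H_1(S,\R)$ — this is exactly where Nielsen--Thurston theory enters in \cite{zbMATH00009916} (equivalently, the cone of homology directions of the pseudo-Anosov suspension flow on $S\times\Sp^1$ is full-dimensional, since that flow admits the cross-section $S\times\{\mathrm{pt}\}$). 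This contradiction gives $\inte(\conv(\rho_{erg}(f)))\neq\emptyset$, completing the proof.

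I expect the genuine obstacle to be this last point: ruling out cohomological degeneracy of the displacement cocycle, i.e. establishing enough homological spreading among the periodic orbits of a pseudo-Anosov-relative-to-$F$ map isotopic to the identity. Once the symbolic model and the cocycle are in place, the realization of interior points is routine thermodynamic formalism. A secondary technical nuisance is the bookkeeping needed to define $\Psi$ across the boundary identifications of the Markov partition and to accommodate the finitely many points of $F$ together with the prong singularities of the invariant foliations, but these do not affect the homological computations.
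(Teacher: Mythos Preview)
The paper gives no proof here: it simply invokes \cite{zbMATH00009916} and asserts that those arguments carry over to higher genus. Your proposal is exactly the standard unpacking of that reference---build a Markov partition for the pseudo-Anosov representative, pass to a mixing subshift of finite type equipped with a locally constant $H_1(S,\R)$-valued displacement cocycle, and realize every interior point of the resulting rotation simplex as the integral of the cocycle against an ergodic equilibrium state. For the first assertion of the lemma this is correct and is the intended argument.

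You are also right to isolate the ``in particular'' clause as the only substantive step: the first sentence is vacuous unless $\conv(\rho_{erg}(f))$ already has nonempty interior. However, your attempt to extract this from Fried's cross-section theory does not work as written---the fact that the suspension flow admits the cross-section $S\times\{\mathrm{pt}\}$ only says that the cone of homology directions lies in an open half-space of $H_1(S\times\Sp^1,\R)$, not that it is full-dimensional (a flow all of whose orbits share a single rotation vector still has that cross-section). In the paper's logical flow this gap is harmless, because the lemma is only invoked along the chain $\textit{1.}\Rightarrow\textit{3.}\Rightarrow\textit{2.}$: once condition~\textit{1.} holds, Proposition~\ref{Prop:decomposition} (point~4, with $n=1$) already gives $\inte\big(\conv(\rho_{erg}(f))\big)\neq\emptyset$, and then your equilibrium-state argument shows every such interior point is realized by an ergodic measure, whence $\rho_{erg}(f)$ itself has nonempty interior. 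If you want the lemma as a standalone statement (only assuming \textit{3.}), you need an independent reason why a map in $\Homeo_0(S)$ that is pseudo-Anosov relative to a finite set has periodic-orbit rotation vectors affinely spanning $H_1(S,\R)$; this is true, but requires more than the cross-section remark you give.
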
 

\begin{remark}
By a repeated use of \cite[Theorem F]{guiheneuf2022homotopic}, it is possible to replace the invariant finite set $F$ in the above proof by a single periodic orbit. Hence, any homeomorphism in $\mathrm{Homeo}_0(S)$ whose ergodic rotation set has nonempty interior is isotopic to a pseudo-Anosov homeomorphism relative to one periodic orbit.
\end{remark}

\section[Small rotation set homeomorphisms do not act hyperbolically]{Small rotation set homeomorphisms do not act hyperbolically on the fine curve graph}\label{Sec:Last}

Suppose that $1.$ does not hold in the statement of the theorem.

\paragraph{First case:} the rotation set $\rho_{erg}(f)$ is contained in a rational hyperplane $H$. 

By \cite{MR0425967} and \cite{MR0454995}, there exists a nonzero primitive integral vector $v_1 \in H^{\perp}$ and a simple nonseparating closed curve $c$ that represents $v_1$. The morphism
$$\begin{array}{rcl} 
H_1(S, \mathbb{Z}) & \longrightarrow & \mathbb{Z} \\
a & \longmapsto & a \wedge v_1
\end{array}$$
defines a covering map $S_c \rightarrow S$. Another way to see this covering map is the following (see Figure~\ref{FigFirstCover}). Cut the surface $S$ along $c$ to obtain a surface $S'$ with two boundary components $c_1$, $c_2$. Take a family $(S_i)_{i \in \mathbb{Z}}$ of surfaces, where each of the surfaces $S_i$ is a copy of the surface $S'$ and denote by $c_{i,1}$, and $c_{i,2}$ the boundary components of $S_i$ which correspond respectively to $c_1$ and $c_2$. To form the surface $S_c$, glue, for any $i$, the curve $c_{i,2}$ with the curve $c_{i+1,1}$ in such a way that corresponding points of $c$ are glued together. Using the map $S' \rightarrow S$, we obtain a covering map $S_c \rightarrow S$. With this second vision, we can define a map $\ell : S_c \rightarrow \mathbb{Z}$, which, to any point of $S_{i} \setminus S_{i+1}$, associates the value $i$.

\begin{figure}
\begin{center}

\tikzset{every picture/.style={line width=0.75pt}} %set default line width to 0.75pt        

\begin{tikzpicture}[x=0.75pt,y=0.75pt,yscale=-1.2,xscale=1.4]
%Curve Lines [id:da3178273389759567] 
\draw  [dash pattern={on 4.5pt off 4.5pt}]  (161.36,113.91) .. controls (169.91,115) and (172.82,115) .. (181.11,115) ;
%Straight Lines [id:da5639141001441591] 
\draw  [dash pattern={on 4.5pt off 4.5pt}]  (161.95,135) -- (181.11,135) ;
%Straight Lines [id:da18201240965616106] 
\draw  [dash pattern={on 4.5pt off 4.5pt}]  (481.11,135) -- (500.27,135) ;
%Curve Lines [id:da4282984297260728] 
\draw  [dash pattern={on 4.5pt off 4.5pt}]  (481.11,115) .. controls (488.45,115.18) and (490.45,115.18) .. (501.2,114.09) ;
%Curve Lines [id:da886427827492347] 
\draw [color={rgb, 255:red, 0; green, 0; blue, 0 }  ,draw opacity=1 ][fill={rgb, 255:red, 0; green, 0; blue, 0 }  ,fill opacity=0.08 ]   (330.6,361.57) .. controls (310.4,360.97) and (300,370.97) .. (280.6,371.57) .. controls (232.8,372.57) and (232,302.17) .. (280.6,301.57) .. controls (292.62,301.46) and (311.57,311.6) .. (330.6,311.57) .. controls (360.4,310.57) and (370.4,301.77) .. (380.6,301.57) .. controls (428.45,302.14) and (431.6,372.57) .. (380.6,371.57) .. controls (370.8,371.77) and (352,361.77) .. (330.6,361.57) -- cycle ;
%Curve Lines [id:da3344988925870114] 
\draw [draw opacity=0][fill={rgb, 255:red, 255; green, 255; blue, 255 }  ,fill opacity=1 ]   (275.78,338.28) .. controls (283.52,329.61) and (291.18,332.19) .. (295.69,338.28) .. controls (289.35,342.53) and (282.02,342.86) .. (275.78,338.28) -- cycle ;
%Curve Lines [id:da11907148027276926] 
\draw    (270.6,334.23) .. controls (280.46,344.16) and (290.89,343.87) .. (300.6,334.23) ;
%Curve Lines [id:da534524676476817] 
\draw    (275.78,338.28) .. controls (282.64,330.35) and (290.55,331.35) .. (295.69,338.28) ;

%Curve Lines [id:da12675689454101546] 
\draw [draw opacity=0][fill={rgb, 255:red, 255; green, 255; blue, 255 }  ,fill opacity=1 ]   (365.78,338.38) .. controls (373.52,329.71) and (381.18,332.29) .. (385.69,338.38) .. controls (379.35,342.63) and (372.02,342.96) .. (365.78,338.38) -- cycle ;
%Curve Lines [id:da24110883049843423] 
\draw    (360.6,334.33) .. controls (370.46,344.26) and (380.89,343.97) .. (390.6,334.33) ;
%Curve Lines [id:da9367828631047802] 
\draw    (365.78,338.38) .. controls (372.64,330.45) and (380.55,331.45) .. (385.69,338.38) ;

%Curve Lines [id:da5069632313216715] 
\draw [color={rgb, 255:red, 208; green, 2; blue, 27 }  ,draw opacity=1 ]   (284.46,341.64) .. controls (277.6,342) and (275.31,372.28) .. (283.6,371.14) ;
\draw [shift={(278.29,359.4)}, rotate = 273.71] [fill={rgb, 255:red, 208; green, 2; blue, 27 }  ,fill opacity=1 ][line width=0.08]  [draw opacity=0] (7.14,-3.43) -- (0,0) -- (7.14,3.43) -- (4.74,0) -- cycle    ;
%Curve Lines [id:da998734172003302] 
\draw [color={rgb, 255:red, 208; green, 2; blue, 27 }  ,draw opacity=1 ] [dash pattern={on 4.5pt off 4.5pt}]  (284.46,341.64) .. controls (289.86,341.04) and (292.2,369.87) .. (283.6,371.14) ;
%Straight Lines [id:da7252417033927345] 
\draw    (330,265) -- (330,295.75) ;
\draw [shift={(330,298.75)}, rotate = 270] [fill={rgb, 255:red, 0; green, 0; blue, 0 }  ][line width=0.08]  [draw opacity=0] (8.93,-4.29) -- (0,0) -- (8.93,4.29) -- (5.93,0) -- cycle    ;
%Shape: Polygon Curved [id:ds3267872966057449] 
\draw  [fill={rgb, 255:red, 0; green, 0; blue, 0 }  ,fill opacity=0.08 ] (278.89,229.97) .. controls (358.69,230.07) and (269.09,169.87) .. (328.89,169.97) .. controls (389.29,169.87) and (299.09,230.27) .. (378.89,229.97) .. controls (384.14,230.03) and (384.02,250.03) .. (378.89,249.97) .. controls (368.89,250.07) and (288.69,249.87) .. (278.89,249.97) .. controls (273.89,250.16) and (273.89,230.03) .. (278.89,229.97) -- cycle ;
%Curve Lines [id:da4452841669620049] 
\draw [draw opacity=0][fill={rgb, 255:red, 255; green, 255; blue, 255 }  ,fill opacity=1 ]   (319.07,190.55) .. controls (326.81,181.88) and (334.47,184.47) .. (338.98,190.55) .. controls (332.64,194.8) and (325.31,195.13) .. (319.07,190.55) -- cycle ;
%Curve Lines [id:da8178518221221652] 
\draw    (313.89,186.5) .. controls (323.75,196.43) and (334.18,196.15) .. (343.89,186.5) ;
%Curve Lines [id:da8506108054389087] 
\draw    (319.07,190.55) .. controls (325.93,182.62) and (333.84,183.62) .. (338.98,190.55) ;

%Shape: Polygon Curved [id:ds49855624799472376] 
\draw  [color={rgb, 255:red, 208; green, 2; blue, 27 }  ,draw opacity=1 ] (378.89,249.97) .. controls (373.64,249.91) and (373.77,229.78) .. (378.89,229.97) .. controls (384.02,230.16) and (384.14,250.03) .. (378.89,249.97) -- cycle ;
%Shape: Polygon Curved [id:ds7233906046577856] 
\draw  [color={rgb, 255:red, 208; green, 2; blue, 27 }  ,draw opacity=1 ] (278.89,249.97) .. controls (273.64,249.91) and (273.77,229.78) .. (278.89,229.97) .. controls (284.02,230.16) and (284.14,250.03) .. (278.89,249.97) -- cycle ;

%Shape: Polygon Curved [id:ds5692649120270057] 
\draw  [fill={rgb, 255:red, 0; green, 0; blue, 0 }  ,fill opacity=0.08 ] (393.89,229.97) .. controls (473.69,230.07) and (384.09,169.87) .. (443.89,169.97) .. controls (504.29,169.87) and (414.09,230.27) .. (493.89,229.97) .. controls (499.14,230.03) and (499.02,250.03) .. (493.89,249.97) .. controls (483.89,250.07) and (403.69,249.87) .. (393.89,249.97) .. controls (388.89,250.16) and (388.89,230.03) .. (393.89,229.97) -- cycle ;
%Curve Lines [id:da5928315690399362] 
\draw [draw opacity=0][fill={rgb, 255:red, 255; green, 255; blue, 255 }  ,fill opacity=1 ]   (434.07,190.55) .. controls (441.81,181.88) and (449.47,184.47) .. (453.98,190.55) .. controls (447.64,194.8) and (440.31,195.13) .. (434.07,190.55) -- cycle ;
%Curve Lines [id:da2322682027688281] 
\draw    (428.89,186.5) .. controls (438.75,196.43) and (449.18,196.15) .. (458.89,186.5) ;
%Curve Lines [id:da9662460054419528] 
\draw    (434.07,190.55) .. controls (440.93,182.62) and (448.84,183.62) .. (453.98,190.55) ;

%Shape: Polygon Curved [id:ds6502800004181344] 
\draw  [color={rgb, 255:red, 208; green, 2; blue, 27 }  ,draw opacity=1 ] (493.89,249.97) .. controls (488.64,249.91) and (488.77,229.78) .. (493.89,229.97) .. controls (499.02,230.16) and (499.14,250.03) .. (493.89,249.97) -- cycle ;
%Shape: Polygon Curved [id:ds7111819551350316] 
\draw  [color={rgb, 255:red, 208; green, 2; blue, 27 }  ,draw opacity=1 ] (393.89,249.97) .. controls (388.64,249.91) and (388.77,229.78) .. (393.89,229.97) .. controls (399.02,230.16) and (399.14,250.03) .. (393.89,249.97) -- cycle ;

%Shape: Polygon Curved [id:ds04695681846378885] 
\draw  [fill={rgb, 255:red, 0; green, 0; blue, 0 }  ,fill opacity=0.08 ] (166.11,229.97) .. controls (245.91,230.07) and (156.31,169.87) .. (216.11,169.97) .. controls (276.51,169.87) and (186.31,230.27) .. (266.11,229.97) .. controls (271.36,230.03) and (271.23,250.03) .. (266.11,249.97) .. controls (256.11,250.07) and (175.91,249.87) .. (166.11,249.97) .. controls (161.11,250.16) and (161.11,230.03) .. (166.11,229.97) -- cycle ;
%Curve Lines [id:da32490299335618655] 
\draw [draw opacity=0][fill={rgb, 255:red, 255; green, 255; blue, 255 }  ,fill opacity=1 ]   (206.29,190.55) .. controls (214.03,181.88) and (221.69,184.47) .. (226.2,190.55) .. controls (219.86,194.8) and (212.53,195.13) .. (206.29,190.55) -- cycle ;
%Curve Lines [id:da9355494315916936] 
\draw    (201.11,186.5) .. controls (210.97,196.43) and (221.39,196.15) .. (231.11,186.5) ;
%Curve Lines [id:da6038596542347633] 
\draw    (206.29,190.55) .. controls (213.15,182.62) and (221.06,183.62) .. (226.2,190.55) ;

%Shape: Polygon Curved [id:ds9905130706187378] 
\draw  [color={rgb, 255:red, 208; green, 2; blue, 27 }  ,draw opacity=1 ] (266.11,249.97) .. controls (260.86,249.91) and (260.98,229.78) .. (266.11,229.97) .. controls (271.23,230.16) and (271.36,250.03) .. (266.11,249.97) -- cycle ;
%Shape: Polygon Curved [id:ds46494645381408084] 
\draw  [color={rgb, 255:red, 208; green, 2; blue, 27 }  ,draw opacity=1 ] (166.11,249.97) .. controls (160.86,249.91) and (160.98,229.78) .. (166.11,229.97) .. controls (171.23,230.16) and (171.36,250.03) .. (166.11,249.97) -- cycle ;

%Shape: Polygon Curved [id:ds8694318769328118] 
\draw  [fill={rgb, 255:red, 0; green, 0; blue, 0 }  ,fill opacity=0.08 ] (281.11,115) .. controls (360.91,115.1) and (271.31,54.9) .. (331.11,55) .. controls (391.51,54.9) and (301.31,115.3) .. (381.11,115) .. controls (386.36,115.06) and (386.23,135.06) .. (381.11,135) .. controls (371.11,135.1) and (290.91,134.9) .. (281.11,135) .. controls (276.11,135.19) and (276.11,115.06) .. (281.11,115) -- cycle ;
%Curve Lines [id:da3291764809289165] 
\draw [draw opacity=0][fill={rgb, 255:red, 255; green, 255; blue, 255 }  ,fill opacity=1 ]   (321.29,75.58) .. controls (329.03,66.91) and (336.69,69.5) .. (341.2,75.58) .. controls (334.86,79.83) and (327.53,80.16) .. (321.29,75.58) -- cycle ;
%Curve Lines [id:da3097362033535297] 
\draw    (316.11,71.53) .. controls (325.97,81.46) and (336.39,81.18) .. (346.11,71.53) ;
%Curve Lines [id:da25659161565908306] 
\draw    (321.29,75.58) .. controls (328.15,67.65) and (336.06,68.65) .. (341.2,75.58) ;

%Shape: Polygon Curved [id:ds338325299694199] 
\draw  [color={rgb, 255:red, 208; green, 2; blue, 27 }  ,draw opacity=1 ] (381.11,135) .. controls (375.86,134.94) and (375.98,114.81) .. (381.11,115) .. controls (386.23,115.19) and (386.36,135.06) .. (381.11,135) -- cycle ;
%Shape: Polygon Curved [id:ds5431578665794736] 
\draw  [color={rgb, 255:red, 208; green, 2; blue, 27 }  ,draw opacity=1 ] (281.11,135) .. controls (275.86,134.94) and (275.98,114.81) .. (281.11,115) .. controls (286.23,115.19) and (286.36,135.06) .. (281.11,135) -- cycle ;

%Shape: Polygon Curved [id:ds15042977816081649] 
\draw  [fill={rgb, 255:red, 0; green, 0; blue, 0 }  ,fill opacity=0.08 ] (181.11,115) .. controls (260.91,115.1) and (171.31,54.9) .. (231.11,55) .. controls (291.51,54.9) and (201.31,115.3) .. (281.11,115) .. controls (286.36,115.06) and (286.23,135.06) .. (281.11,135) .. controls (271.11,135.1) and (190.91,134.9) .. (181.11,135) .. controls (176.11,135.19) and (176.11,115.06) .. (181.11,115) -- cycle ;
%Curve Lines [id:da6133970286487869] 
\draw [draw opacity=0][fill={rgb, 255:red, 255; green, 255; blue, 255 }  ,fill opacity=1 ]   (221.29,75.58) .. controls (229.03,66.91) and (236.69,69.5) .. (241.2,75.58) .. controls (234.86,79.83) and (227.53,80.16) .. (221.29,75.58) -- cycle ;
%Curve Lines [id:da4818030147865514] 
\draw    (216.11,71.53) .. controls (225.97,81.46) and (236.39,81.18) .. (246.11,71.53) ;
%Curve Lines [id:da4329328330693343] 
\draw    (221.29,75.58) .. controls (228.15,67.65) and (236.06,68.65) .. (241.2,75.58) ;

%Shape: Polygon Curved [id:ds32501065431487697] 
\draw  [color={rgb, 255:red, 208; green, 2; blue, 27 }  ,draw opacity=1 ] (281.11,135) .. controls (275.86,134.94) and (275.98,114.81) .. (281.11,115) .. controls (286.23,115.19) and (286.36,135.06) .. (281.11,135) -- cycle ;
%Shape: Polygon Curved [id:ds6545863134098115] 
\draw  [color={rgb, 255:red, 208; green, 2; blue, 27 }  ,draw opacity=1 ] (181.11,135) .. controls (175.86,134.94) and (175.98,114.81) .. (181.11,115) .. controls (186.23,115.19) and (186.36,135.06) .. (181.11,135) -- cycle ;

%Shape: Polygon Curved [id:ds08145217110288439] 
\draw  [fill={rgb, 255:red, 0; green, 0; blue, 0 }  ,fill opacity=0.08 ] (381.11,115) .. controls (460.91,115.1) and (371.31,54.9) .. (431.11,55) .. controls (491.51,54.9) and (401.31,115.3) .. (481.11,115) .. controls (486.36,115.06) and (486.23,135.06) .. (481.11,135) .. controls (471.11,135.1) and (390.91,134.9) .. (381.11,135) .. controls (376.11,135.19) and (376.11,115.06) .. (381.11,115) -- cycle ;
%Curve Lines [id:da21875578627364434] 
\draw [draw opacity=0][fill={rgb, 255:red, 255; green, 255; blue, 255 }  ,fill opacity=1 ]   (421.29,75.58) .. controls (429.03,66.91) and (436.69,69.5) .. (441.2,75.58) .. controls (434.86,79.83) and (427.53,80.16) .. (421.29,75.58) -- cycle ;
%Curve Lines [id:da4977420624108184] 
\draw    (416.11,71.53) .. controls (425.97,81.46) and (436.39,81.18) .. (446.11,71.53) ;
%Curve Lines [id:da5585691123961614] 
\draw    (421.29,75.58) .. controls (428.15,67.65) and (436.06,68.65) .. (441.2,75.58) ;

%Shape: Polygon Curved [id:ds018297352166349756] 
\draw  [color={rgb, 255:red, 208; green, 2; blue, 27 }  ,draw opacity=1 ] (481.11,135) .. controls (475.86,134.94) and (475.98,114.81) .. (481.11,115) .. controls (486.23,115.19) and (486.36,135.06) .. (481.11,135) -- cycle ;
%Shape: Polygon Curved [id:ds7907941611894888] 
\draw  [color={rgb, 255:red, 208; green, 2; blue, 27 }  ,draw opacity=1 ] (381.11,135) .. controls (375.86,134.94) and (375.98,114.81) .. (381.11,115) .. controls (386.23,115.19) and (386.36,135.06) .. (381.11,135) -- cycle ;

%Curve Lines [id:da23695750206559096] 
\draw    (180,140) .. controls (214.13,145.01) and (239.15,145.41) .. (277.06,140.4) ;
\draw [shift={(280,140)}, rotate = 172.19] [fill={rgb, 255:red, 0; green, 0; blue, 0 }  ][line width=0.08]  [draw opacity=0] (8.04,-3.86) -- (0,0) -- (8.04,3.86) -- (5.34,0) -- cycle    ;

% Text Node
\draw (275,356) node [anchor=east] [inner sep=0.75pt]  [color={rgb, 255:red, 208; green, 2; blue, 27 }  ,opacity=1 ]  {$c$};
% Text Node
\draw (347,306.07) node [anchor=south east] [inner sep=0.75pt]    {$S$};
% Text Node
\draw (300,252) node [anchor=north west][inner sep=0.75pt]    {$S_{0}$};
% Text Node
\draw (436,255.4) node [anchor=north west][inner sep=0.75pt]    {$S_{1}$};
% Text Node
\draw (201,255.4) node [anchor=north west][inner sep=0.75pt]    {$S_{-1}$};
% Text Node
\draw (283.59,239.91) node [anchor=west] [inner sep=0.75pt]  [color={rgb, 255:red, 208; green, 2; blue, 27 }  ,opacity=1 ]  {$c_{0,1}$};
% Text Node
\draw (374.86,239.8) node [anchor=east] [inner sep=0.75pt]  [color={rgb, 255:red, 208; green, 2; blue, 27 }  ,opacity=1 ]  {$c_{0,2}$};
% Text Node
\draw (487.89,240) node [anchor=east] [inner sep=0.75pt]  [color={rgb, 255:red, 208; green, 2; blue, 27 }  ,opacity=1 ]  {$c_{1,2}$};
% Text Node
\draw (261,240.14) node [anchor=east] [inner sep=0.75pt]  [color={rgb, 255:red, 208; green, 2; blue, 27 }  ,opacity=1 ]  {$c_{-1,2}$};
% Text Node
\draw (398.45,240.49) node [anchor=west] [inner sep=0.75pt]  [color={rgb, 255:red, 208; green, 2; blue, 27 }  ,opacity=1 ]  {$c_{1,1}$};
% Text Node
\draw (171.59,239.8) node [anchor=west] [inner sep=0.75pt]  [color={rgb, 255:red, 208; green, 2; blue, 27 }  ,opacity=1 ]  {$c_{-1,1}$};
% Text Node
\draw (400.57,137.03) node [anchor=north west][inner sep=0.75pt]    {$S_{c}$};
% Text Node
\draw (228.23,147.4) node [anchor=north] [inner sep=0.75pt]    {$T$};
% Text Node
\draw (329.59,152.47) node    {$\simeq $};

\end{tikzpicture}

\caption{\label{FigFirstCover} The covering map $S_c\to S$ of the first case.}
\end{center}
\end{figure}
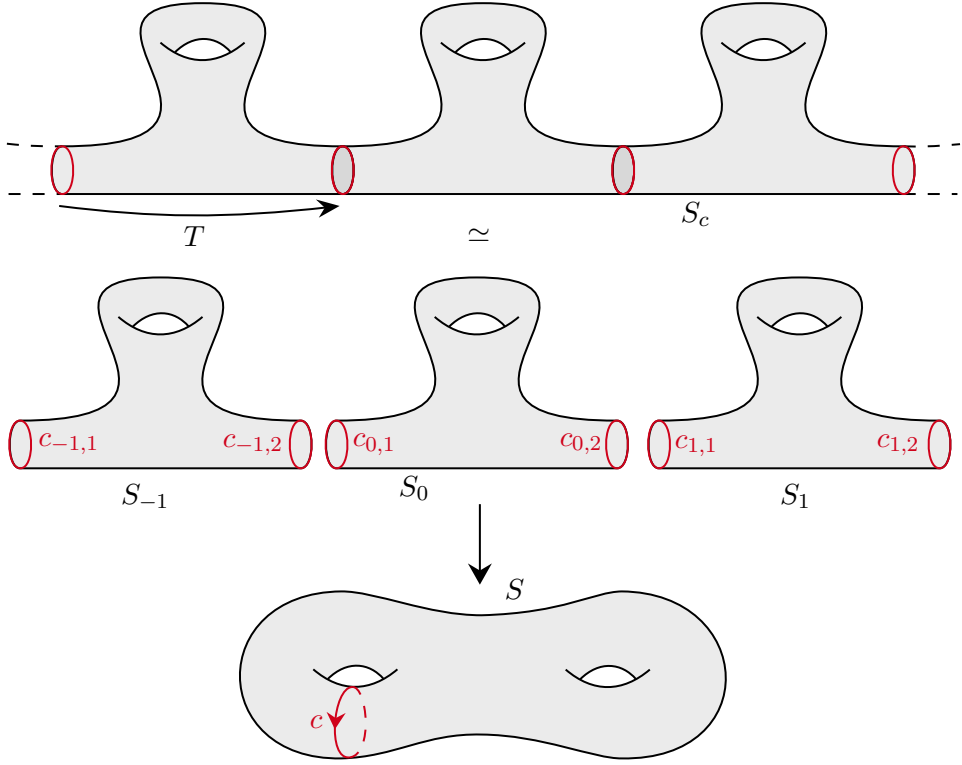
  
The group of automorphisms of this covering map is infinite cyclic and is generated by an automorphism which we will denote by $T$. We choose $T$ in such a way that, for any $i$, $T$ sends $S_i$ to $S_{i+1}$
  
Fix also a lift $f_c:S_c \rightarrow S_c$ of $f$. This lift commutes with $T$. Using this lift, we can define as follows new rotation sets for $f$, which are adapted to this lift.

We define $\rho^{t}_{v_1}(f)$ as the subset of $\mathbb{R}$ consisting of limit values of sequences of the form
$$\left( \frac{\ell(f_c^n(x_n))-\ell(x_n)}{n} \right),$$
where $(x_n)_n$ is any sequence of points of $S_c$. For any point $x$ of $S$, choose a lift $x_c$ of $x$ in such a way that $x \mapsto x_c$ is a measurable map. Let us denote by $\mathcal{M}(f)$ the set of $f$-invariant Borel probability measures on $S$ and by $\mathcal{M}_{erg}(f)$ the subset of $\mathcal{M}(f)$ consisting of ergodic probability measures.

\begin{lemma} \label{Lem:transverserotationset}
\begin{align*}
\rho^{t}_{v_1}(f) & = \left\{ \int_{S} \ell(f_c(x_c))-\ell(x_c) \ud\mu(x) \ | \ \mu \in \mathcal{M}(f) \right\} \\
& = \mathrm{Conv}\left( \left\{ \int_{S} \ell(f_c(x_c))-\ell(x_c) \ud\mu(x) \ | \ \mu \in \mathcal{M}_{erg}(f) \right\} \right).
\end{align*}
\end{lemma}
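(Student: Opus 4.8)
The plan is to reduce the statement to the familiar description of the ``rotation set'' of a \emph{continuous} observable for the dynamical system $f\colon S\to S$. Write $\phi(x):=\ell(f_c(x_c))-\ell(x_c)$, the integrand appearing in the statement. I would first observe that, since $f_c$ commutes with $T$ and $\ell\circ T=\ell+1$, the quantity $\ell(f_c(y))-\ell(y)$ is unchanged when $y$ is replaced by $T^m y$; hence it depends only on $\pi(y)$ and equals $\phi(\pi(y))$ for every $y\in S_c$, where $\pi\colon S_c\to S$ is the covering. Telescoping then gives, for every $n\ge 1$ and every $x_n\in S_c$,
\[\ell(f_c^{\,n}(x_n))-\ell(x_n)=\sum_{k=0}^{n-1}\phi\bigl(f^k(\pi(x_n))\bigr),\]
so that $\rho^{t}_{v_1}(f)$ is exactly the set of limit values of $\bigl(\tfrac1n\sum_{k=0}^{n-1}\phi(f^k(z_n))\bigr)_n$ over all sequences $(z_n)$ of points of $S$. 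Note that $\phi$ is bounded (a compact fundamental domain for $T$, and its image by $f_c$, each meet only finitely many of the regions $S_i\setminus S_{i+1}$) and Borel, but in general not continuous: it jumps across $\pi^{-1}(c)$ and across $f_c^{-1}(\pi^{-1}(c))$.

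This lack of continuity is the main obstacle: it prevents one from invoking directly the weak-$*$ continuity of $\mu\mapsto\int_S\phi\,\ud\mu$ on $\mathcal{M}(f)$, or an intermediate value argument for the Birkhoff averages. To remove it I would trade $\phi$ for a cohomologous continuous function. Fix a continuous $\Lambda\colon S_c\to\R$ with $\Lambda\circ T=\Lambda+1$ (such a $\Lambda$ exists: $\Lambda\bmod 1$ amounts to a continuous map $S\to\R/\Z$ whose homotopy class classifies the covering $S_c\to S$). As $\Lambda$ and $\ell$ both increase by $1$ under $T$, the difference $\psi:=\Lambda-\ell$ is $T$-invariant and descends to a bounded Borel function on $S$, still written $\psi$. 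Put $\Phi(x):=\Lambda(f_c(x_c))-\Lambda(x_c)$; the same $T$-equivariance argument shows this is independent of the chosen lift $x_c$, and using a continuous local section of $\pi$ near an arbitrary point together with the continuity of $\Lambda$ and $f_c$ shows $\Phi$ is continuous on $S$ (hence bounded). Writing $\ell=\Lambda-\psi\circ\pi$ on $S_c$ and using $\pi\circ f_c=f\circ\pi$ then yields the coboundary identity $\phi=\Phi+\psi-\psi\circ f$ on $S$.

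I would then conclude as follows. Summing the coboundary identity, $\tfrac1n\sum_{k=0}^{n-1}\phi\circ f^k=\tfrac1n\sum_{k=0}^{n-1}\Phi\circ f^k+\tfrac1n(\psi-\psi\circ f^{\,n})$, and the last term tends to $0$ uniformly; hence $\rho^{t}_{v_1}(f)$ equals the set of limit values of $\bigl(\tfrac1n\sum_{k=0}^{n-1}\Phi(f^k(z_n))\bigr)_n$, i.e.\ the rotation set of the \emph{continuous} observable $\Phi$. For a continuous observable over a compact connected space, this set is the closed interval $\bigl[\min_{\mu\in\mathcal{M}(f)}\int_S\Phi\,\ud\mu,\ \max_{\mu\in\mathcal{M}(f)}\int_S\Phi\,\ud\mu\bigr]$, and it coincides both with $\{\int_S\Phi\,\ud\mu:\mu\in\mathcal{M}(f)\}$ and with $\mathrm{Conv}\{\int_S\Phi\,\ud\mu:\mu\in\mathcal{M}_{erg}(f)\}$; I would include the short standard argument (the first set is a closed interval by compactness and convexity of $\mathcal{M}(f)$ and weak-$*$ continuity of $\mu\mapsto\int\Phi\,\ud\mu$, with the extremities attained at ergodic measures; conversely the continuous functions $\tfrac1n\sum_{k=0}^{n-1}\Phi\circ f^k$ have connected, hence interval, ranges whose endpoints converge to those of the interval, so every interior value is realised by a constant sequence, while any limit value is $\int\Phi\,\ud\mu$ for a weak-$*$ accumulation point $\mu$ of empirical measures, by continuity of $\Phi$). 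Finally, since $\psi-\psi\circ f$ has zero integral against every $f$-invariant measure, $\int_S\Phi\,\ud\mu=\int_S\phi\,\ud\mu$ for all $\mu\in\mathcal{M}(f)$, so the three sets just obtained are exactly the three sets in the statement. (One could alternatively identify $\rho^{t}_{v_1}(f)$ with $\{r\wedge v_1:\ r\in\rho_{mes}(f)\}$ using Claim~\ref{Cla:2definitions}, which produces the middle and right-hand sets at once; but the continuity reduction is still needed to see that this equals $\rho^{t}_{v_1}(f)$ rather than merely its convex hull.)
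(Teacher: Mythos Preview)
Your proof is correct and in fact more careful than the paper's. The paper argues directly with the discontinuous integrand $\phi(x)=\ell(f_c(x_c))-\ell(x_c)$: it forms empirical measures $\nu_k=\frac{1}{n_k}\sum_{j=0}^{n_k-1}\delta_{f^j(x_k)}$ along orbit segments, passes to a weak-$*$ limit $\mu\in\mathcal{M}(f)$, and simply asserts $\int\phi\,\ud\nu_k\to\int\phi\,\ud\mu$; Birkhoff's theorem then shows that each ergodic integral lies in $\rho^t_{v_1}(f)$, and the second equality follows from convexity of $\mathcal{M}(f)$. You are right that $\phi$ jumps across $c\cup f^{-1}(c)$, so that weak-$*$ passage is not justified as written; the paper also does not argue that $\rho^t_{v_1}(f)$ is an interval, which is needed to upgrade $\{\text{ergodic integrals}\}\subset\rho^t_{v_1}(f)$ to $\mathrm{Conv}\{\text{ergodic integrals}\}\subset\rho^t_{v_1}(f)$. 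Your coboundary replacement $\phi=\Phi+(\psi-\psi\circ f)$ with $\Phi$ continuous and $\psi$ bounded resolves both issues at once: weak-$*$ convergence applies to $\Phi$, and the intermediate value theorem for the continuous Birkhoff sums of $\Phi$ (whose ranges contain $[\min_\mu\int\Phi\,\ud\mu,\max_\mu\int\Phi\,\ud\mu]$ at every time) yields convexity. It is worth noting that the paper introduces exactly your function $\Lambda$ (called $\lambda$ there) in the proof of the very next lemma, just not here.
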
 

\begin{proof}
The equality
\begin{multline*}
\left\{ \int_{S} \ell(f_c(x_c))-\ell(x_c) \ud\mu(x) \ | \ \mu \in \mathcal{M}(f) \right\} \\
=  \mathrm{Conv} \left( \left\{ \int_{S} \ell(f_c(x_c))-\ell(x_c) \ud\mu(x) \ | \ \mu \in \mathcal{M}_{erg}(f) \right\} \right)
\end{multline*}
holds as $\mathcal{M}(f)$ is the convex hull of $\mathcal{M}_{erg}(f)$, and as the map $\mu\mapsto \int_{S} \ell(f_c(x_c))-\ell(x_c) \ud\mu(x)$ is affine.

Take a real number $r \in \rho^{t}_{v_1}(f)$. Then there exists a sequence of integers $(n_k)_k$ and a sequence $(x'_k)_k$ of points of $S_c$ such that
$$ \lim_{k \rightarrow + \infty} \frac{\ell(f_c^{n_k}(x'_k))-\ell(x'_k)}{n_k}=r.$$
For any $k$, let $x_k$ be the projection of $x'_k$ on the surface $S$ and denote
$$\nu_k = \frac{1}{n_k} \sum_{k=0}^{n_k-1} \delta_{f^{k}(x_k)}.$$
Then
$$\frac{\ell(f_c^{n_k}(x'_k))-\ell(x'_k)}{n_k}=\int_{S} \ell(f_c(x_c))-\ell(x_c) d\nu_k(x).$$
Let $\mu$ be a weak-* limit of the probability measures $\nu_k$. Observe that $\mu \in \mathcal{M}(f)$ and that
$$r=\int_{S} \ell(f_c(x_c))-\ell(x_c) \ud\mu(x).$$
This shows the inclusion $\subset$ for the first equality of the lemma.

Finally, if $\mu \in \mathcal{M}_{erg}(f)$, apply Birkhoff ergodic theorem to the function $x \mapsto \ell(f_c(x_c))-\ell(x_c)$ to obtain that
$$\int_{S} \ell(f_c(x_c))-\ell(x_c) \ud\mu(x) \in \rho^{t}_{v_1}(f).$$
This shows the reverse inclusion for the first equality of the lemma.
\end{proof}

As a consequence of the Lemma \ref{Lem:transverserotationset} and Lemma \ref{Lem:extremalpoints}, we obtain the following result.

\begin{lemma} \label{Lem:rotationsetcomparison}
\begin{align*}
\rho^{t}_{v_1}(f) & = \left\{ a \wedge v_1 \mid a \in \rho_{H_1}(f) \right\} \\
& = \mathrm{Conv}\big( \{ a \wedge v_1 \mid a \in \rho_{erg}(f) \} \big).
\end{align*}
\end{lemma}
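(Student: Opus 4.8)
We proceed by showing that the transverse displacement $\ell(f_c^n(x_c))-\ell(x_c)$ agrees with the intersection number $c_{x,n}\wedge v_1$ up to an error bounded uniformly in $x$ and $n$, and then transporting the description of $\rho^{t}_{v_1}(f)$ obtained in Lemma~\ref{Lem:transverserotationset}.

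For the key estimate, I would fix a smooth closed $1$-form $\omega_1$ on $S$ whose cohomology class is the Poincaré dual of $v_1$, so that $\int_\gamma\omega_1=\gamma\wedge v_1$ for every $1$-cycle $\gamma$, and denote by $p\colon S_c\to S$ the covering map. Since $S_c\to S$ is the $\Z$-cover attached to the homomorphism $a\mapsto a\wedge v_1$ on $H_1(S,\Z)$, every loop of $S_c$ projects to a loop of $S$ whose homology class is killed by this homomorphism; hence $p^*\omega_1$ integrates to zero on every loop of $S_c$, i.e.\ it is exact: $p^*\omega_1=\mathrm{d}g$ for some smooth $g\colon S_c\to\R$. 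After possibly reversing the enumeration of the sheets $S_i$ (a harmless choice in the construction of $S_c$), one checks that $g\circ T=g+1$, exactly as $\ell\circ T=\ell+1$; hence $g-\ell$ is $T$-invariant and therefore bounded on $S_c$, being bounded on the compact fundamental domain $S_0$. Since $g(f_c^n(x_c))-g(x_c)=\int_{(f_t(x))_{t\in[0,n]}}\omega_1$ (the lift of the isotopy path from $x_c$ to $f_c^n(x_c)$ projects onto $(f_t(x))_{t\in[0,n]}$), and the latter integral differs from $\int_{c_{x,n}}\omega_1=c_{x,n}\wedge v_1$ only by $\int_{g_{f^n(x),x}}\omega_1$, I obtain
\[\bigl|\,\ell(f_c^n(x_c))-\ell(x_c)-c_{x,n}\wedge v_1\,\bigr|\le 2\sup_{S_c}|g-\ell|+\|\omega_1\|_{C^0}\diam(S)\]
for all $x\in S$, all lifts $x_c$ of $x$, and all $n\ge 0$.

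Granting this, the first equality follows. For $\rho^{t}_{v_1}(f)\subseteq\{a\wedge v_1\mid a\in\rho_{H_1}(f)\}$: if $r\in\rho^{t}_{v_1}(f)$, pick $x_n\in S_c$ and $n_k\to\infty$ with $\frac1{n_k}(\ell(f_c^{n_k}(x_{n_k}))-\ell(x_{n_k}))\to r$, and set $y_k=p(x_{n_k})$; by the estimate $r=\lim_k\frac1{n_k}c_{y_k,n_k}\wedge v_1$, and since $(\frac1{n_k}c_{y_k,n_k})_k$ is bounded (the cycle defining $c_{y,n}$ has length $O(n)$) a subsequence converges to some $a\in\rho_{H_1}(f)$, so $r=a\wedge v_1$. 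For the reverse inclusion: if $a\in\rho_{H_1}(f)$, write $a=\lim_k\frac1{n_k}c_{y_k,n_k}$, pick a lift $x_k$ of $y_k$, and assemble these into a sequence in $S_c$; the estimate gives $\frac1{n_k}(\ell(f_c^{n_k}(x_k))-\ell(x_k))\to a\wedge v_1\in\rho^{t}_{v_1}(f)$.

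For the second equality, apply Birkhoff's theorem (as in the proof of Lemma~\ref{Lem:transverserotationset}) together with the estimate along an orbit and Lemma~\ref{Lem:ergodicrotationvector}: for an ergodic measure $\mu$ and $\mu$-almost every $x$,
\[\int_S\bigl(\ell(f_c(x_c))-\ell(x_c)\bigr)\ud\mu=\lim_{n\to\infty}\frac1n\bigl(\ell(f_c^n(x_c))-\ell(x_c)\bigr)=r(\mu)\wedge v_1,\]
so Lemma~\ref{Lem:transverserotationset} gives $\rho^{t}_{v_1}(f)=\mathrm{Conv}(\{r(\mu)\wedge v_1\mid\mu\in\mathcal{M}_{erg}(f)\})=\mathrm{Conv}(\{a\wedge v_1\mid a\in\rho_{erg}(f)\})$. (One could instead argue: $\rho^{t}_{v_1}(f)$ is an interval by Lemma~\ref{Lem:transverserotationset}, and by Claim~\ref{Cla:2definitions} one has $\{a\wedge v_1\mid a\in\rho_{erg}(f)\}\subseteq\{a\wedge v_1\mid a\in\rho_{H_1}(f)\}\subseteq\mathrm{Conv}(\{a\wedge v_1\mid a\in\rho_{erg}(f)\})$, whence the first equality forces equality throughout.) I expect the main obstacle to be the bounded-error estimate: it makes rigorous the heuristic that $\ell$ counts signed intersections with $c$, and the care lies in smoothing the piecewise-constant $\ell$ through the primitive $g$ of the exact form $p^*\omega_1$ and in pinning down the orientation conventions so that the sign in front of $c_{x,n}\wedge v_1$ is correct.
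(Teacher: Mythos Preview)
Your proof is correct and follows essentially the same approach as the paper's: both replace the integer-valued map $\ell$ by a smooth function on $S_c$ (the paper's $\lambda$, your $g$) whose differential descends to a closed $1$-form on $S$ Poincar\'e dual to $v_1$, and then use the resulting bounded-error comparison to identify the two rotation sets. The only cosmetic difference is that the paper derives the second equality from the connectedness of $\{a\wedge v_1\mid a\in\rho_{H_1}(f)\}\subset\R$ together with Claim~\ref{Cla:2definitions}, which is exactly the alternative route you sketch at the end.
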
 

\begin{proof}
Take any differentiable map $\lambda:S_c \rightarrow \mathbb{R}$ such that, for any point $z$ on $S_c$, $\lambda(T(z))=\lambda(z)+1$. The differential of this map is invariant under $T$ and defines a closed $1$-form $\omega$ on the surface $S$. Observe that, for any closed curve $\gamma:[0,1] \rightarrow S$,
$$\int_{\gamma} \omega= \ell(\gamma_c(1))- \ell(\gamma_c(0))= [\gamma] \wedge v_1 ,$$
where $\gamma_c$ is a lift of $\gamma$ to the surface $S_c$.

Then, by Lemma \ref{Lem:extremalpoints},
$$d(\rho_{H_1}(f))(\omega)=\left\{ a \wedge v_1 \ | \ a \in \rho_{H_1}(f) \right\}  = \mathrm{Conv}\left( \left\{ a \wedge v_1 \ | \ a \in \rho_{erg}(f) \right\} \right)$$
as $d(\rho_{H_1}(f))(\omega)$ is a connected and hence a convex subset of $\mathbb{R}$. 

It remains to prove that $d(\rho_{H_1}(f))(\omega)=\rho^{t}_{v_1}(f)$. This last equality is a consequence of the fact that any converging sequence of the form $(\frac{1}{n_k}d(c_{x_k,n_k})(\omega))_k$, where $n_k \rightarrow +\infty$ and $(x_k)_k$ is a sequence of points on the surface $S$, has the same limit as the sequence $(\frac{1}{n_k}(\ell(f_c^{n_k}(x_{k,c}))- \ell(x_{k,c})))_k$.
\end{proof}

By Lemma \ref{Lem:rotationsetcomparison}, if $\rho_{erg}(f)$ is contained in $v_1^{\perp}$, then $\rho^{t}_{v_1}(f)=\left\{ 0 \right\}$.

We now prove that, in this case, $f$ does not act hyperbolically on the fine curve graph of $S$.

For any simple closed curve $\alpha$ that is isotopic to $c$, we define $\mathcal{C}_c(\alpha)$ as the number of lifts of $c$ to $S_c$ that a given lift of $\alpha$ to $S_c$ meets. This quantity does not depend on the chosen lift of $\alpha$.

The following lemma is analogous to Lemma 4.5 in \cite{BHMMW}.

\begin{lemma} \label{Lem:distanceestimates}
For any simple closed curve $\alpha$ on $S$ which is isotopic to $c$,
$$\mathcal{C}_c(\alpha)+1 \geq d_{C^{\dagger}}(\alpha,c).$$
\end{lemma}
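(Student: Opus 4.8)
The plan is to mimic the strategy of \cite[Lemma 4.5]{BHMMW}: given a simple closed curve $\alpha$ isotopic to $c$, I want to produce a path in $C^{\dagger}(S)$ from $\alpha$ to $c$ of length at most $\mathcal{C}_c(\alpha)+1$. The natural idea is to realize the intermediate vertices as simple closed curves that are ``pushed'' step by step across the lifts of $c$ that a fixed lift $\tilde\alpha$ of $\alpha$ crosses, decreasing the crossing count by one at each step. First I would fix a lift $\tilde\alpha$ of $\alpha$ to $S_c$; since $\alpha$ is isotopic (hence homologous) to the nonseparating curve $c$, the lift $\tilde\alpha$ is a properly embedded line or a compact curve in $S_c$ meeting finitely many lifts of $c$, and $\mathcal{C}_c(\alpha)$ counts exactly these. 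If $\mathcal{C}_c(\alpha)=0$, then $\tilde\alpha$ is disjoint from all lifts of $c$, so it projects to a curve $\alpha$ disjoint from $c$ in $S$ (or homotopic into a region disjoint from $c$), which gives $d_{C^{\dagger}}(\alpha,c)\le 1$; this is the base case.

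For the inductive step, suppose $\mathcal{C}_c(\alpha)=k\ge 1$. Among the lifts of $c$ crossed by $\tilde\alpha$, choose an ``outermost'' one, i.e.\ a lift $\tilde c'$ such that $\tilde c'$ together with a subarc of $\tilde\alpha$ cuts off a region of $S_c$ whose interior is disjoint from $\tilde\alpha$. I would then use this region to perform a surgery on $\alpha$: replace the subarc of $\alpha$ near $\tilde c'$ by an arc running parallel to the projection of $\tilde c'$, obtaining a new simple closed curve $\alpha'$, still essential, with $\mathcal{C}_c(\alpha')\le k-1$, and which can be taken disjoint from $\alpha$ (the surgery happens in a neighbourhood of the outermost region, which can be chosen thin). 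This produces an edge $\alpha\,\alpha'$ in $C^{\dagger}(S)$, and by induction $d_{C^{\dagger}}(\alpha',c)\le (k-1)+1=k$, whence $d_{C^{\dagger}}(\alpha,c)\le k+1=\mathcal{C}_c(\alpha)+1$.

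The main obstacle I expect is making the surgery step precise at the level of \emph{topology}, not just homology: one must check that the curve $\alpha'$ obtained after the outermost surgery is again a simple closed curve, that it remains essential (non-contractible) in $S$, that it is genuinely disjoint from $\alpha$ (so that the edge in $C^\dagger(S)$ really exists), and that its crossing number with the lifts of $c$ has dropped by exactly the crossings removed in the outermost region. A secondary subtlety is that $\alpha$ need not be in minimal position with respect to $c$, and $\tilde\alpha$ need not be embedded in a convenient way; but since $\alpha$ is a given simple closed curve, $\tilde\alpha$ is automatically embedded in $S_c$, and the outermost-disc/outermost-region argument applies verbatim. One should also note that $\mathcal{C}_c(\alpha)$ is well-defined independently of the chosen lift because the deck group $\langle T\rangle$ acts transitively on lifts of $\alpha$ and permutes the lifts of $c$, and this is the only place where the specific structure of the covering $S_c\to S$ enters.
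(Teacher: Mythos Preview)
Your induction on $\mathcal{C}_c(\alpha)$ and the base case match the paper exactly. The difference is in how the intermediate curve is produced in the inductive step: you propose to surger $\alpha$ across an ``outermost'' lift $\tilde c'$ of $c$, whereas the paper does not modify $\alpha$ at all. Instead it exploits the deck transformation $T$ of the cover $S_c$. The lifts $\alpha_c$ and $T(\alpha_c)$ are disjoint (they are distinct lifts of the same simple curve), and the extremal lifts $c_c$ and $T^{n+1}(c_c)$ of $c$ are disjoint from $\alpha_c$ by the choice of $c_c$ as the leftmost lift meeting $\alpha_c$. These four curves cut out a bounded region $\Sigma'$ in $S_c$ whose interior meets no lift of $\alpha$ and only the $n$ lifts $T(c_c),\dots,T^{n}(c_c)$ of $c$. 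The curve $\beta$ is then the projection of any simple closed curve in $\inte(\Sigma')$ isotopic to a boundary component (hence to a lift of $c$).

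This sidesteps precisely the obstacle you flagged. In the paper's construction, disjointness of $\beta$ from $\alpha$ is automatic because $\beta_c$ sits in a region disjoint from every lift of $\alpha$; no surgery or push-off is required, and the bound $\mathcal{C}_c(\beta)\le n$ is read off from which lifts of $c$ enter $\Sigma'$. Your surgery, by contrast, yields a curve $\alpha'$ that shares an arc with $\alpha$; making it disjoint forces a push-off, and you then have to control how that push-off interacts with the remaining lifts of $c$ and with the other intersection points of $\tilde\alpha$ with the same lift $\tilde c'$ (a single innermost surgery removes only one pair of crossings, not all of them, so $\mathcal{C}_c$ may not drop). This can probably be made rigorous, but the paper's use of the translate $T(\alpha_c)$ gives disjointness and the crossing bound for free.
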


Before proving the lemma, let us see why it allows us to complete the proof in this case. As $\rho^{t}_{v_1}(f)=\left\{ 0 \right\}$, 
$$ \lim_{n \rightarrow +\infty} \frac{1}{n} \mathcal{C}_c(f^{n}(c)) =0.$$
Indeed, otherwise, we would find a sequence of integers $n_k \rightarrow +\infty$ and a sequence of points $(x_k)$ of $c$ such that 
$$\lim_{k \rightarrow +\infty} \frac{1}{n_k}\big(\ell(f_c^{n_k}(x_{k,c}))- \ell(x_{k,c})\big) \neq 0,$$
a contradiction with $\rho^{t}_{v_1}(f)=\left\{ 0 \right\}$.
Lemma \ref{Lem:distanceestimates} implies then that
$$\lim_{n \rightarrow +\infty} \frac{1}{n}d_{C^{\dagger}}(f^{n}(c),c)=0$$
so that $f$ does not act hyperbolically on $C^{\dagger}(S)$.

\begin{proof}[Proof of Lemma \ref{Lem:distanceestimates}]
The proof is similar to the proof of \cite[Lemma 4.5]{BHMMW}. We prove it by induction on $n=\mathcal{C}_c(\alpha)$. 

For $n=0$, the curves $\alpha$ and $c$ have to be disjoint so that $d_{C^{\dagger}}(\alpha,c)=1$.

Suppose that the inequality holds for any curve $\alpha$ with $\mathcal{C}_c(\alpha)=n$.
Fix now a curve $\alpha$ such that $\mathcal{C}_c(\alpha)=n+1$. We will find a curve $\beta$ such that
$d_{C^{\dagger}}(\alpha,\beta)=1$ and $\mathcal{C}_c(\beta)=n$, which will complete the induction.

\begin{figure}
\begin{center}

\tikzset{every picture/.style={line width=0.75pt}} %set default line width to 0.75pt        

\begin{tikzpicture}[x=0.75pt,y=0.75pt,yscale=-1.2,xscale=1.25]
%uncomment if require: \path (0,300); %set diagram left start at 0, and has height of 300

%Shape: Polygon Curved [id:ds3199188562356955] 
\draw  [fill={rgb, 255:red, 0; green, 0; blue, 0 }  ,fill opacity=0.08 ] (501.19,134.97) .. controls (580.99,135.07) and (491.39,74.87) .. (551.19,74.97) .. controls (611.59,74.87) and (521.39,135.27) .. (601.19,134.97) .. controls (606.44,135.04) and (606.32,155.04) .. (601.19,154.97) .. controls (591.19,155.07) and (510.99,154.87) .. (501.19,154.97) .. controls (496.19,155.16) and (496.19,135.04) .. (501.19,134.97) -- cycle ;
%Curve Lines [id:da8114962462371869] 
\draw [draw opacity=0][fill={rgb, 255:red, 255; green, 255; blue, 255 }  ,fill opacity=1 ]   (541.37,95.55) .. controls (549.11,86.89) and (556.78,89.47) .. (561.28,95.55) .. controls (554.94,99.8) and (547.61,100.14) .. (541.37,95.55) -- cycle ;
%Curve Lines [id:da9501712199088193] 
\draw    (536.19,91.51) .. controls (546.05,101.44) and (556.48,101.15) .. (566.19,91.51) ;
%Curve Lines [id:da3595357460706885] 
\draw    (541.37,95.55) .. controls (548.23,87.62) and (556.14,88.62) .. (561.28,95.55) ;

%Shape: Polygon Curved [id:ds060385548702312275] 
\draw  [color={rgb, 255:red, 208; green, 2; blue, 27 }  ,draw opacity=1 ] (601.19,154.97) .. controls (595.94,154.91) and (596.07,134.79) .. (601.19,134.97) .. controls (606.32,135.16) and (606.44,155.04) .. (601.19,154.97) -- cycle ;
%Shape: Polygon Curved [id:ds3894164952372934] 
\draw  [color={rgb, 255:red, 208; green, 2; blue, 27 }  ,draw opacity=1 ] (501.19,154.97) .. controls (495.94,154.91) and (496.07,134.79) .. (501.19,134.97) .. controls (506.32,135.16) and (506.44,155.04) .. (501.19,154.97) -- cycle ;

%Curve Lines [id:da8882021354443399] 
\draw  [dash pattern={on 4.5pt off 4.5pt}]  (181.36,133.91) .. controls (189.91,135) and (192.82,135) .. (201.11,135) ;
%Straight Lines [id:da32834088594839383] 
\draw  [dash pattern={on 4.5pt off 4.5pt}]  (181.95,155) -- (201.11,155) ;
%Straight Lines [id:da9689121087100057] 
\draw  [dash pattern={on 4.5pt off 4.5pt}]  (501.11,155) -- (520.27,155) ;
%Shape: Polygon Curved [id:ds4801825354848821] 
\draw  [fill={rgb, 255:red, 0; green, 0; blue, 0 }  ,fill opacity=0.08 ] (201.11,135) .. controls (280.91,135.1) and (191.31,74.9) .. (251.11,75) .. controls (311.51,74.9) and (221.31,135.3) .. (301.11,135) .. controls (306.36,135.06) and (306.23,155.06) .. (301.11,155) .. controls (291.11,155.1) and (210.91,154.9) .. (201.11,155) .. controls (196.11,155.19) and (196.11,135.06) .. (201.11,135) -- cycle ;
%Curve Lines [id:da13370207260101163] 
\draw [draw opacity=0][fill={rgb, 255:red, 255; green, 255; blue, 255 }  ,fill opacity=1 ]   (241.29,95.58) .. controls (249.03,86.91) and (256.69,89.5) .. (261.2,95.58) .. controls (254.86,99.83) and (247.53,100.16) .. (241.29,95.58) -- cycle ;
%Curve Lines [id:da9094147842308655] 
\draw    (236.11,91.53) .. controls (245.97,101.46) and (256.39,101.18) .. (266.11,91.53) ;
%Curve Lines [id:da9092547778810172] 
\draw    (241.29,95.58) .. controls (248.15,87.65) and (256.06,88.65) .. (261.2,95.58) ;

%Shape: Polygon Curved [id:ds6307118514504293] 
\draw  [color={rgb, 255:red, 208; green, 2; blue, 27 }  ,draw opacity=1 ] (301.11,155) .. controls (295.86,154.94) and (295.98,134.81) .. (301.11,135) .. controls (306.23,135.19) and (306.36,155.06) .. (301.11,155) -- cycle ;
%Shape: Polygon Curved [id:ds5784661606016235] 
\draw  [color={rgb, 255:red, 208; green, 2; blue, 27 }  ,draw opacity=1 ] (201.11,155) .. controls (195.86,154.94) and (195.98,134.81) .. (201.11,135) .. controls (206.23,135.19) and (206.36,155.06) .. (201.11,155) -- cycle ;

%Shape: Polygon Curved [id:ds20900916622111843] 
\draw  [fill={rgb, 255:red, 74; green, 144; blue, 226 }  ,fill opacity=0.3 ] (401.11,135) .. controls (480.91,135.1) and (391.31,74.9) .. (451.11,75) .. controls (511.51,74.9) and (421.31,135.3) .. (501.11,135) .. controls (506.36,135.06) and (506.23,155.06) .. (501.11,155) .. controls (491.11,155.1) and (410.91,154.9) .. (401.11,155) .. controls (396.11,155.19) and (396.11,135.06) .. (401.11,135) -- cycle ;
%Curve Lines [id:da13853871448671318] 
\draw [draw opacity=0][fill={rgb, 255:red, 255; green, 255; blue, 255 }  ,fill opacity=1 ]   (441.29,95.58) .. controls (449.03,86.91) and (456.69,89.5) .. (461.2,95.58) .. controls (454.86,99.83) and (447.53,100.16) .. (441.29,95.58) -- cycle ;
%Curve Lines [id:da5878415374227479] 
\draw    (436.11,91.53) .. controls (445.97,101.46) and (456.39,101.18) .. (466.11,91.53) ;
%Curve Lines [id:da619131720215599] 
\draw    (441.29,95.58) .. controls (448.15,87.65) and (456.06,88.65) .. (461.2,95.58) ;

%Shape: Polygon Curved [id:ds392170084720429] 
\draw  [color={rgb, 255:red, 208; green, 2; blue, 27 }  ,draw opacity=1 ] (401.11,155) .. controls (395.86,154.94) and (395.98,134.81) .. (401.11,135) .. controls (406.23,135.19) and (406.36,155.06) .. (401.11,155) -- cycle ;
%Curve Lines [id:da8426762015425829] 
\draw    (200,160) .. controls (234.13,165.01) and (259.15,165.41) .. (297.06,160.4) ;
\draw [shift={(300,160)}, rotate = 172.19] [fill={rgb, 255:red, 0; green, 0; blue, 0 }  ][line width=0.08]  [draw opacity=0] (8.04,-3.86) -- (0,0) -- (8.04,3.86) -- (5.34,0) -- cycle    ;
%Curve Lines [id:da024488365763420172] 
\draw [color={rgb, 255:red, 65; green, 117; blue, 5 }  ,draw opacity=1 ] [dash pattern={on 4.5pt off 4.5pt}]  (280.11,132.94) .. controls (274.38,131.15) and (275.31,154.69) .. (280.11,154.94) ;
%Straight Lines [id:da44574947416825894] 
\draw  [dash pattern={on 4.5pt off 4.5pt}]  (601.19,154.97) -- (620.36,154.97) ;
%Curve Lines [id:da763385096570865] 
\draw  [dash pattern={on 4.5pt off 4.5pt}]  (601.19,134.97) .. controls (608.54,135.16) and (610.54,135.16) .. (621.28,134.07) ;
%Curve Lines [id:da669734519668312] 
\draw [color={rgb, 255:red, 65; green, 117; blue, 5 }  ,draw opacity=1 ] [dash pattern={on 4.5pt off 4.5pt}]  (480.71,133.14) .. controls (474.98,131.35) and (475.91,154.89) .. (480.71,155.14) ;
%Shape: Polygon Curved [id:ds2922703887399297] 
\draw  [draw opacity=0][fill={rgb, 255:red, 255; green, 255; blue, 255 }  ,fill opacity=1 ] (401.11,135) .. controls (395.89,135.06) and (396.17,154.92) .. (401.11,155) .. controls (403.42,154.96) and (404.81,150.4) .. (404.92,147.63) .. controls (414,146.88) and (413.42,142.96) .. (404.83,141.04) .. controls (404.17,138.15) and (402.92,134.79) .. (401.11,135) -- cycle ;
%Shape: Polygon Curved [id:ds7529591638211229] 
\draw  [draw opacity=0][fill={rgb, 255:red, 0; green, 0; blue, 0 }  ,fill opacity=0.08 ] (404.83,141.04) .. controls (412.17,143.29) and (415.17,145.88) .. (404.92,147.63) .. controls (405.08,145.79) and (405.08,144.04) .. (404.83,141.04) -- cycle ;
%Shape: Polygon Curved [id:ds2672216845425395] 
\draw  [fill={rgb, 255:red, 0; green, 0; blue, 0 }  ,fill opacity=0.08 ] (301.11,135) .. controls (380.91,135.1) and (291.31,74.9) .. (351.11,75) .. controls (411.51,74.9) and (321.31,135.3) .. (401.11,135) .. controls (406.36,135.06) and (406.23,155.06) .. (401.11,155) .. controls (391.11,155.1) and (310.91,154.9) .. (301.11,155) .. controls (296.11,155.19) and (296.11,135.06) .. (301.11,135) -- cycle ;
%Curve Lines [id:da0624175343786354] 
\draw [draw opacity=0][fill={rgb, 255:red, 255; green, 255; blue, 255 }  ,fill opacity=1 ]   (341.29,95.58) .. controls (349.03,86.91) and (356.69,89.5) .. (361.2,95.58) .. controls (354.86,99.83) and (347.53,100.16) .. (341.29,95.58) -- cycle ;
%Curve Lines [id:da9557715939224091] 
\draw    (336.11,91.53) .. controls (345.97,101.46) and (356.39,101.18) .. (366.11,91.53) ;
%Curve Lines [id:da8798347686742946] 
\draw    (341.29,95.58) .. controls (348.15,87.65) and (356.06,88.65) .. (361.2,95.58) ;

%Shape: Polygon Curved [id:ds1756590888486368] 
\draw  [color={rgb, 255:red, 208; green, 2; blue, 27 }  ,draw opacity=1 ] (401.11,155) .. controls (395.86,154.94) and (395.98,134.81) .. (401.11,135) .. controls (406.23,135.19) and (406.36,155.06) .. (401.11,155) -- cycle ;
%Shape: Polygon Curved [id:ds2836696941078688] 
\draw  [color={rgb, 255:red, 208; green, 2; blue, 27 }  ,draw opacity=1 ] (301.11,155) .. controls (295.86,154.94) and (295.98,134.81) .. (301.11,135) .. controls (306.23,135.19) and (306.36,155.06) .. (301.11,155) -- cycle ;

%Curve Lines [id:da7277671902452252] 
\draw [color={rgb, 255:red, 65; green, 117; blue, 5 }  ,draw opacity=1 ]   (280.11,132.94) .. controls (291.15,137.15) and (328.3,150.68) .. (337.11,127.61) .. controls (345.92,104.54) and (323.15,92.99) .. (336,81.61) .. controls (348.85,70.23) and (373.46,79.92) .. (370,91.5) .. controls (366.54,103.08) and (356.08,133) .. (370.22,136.39) .. controls (384.37,139.78) and (412.03,139.1) .. (411.46,145) .. controls (410.9,150.9) and (373.31,146.48) .. (362,143.13) .. controls (350.69,139.77) and (364.49,100.46) .. (367.67,91.5) .. controls (370.85,82.54) and (339.5,80.91) .. (335.56,86.72) .. controls (331.62,92.54) and (339.92,107.31) .. (346.23,124.85) .. controls (352.54,142.38) and (288.54,154.85) .. (280.11,154.94) ;
%Shape: Polygon Curved [id:ds28246590056649645] 
\draw  [draw opacity=0][fill={rgb, 255:red, 255; green, 255; blue, 255 }  ,fill opacity=1 ] (480.71,133.14) .. controls (487.67,134.33) and (493.19,134.9) .. (501.19,134.97) .. controls (502.75,135.13) and (504,137.04) .. (504.5,139.96) .. controls (498.25,139.04) and (492.05,137.1) .. (480.71,133.14) -- cycle ;
%Shape: Polygon Curved [id:ds15230153135001245] 
\draw  [fill={rgb, 255:red, 0; green, 0; blue, 0 }  ,fill opacity=0.08 ] (480.63,133.17) .. controls (487.58,134.36) and (493.11,134.93) .. (501.11,135) .. controls (502.67,135.15) and (503.92,137.07) .. (504.42,139.98) .. controls (498.17,139.07) and (491.96,137.12) .. (480.63,133.17) -- cycle ;
%Shape: Polygon Curved [id:ds7388619085603341] 
\draw  [draw opacity=0][fill={rgb, 255:red, 255; green, 255; blue, 255 }  ,fill opacity=1 ] (480.71,155.14) .. controls (489.17,154.46) and (492.58,153.29) .. (504.5,150.96) .. controls (503.92,152.63) and (503,155.13) .. (501.11,155) .. controls (492.92,154.96) and (489.58,154.96) .. (480.71,155.14) -- cycle ;
%Shape: Polygon Curved [id:ds7842336928315014] 
\draw  [fill={rgb, 255:red, 0; green, 0; blue, 0 }  ,fill opacity=0.08 ] (480.71,155.14) .. controls (489.17,154.46) and (492.58,153.29) .. (504.5,150.96) .. controls (503.92,152.63) and (503,155.13) .. (501.11,155) .. controls (492.92,154.96) and (489.58,154.96) .. (480.71,155.14) -- cycle ;
%Shape: Polygon Curved [id:ds22093893567421063] 
\draw  [color={rgb, 255:red, 208; green, 2; blue, 27 }  ,draw opacity=1 ] (501.11,155) .. controls (495.86,154.94) and (495.98,134.81) .. (501.11,135) .. controls (506.23,135.19) and (506.36,155.06) .. (501.11,155) -- cycle ;
%Curve Lines [id:da5013349635640311] 
\draw [color={rgb, 255:red, 65; green, 117; blue, 5 }  ,draw opacity=1 ]   (480.71,133.14) .. controls (491.75,137.35) and (528.9,150.88) .. (537.71,127.81) .. controls (546.52,104.74) and (523.75,93.19) .. (536.6,81.81) .. controls (549.45,70.43) and (574.06,80.12) .. (570.6,91.7) .. controls (567.14,103.28) and (556.68,133.2) .. (570.82,136.59) .. controls (584.97,139.98) and (612.63,139.3) .. (612.06,145.2) .. controls (611.5,151.1) and (573.91,146.68) .. (562.6,143.32) .. controls (551.29,139.97) and (565.09,100.66) .. (568.27,91.7) .. controls (571.45,82.74) and (540.1,81.11) .. (536.16,86.92) .. controls (532.22,92.74) and (540.52,107.51) .. (546.83,125.05) .. controls (553.14,142.58) and (489.14,155.05) .. (480.71,155.14) ;
%Curve Lines [id:da27861885258382546] 
\draw [color={rgb, 255:red, 189; green, 16; blue, 224 }  ,draw opacity=1 ]   (468.52,126.24) .. controls (463.32,119.04) and (456.7,154.84) .. (462.81,155) ;
%Curve Lines [id:da9766829376195448] 
\draw [color={rgb, 255:red, 189; green, 16; blue, 224 }  ,draw opacity=1 ] [dash pattern={on 4.5pt off 4.5pt}]  (468.52,126.24) .. controls (471.1,128.9) and (465.38,155) .. (462.81,155) ;

% Text Node
\draw (420.57,157.03) node [anchor=north west][inner sep=0.75pt]    {$S_{c}$};
% Text Node
\draw (248.23,167.4) node [anchor=north] [inner sep=0.75pt]    {$T$};
% Text Node
\draw (427.86,131.95) node [anchor=north west][inner sep=0.75pt]  [color={rgb, 255:red, 0; green, 80; blue, 166 }  ,opacity=1 ]  {$\Sigma '$};
% Text Node
\draw (401.43,132.98) node [anchor=south] [inner sep=0.75pt]  [color={rgb, 255:red, 208; green, 2; blue, 27 }  ,opacity=1 ]  {$\alpha _{c}$};
% Text Node
\draw (301.11,131.6) node [anchor=south] [inner sep=0.75pt]  [color={rgb, 255:red, 208; green, 2; blue, 27 }  ,opacity=1 ]  {$T^{-1} \alpha _{c}$};
% Text Node
\draw (342,135) node [anchor=north west][inner sep=0.75pt]  [color={rgb, 255:red, 65; green, 117; blue, 5 }  ,opacity=1 ]  {$c_{c}$};
% Text Node
\draw (550.59,138) node [anchor=north] [inner sep=0.75pt]  [color={rgb, 255:red, 65; green, 117; blue, 5 }  ,opacity=1 ]  {$T^{2} c_{c}$};
% Text Node
\draw (501,130.4) node [anchor=south] [inner sep=0.75pt]  [color={rgb, 255:red, 208; green, 2; blue, 27 }  ,opacity=1 ]  {$T\alpha _{c}$};
% Text Node
\draw (464.45,132.86) node [anchor=south east] [inner sep=0.75pt]  [color={rgb, 255:red, 189; green, 16; blue, 224 }  ,opacity=1 ]  {$\beta $};

\end{tikzpicture}
\caption{\label{FigConsbeta}Construction of the curve $\beta$ in $S_c$ for the proof of Lemma \ref{Lem:distanceestimates}.}
\end{center}
\end{figure}
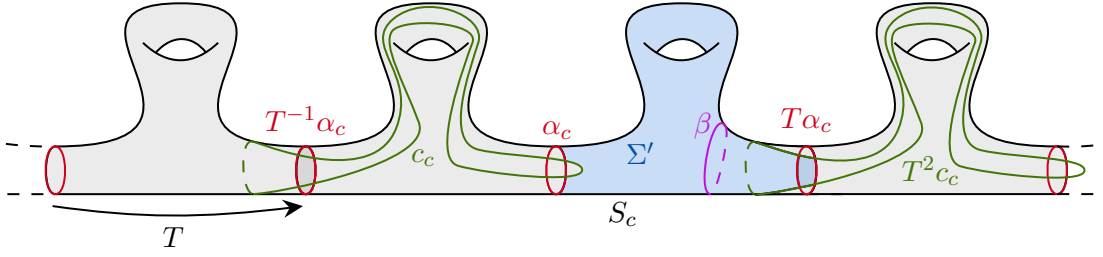

The curve $\beta$ is constructed as follows (see Figure~\ref{FigConsbeta}). Fix a lift $\alpha_c$ of $\alpha$ to $S_c$ and denote by $c_c$ the lift of $c$ which meets $\alpha_c$ but such that no lift of the form $T^{k}(c_c)$, with $k<0$, meets $\alpha$. Hence $T^{n+1}(c_c)$ does not meet $\alpha_c$ either. Now take the connected component $\Sigma$ of $S_c \setminus (c_c \cup \alpha_c)$ which contains $T^{k}(\alpha_c)$ for any $k >0$. Consider now the surface $\Sigma'$ which is the bounded connected component of $\Sigma \setminus (T(\alpha_c) \cup T^{n+1}(c_c))$. This surface has one boundary component which is made of pieces of $\alpha_c$ and $c_c$ and another boundary component which is made of pieces of $T(\alpha_c)$ and $T^{n+1}(c_c)$. The interior of this surface does not meet any lift of $\alpha$ and meets only the lifts $T^k(c_c)$ of $c$, with $1 \leq k \leq n$. Let $\beta_c$ be any closed curve isotopic to  $T(\alpha)$ which belongs to the interior of $\Sigma'$. The projection $\beta$ of $\beta_c$ on $S$ will satisfy the wanted requirements.
\end{proof}

\paragraph{Second case:} the rotation set $\rho_{erg}(f)$ is contained in a hyperplane which is not rational. 

Take a (non-rational) vector $v$ of $H_1(S,\mathbb{R}) $ such that $\rho_{erg}(f) \subset v^{\perp}$. Then, by Claim~\ref{Cla:2definitions}, $\rho_{H_1}(f) \subset v^{\perp}$. Also, fix a euclidean norm $\left\|.\right\|$ on $H_1(S)$.

We will approximate this hyperplane by rational hyperplanes and use transverse rotation sets (as in the first case) to prove that $f$ does not act hyperbolically on the fine curve graph of $S$.

Suppose for a contradiction that $f$ acts hyperbolically on the fine curve graph. By definition, for any element $ c  \in C^{\dagger}(S)$
$$ \lim_{n \rightarrow +\infty} d_{C^{\dagger}}(f^{n}(c),c)= \lambda >0$$
and this constant $\lambda$ does not depend on the chosen $c$.

As the set $\rho_{H_1}(f)$ is bounded, there exists a constant $C_1 >0$ such that, for any $\rho \in \rho_{H_1}(f)$ and any vector $x$ in $H_1(S,\mathbb{R})$,
$$ |\rho \wedge x| \leq C_1 \left\| x \right\|.$$

By Dirichlet's theorem, there exists infinitely many integers $q$ such that there exists an integral vector $v_q \in H_1(S,\mathbb{Z})$ satisfying:
$$ \left\| qv-v_q \right\| \leq \frac{2g}{q^{\frac{1}{2g}}}.$$
For any such integer $q$, we obtain that, for any $\rho \in \rho_{H_1}(f)$
$$ |\rho \wedge v_q| \leq C_1 \frac{2g}{q^{\frac{1}{2g}}}.$$
Fix $q$ large enough so that the right-hand side of this inequality is smaller than $\frac{\lambda}{4}$. Also, taking a smaller vector which is collinear to $v_q$ instead of $v_q$ if necessary, we can suppose that $v_q$ is irreducible. Let $c$ be a simple closed curve of $S$ that represents $v_q$.

Then, taking the same notation as in the first case,
$$ \rho^{t}_{v_q}(f) \subset \left[-\frac{\lambda}{4}, \frac{\lambda}{4}\right].$$ 

This implies that all the limit values of the sequence  $$\left(\frac{1}{n} \mathcal{C}_c\big(f^{n}(c)\big) \right)_n$$
belong to the interval $[0, \frac{\lambda}{2}]$.

Lemma \ref{Lem:distanceestimates} implies then that
$$\lim_{n \rightarrow +\infty} \frac{1}{n}d_{C^{\dagger}}(f^{n}(c),c)\leq \frac{\lambda}{2},$$
a contradiction.

\paragraph{Third case:} no hyperplane contains $\rho_{erg}(f)$ and there exist two orthogonal and complementary rational subspaces $E$ and $F$ of $H_1(S, \mathbb{R})$ such that $\rho_{erg}(f) \subset E \cup F$.

%\begin{lemma} \label{Lem:geometrichomlogycondition}
%There exist two disjoint compact surfaces with boundary $\Sigma_E$ and $\Sigma_F$ contained in $S$ such that, for $A=E \ \text{or} \ F$, for any integral primitive cycle $a$ in $A$, the surface $\Sigma_A$ contains a simple nonseparating closed curve which represents $a$.
%\end{lemma}
%
%\begin{proof}[Very rough idea]
%Prove it first for well-chosen rational subspaces $E'$ and $F'$ with the same dimensions as $E$ and $F$. Prove then that any integral elementary operation on $H_1(S,\mathbb{Z})$ is induced by a homeomorphism of $S$. Find a reference where it is already written down ? (use \cite{MR0474304}).
%\end{proof}
%
%Fix a simple closed curve $\gamma$ which is homotopically nontrivial and which is disjoint from $\Sigma_E$ and $\Sigma_F$. Observe that the curve $\gamma$ has to be separating : otherwise $E \cup F$ would be contained in an hyperplane which is the orthogonal of the homology class of $\gamma$, a contradiction.

Recall that we have fixed an isotopy $(f_t)_{t\in [0,1]}$ between $\Id$ and $f$.
Let $x$ be a periodic point of $f$ of period $q$. We call \emph{geodesic associated to $x$}, and denote $\gamma_x$, the closed geodesic that is freely homotopic to the closed curve $(f_t(x))_{t\in [0,q]}$ (if this path is non contractible). We also denote $\displaystyle r(x) = r\left(\frac 1q \sum_{i=0}^{q-1}\delta_{f^i(x)}\right)$ the homological rotation vector associated to the periodic point $x$.

\begin{claim}
There exist four periodic points $x_E,y_E,x_F,y_F$ of respective periods $q_{x_E}, q_{y_E}, q_{x_F},q_{y_F}$ such that:
\begin{itemize}
\item $r(x_E),r(y_E)\in E\setminus\{0\}$ and $r(x_F),r(y_F)\in F\setminus\{0\}$;
\item $\gamma_{x_E}$ and $\gamma_{y_E}$ are simple and have a single intersection point, and $\gamma_{x_F}$ and $\gamma_{y_F}$ are simple and have a single intersection point;
\item $(\gamma_{x_E}\cup\gamma_{y_E}) \cap (\gamma_{x_F}\cup\gamma_{y_F}) = \emptyset$.
\end{itemize}
\end{claim}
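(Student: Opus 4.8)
The plan is to split the argument into a purely homological part, a topological construction of four model curves, and a realisation step, which is the real content.

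\emph{Homological preliminaries.} Since $\rho_{erg}(f)\subset E\cup F$, each nonzero element of $\rho_{erg}(f)$ lies in exactly one of $E$, $F$, and as $\rho_{erg}(f)$ spans no hyperplane, $\rho_{erg}(f)\cap E$ spans $E$ (otherwise it lies in a hyperplane $H$ of $E$ and $\rho_{erg}(f)\subset H\oplus F$ is a hyperplane), and similarly $\rho_{erg}(f)\cap F$ spans $F$. I invoke the decomposition $\rho_{erg}(f)\subset\mathcal L\cup\bigcup_i V_i$ of Proposition~\ref{Prop:decomposition}, noting by Lemma~\ref{LemNotContainedHyp} that $\mathcal L=\emptyset$ and each $V_i$ is symplectic. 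Each $V_i$ is contained in $E$ or in $F$: otherwise pick nonzero $v\in V_i\cap E\cap\rho_{erg}(f)$ and $w\in V_i\cap F\cap\rho_{erg}(f)$; by Proposition~\ref{Prop:decomposition}(5) they are joined by a $\sim$-chain inside $\rho_{erg}(f)\cap V_i\subset E\cup F$, which must contain a step from a nonzero vector of $E$ to a nonzero vector of $F$ — but such a pair is neither collinear (as $E\cap F=\{0\}$) nor non-orthogonal (as $E\perp F$), contradicting the definition of $\sim$. Hence $E$ and $F$ are each a direct sum of some of the $V_i$; fix $V_E\subset E$ and $V_F\subset F$ among them. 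Both are rational and symplectic of dimension $\ge2$, and by Proposition~\ref{Prop:decomposition}(4), $K_E:=\overline{\rho_{erg}(f)\cap V_E}$ is a convex set containing $0$, with nonempty interior in $V_E$, whose periodically realised points are dense, and similarly $K_F\subset V_F$.

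\emph{Model curves.} Let $\mathcal C_E\subset V_E$ be the open convex cone of directions pointing from $0$ into $\inte(K_E)$, and $\mathcal C_F\subset V_F$ the analogue. Since rational $2$-planes are dense in the Grassmannian while symplectic $2$-planes meeting $\mathcal C_E$ openly form a nonempty open subset, I may choose a rational symplectic $2$-plane $P_E\subset V_E$ with $P_E\cap\mathcal C_E$ a nonempty open cone of $P_E$, and likewise $P_F\subset V_F$ with $P_F\cap\mathcal C_F$ nonempty open. Automatically $P_E\perp P_F$ (as $V_E\subset E\perp F\supset V_F$), so by the classical change-of-coordinates principle these rational symplectic planes are carried by disjoint one-holed subtori $R_E,R_F\subset S$, the inclusions inducing isomorphisms $H_1(R_E;\R)\xrightarrow{\sim}P_E$ and $H_1(R_F;\R)\xrightarrow{\sim}P_F$. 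Take for $\delta_1,\delta_2$ the two standard generating simple closed curves of $R_E$, so $i(\delta_1,\delta_2)=1$, $[\delta_1]\wedge[\delta_2]=\pm1$, and $\{[\delta_1],[\delta_2]\}$ is a basis of the lattice $P_E\cap H_1(S;\Z)$ (on which $\wedge$ is therefore unimodular). Since a homeomorphism of $R_E$ realises any element of $\mathrm{SL}_2(\Z)$ on that lattice, and since it has a basis $(u_1,u_2)$ with $u_1\wedge u_2=\pm1$ and $u_1,u_2\in P_E\cap\mathcal C_E$ — fix $u_1$ primitive in the open cone, the set $\{v\in P_E:u_1\wedge v=1\}$ is a line parallel to $u_1$ carrying a lattice coset, hence eventually enters the cone, and any such lattice point forms a basis with $u_1$ — I may replace $\delta_1,\delta_2$ by images of a homeomorphism of $R_E$ so that $[\delta_1]=u_1$, $[\delta_2]=u_2$ point into $K_E$; similarly for $\delta_3,\delta_4\subset R_F$ with classes pointing into $K_F$. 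Because $R_E\cap R_F=\emptyset$, we get $i(\delta_i,\delta_j)=0$ for $i\in\{1,2\}$, $j\in\{3,4\}$.

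\emph{Realisation — the main obstacle.} It remains to find, for suitable $q,q'\ge1$, periodic points $x_E,y_E,x_F,y_F$ with $r(x_E)=\tfrac1q[\delta_1]$, $r(y_E)=\tfrac1q[\delta_2]$, $r(x_F)=\tfrac1{q'}[\delta_3]$, $r(y_F)=\tfrac1{q'}[\delta_4]$ (these lie in $\inte(K_E)$, resp. $\inte(K_F)$, for $q,q'$ large, since the $[\delta_k]$ point into these bodies), and whose isotopy trajectories $(f_t(\cdot))$ are \emph{freely homotopic} to $\delta_1,\delta_2,\delta_3,\delta_4$. Granting this, $\gamma_{x_E},\gamma_{y_E}$ are the (simple) closed geodesics in the classes of $\delta_1,\delta_2$ and meet in $i(\delta_1,\delta_2)=1$ point, the same holds on the $F$ side, the $E$-geodesics are disjoint from the $F$-geodesics because $i(\delta_i,\delta_j)=0$, and the rotation vectors lie in $V_E\setminus\{0\}\subset E\setminus\{0\}$ or $V_F\setminus\{0\}\subset F\setminus\{0\}$ as $[\delta_k]\ne0$. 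The difficulty is exactly to control the free homotopy type of the trajectory and not merely its homology class: a closed geodesic with primitive homology can be non-simple, so homological data alone does not suffice. I would obtain the realisation from the machinery for the homotopical (ergodic) rotation set of \cite{guiheneuf2022homotopic} — in the spirit of the uses of \cite[Theorems E and F]{guiheneuf2022homotopic} elsewhere in this paper, and of the arguments of Llibre--MacKay \cite{MR1101087} and Pollicott \cite{MR1094554} — applied not to $f$ on $S$ (whose global ergodic rotation set may have empty interior here) but in a form localised to $V_E$ (resp. $V_F$): Proposition~\ref{Prop:decomposition}(4) guarantees that the rotational behaviour of $f$ along $V_E$ has a genuinely two-dimensional, periodically realised, interior, which is precisely the hypothesis such a realisation statement needs in order to produce a periodic orbit whose trajectory is a prescribed simple closed curve carried by the handle $R_E$, with prescribed rotation vector inside $\inte(K_E)$. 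Making this localisation precise — in particular ensuring the trajectory is homotopic to the chosen $\delta_k$ rather than to some other, possibly non-simple, curve in the same homology class — is the one genuinely delicate point; everything else is the bookkeeping above.
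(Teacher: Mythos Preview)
Your homological preliminaries are correct and match the paper's. The genuine gap is exactly where you flag it: the realisation step. You want a theorem of the form ``given a simple closed curve $\delta$ whose homology class lies in $\inte(K_E)$, there exists a periodic point whose isotopy trajectory is freely homotopic to $\delta$'', and you do not have one. Theorems~E and~F of \cite{guiheneuf2022homotopic} do not provide this: they only allow you to perform surgery on \emph{existing} periodic-orbit trajectories (splitting a trajectory at a self-intersection, or concatenating two intersecting trajectories), producing new periodic orbits whose trajectories have \emph{those specific} free homotopy types. They do not let you prescribe an arbitrary target curve in advance. Your appeal to a ``localised'' version of this machinery, or to Llibre--MacKay/Pollicott style arguments, is not a proof: the latter yield a pseudo-Anosov relative to a finite set, not periodic orbits in prescribed free homotopy classes.

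The paper's proof runs in the opposite direction and thereby sidesteps this obstacle entirely. It starts from periodic points $x_E,x_F$ that exist by Proposition~\ref{Prop:decomposition}(4), and then \emph{simplifies} their associated geodesics step by step: if $\gamma_{x_E}$ has a self-intersection, Theorem~E produces a periodic point whose trajectory is one of the two sub-loops, with strictly fewer self-intersections and rotation vector still in $E\setminus\{0\}$ (using $\rho_{erg}(f)\subset E\cup F$ to stay in $E$); iterate until $\gamma_{x_E}$ is simple. Disjointness of $\gamma_{x_E}$ and $\gamma_{x_F}$ then comes directly from Theorem~F (an intersection would force a rotation vector in neither $E$ nor $F$). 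For $y_E$, the paper starts with any periodic point whose rotation vector wedges nontrivially with $r(x_E)$, then uses Theorems~E and~F iteratively to cut down $|\gamma_{x_E}\cap\gamma_{y_E}|$ to $1$ and remove self-intersections of $\gamma_{y_E}$. Every step is a concrete surgery on trajectories that already exist; no moment requires realising a curve chosen in advance. Your strategy of ``model curves first, realise later'' is more elegant if it worked, but as written the realisation is an assertion, not an argument.
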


\begin{proof}
By Lemma~\ref{LemNotContainedHyp}, the set $\mathcal{L}$ appearing in Proposition~\ref{Prop:decomposition} is empty, and each of the subspaces $V_i$ which appear in the decomposition given by Proposition~\ref{Prop:decomposition} is a symplectic subspace of $H_1(S,\mathbb{R})$. This implies that $E$ and $F$ each contain a finite (and nonempty) unions of the $V_i$'s. Moreover, by point 4. of Proposition~\ref{Prop:decomposition}, the subspaces $E$ and $F$ each contain the rotation vector of a periodic orbit: there exist periodic points $x_E$ and $x_F$ of respective periods $q_E$ and $q_F$ such that $r(x_E)\in E\setminus\{0\}$ and $r(x_F)\in F\setminus\{0\}$. 

Let us first show that the closed paths $(f_t(x_E))_{t\in[0,q_E]}$ and $(f_t(x_F))_{t\in[0,q_F]}$ can be supposed to be homotopic to simple closed geodesics. 

Suppose that this is not the case and that for example $(f_t(x_E))_{t\in[0,q_E]}$ is homotopic to a closed geodesic $(\gamma(s))_{s\in \Sp^1}$ with a transverse self-intersection. Let $s_1,s_2\in\Sp^1$ be two successive intersection times: $\gamma(s_1) = \gamma(s_2)$ and $\gamma|_{[s_1,s_2]}$ is simple. We denote $\tilde S$ the universal cover of $S$, and $\tilde f$ the canonical lift of $f$ to $\tilde S$ (\emph{i.e.} the one commuting with deck transformations). 
Let $\tilde\gamma$ be a lift of $\gamma$ to $\tilde S$, and $T_1$ and $T_2$ be the deck transformations of $\tilde S$ associated to respectively $\tilde \gamma|_{[s_1,s_2]}$ and $\tilde\gamma|_{[s_2,s_1]}$. By \cite[Theorem E]{guiheneuf2022homotopic}, there exists $m>0$ such that a dense subset (containing the extremal points) of the segment $[1/m[T_1]_{H_1}, 1/m[T_2]_{H_1}]$ is realised by rotation vectors of periodic orbits of $f$. Note that the line $\langle r(x_E) \rangle=\langle [T_1]_{H_1}+[T_2]_{H_1} \rangle$ meets $[1/m[T_1]_{H_1}, 1/m[T_2]_{H_1}]$. This implies (because $\rho_{erg}(f)\subset E\cup F$) that $[1/m[T_1]_{H_1}, 1/m[T_2]_{H_1}]\subset E$. If $[T_1]_{H_1}\neq 0$ (in other words, if $T_1$ is not a commutator), then there exists a periodic point $x'_E$ realising the rotation vector $1/m[T_1]_{H_1}\in E\setminus\{0\}$. Otherwise, there exists a periodic point $y_E$ realising the rotation vector $1/m[T_2]_{H_1}\in E\setminus\{0\}$, and we can apply the same reasoning as before to the point $y_E$. Iterating this process, we finally find a periodic point $x'_E$ of period $q'_E$, realising a rotation vector in $E\setminus\{0\}$ and such that the closed path $(f_t(x'_E))_{t\in[0,q'_E]}$ is homotopic to simple closed geodesic (the process terminates because the initial closed geodesic associated to $x_E$ has a finite number of self-intersections). 

Denote by $\gamma_{x_E}$ and $\gamma_{x_F}$ the simple closed geodesics homotopic to respectively $(f_t(x_E))_{t\in[0,q_{x_E}]}$ and $(f_t(x_F))_{t\in[0,q_{x_F}]}$. By \cite[Theorem F]{guiheneuf2022homotopic}, these geodesics $\gamma_{x_E}$ and $\gamma_{x_F}$ do not intersect. Indeed, otherwise, denote $\tilde \gamma_{x_E}$ and $\tilde\gamma_{x_F}$ lifts of $\gamma_{x_E}$ and $\gamma_{x_F}$ to $\tilde S$ that intersect, and $T_E$ and $T_F$ deck transformations of $\tilde S$ associated to these lifts of closed geodesics. Then by \cite[Theorem F]{guiheneuf2022homotopic} there would exist $m>0$ such that $1/(2m)\big([T_E]_{H_1}+[T_F]_{H_1}\big) \in \rho_{erg}(f)$, which contradicts the fact that $\rho_{erg}(f)\subset E\cup F$.
\bigskip

Let us now explain how to get the periodic point $y_E$, the obtaining of $y_F$ being identical.

As $r(x_E)\in E\setminus\{0\}$, and as $E$ is symplectic (Lemma~\ref{LemNotContainedHyp}), there exists $r_0\in \rho_{erg}(f)\cap E$ with $r_0\wedge r(x_E)\neq 0$. By Proposition~\ref{Prop:decomposition}, this rotation vector $r_0$ is accumulated by rotation vectors in $E$ of periodic orbits: there exists a periodic point $y_E$ such that $r(y_E)\wedge r(x_E)\neq 0$ and $r(y_E) \in E$. 

Let us first show that one can suppose that the cardinality of $\gamma_{x_E}\cap\gamma_{y_E}$ is 1. 
if this is not the case, we parametrize the closed geodesics $\gamma_{x_E}$ and $\gamma_{y_E}$ by $\Sp^1$. Then there exists $s_{x,1},s_{x,2},s_{y,1}, s_{y,2}\in\Sp^1$ such that $s_{y,1}$ and $s_{y,2}$ are two successive intersection times of $\gamma_{y_E}$ with $\gamma_{x_E}$ such that these intersections have the same orientation, in other words:
\begin{itemize}
\item $\gamma_{x_E}(s_{x,1}) = \gamma_{y_E}(s_{y,1})$ and $\gamma_{x_E}(s_{x,2}) = \gamma_{y_E}(s_{y,2})$;
\item $\gamma_{y_E}|_{(s_{y,1},s_{y,2})} \cap \gamma_{x_E} = \emptyset$;
\item $\gamma_{y_E}$ crosses $\gamma_{x_E}$ either from left to right at both $s_{y,1}$ and $s_{y,2}$, or from right to left at both $s_{y,1}$ and $s_{y,2}$.
\end{itemize} 
The last property comes from the fact that $r(x_E)\wedge r(y_E)\neq 0$.

By \cite[Theorem F]{guiheneuf2022homotopic}, there is a periodic point $y'_E$ of period $q_{y'_E}$ such that its closed path under the isotopy $(f_t(y'_E))_{t\in [0,q_{y'_E}]}$ is homotopic to the closed curve $\gamma_{y_E}|_{[s_{y,1},s_{y,1}+1]}\gamma_{x_E}|_{[s_{x,1},s_{x,1}+1]}$. This closed path has a transverse self-intersection, so by \cite[Theorem E]{guiheneuf2022homotopic} there is a periodic orbit $y''_E$ of period $q_{y''_E}$ such that its closed path under the isotopy $(f_t(y''_E))_{t\in [0,q_{y''_E}]}$ is homotopic to the closed curve $\gamma_{y_E}|_{[s_{y,1},s_{y,2}]}\gamma_{x_E}|_{[s_{x,2},s_{x,1}+1]}$. By the hypothesis made on the orientation of crossings of $\gamma_{y_E}$ and $\gamma_{x_E}$ at $s_{y,1}$ and $s_{y,2}$, this closed curve intersects $\gamma_{x_E}$ exactly once. 

Finally, if the closed geodesic $\gamma_{y''_E}$ is not simple, let us denote $\overline{s_x}, \overline{s_y}\in\Sp^1$ and $\overline{s_{y,-1}}\neq \overline{s_{y,1}}\in\Sp^1$ such that:
\begin{itemize}
\item $\gamma_{x_E}(\overline{s_x}) = \gamma_{y''_E}(\overline{s_y})$;
\item $\gamma_{y''_E}(\overline{s_{y,-1}}) = \gamma_{y''_E}(\overline{s_{y,1}})$;
\item $\overline{s_y}\in [\overline{s_{y,-1}},\overline{s_{y,1}}]$.
\end{itemize}
To find these times, note that the first condition determines uniquely  $\overline{s_y}\in\Sp^1$, and choose $\overline{s_{y,1}}$ the smallest $s\ge \overline{s_y}$ at which $\gamma_{y''_E}$ intersects itself transversally.

Applying \cite[Theorem E]{guiheneuf2022homotopic} again, we get a periodic point $y'''_E$ of $f$ such that $\gamma_{y'''_E}$ intersects $\gamma_{x_E}$ once and has a number of self-intersections strictly smaller than the one of $\gamma_{y''_E}$. 
Iterating this process if necessary, we get a periodic point $y'''_E$ of $f$ such that $\gamma_{y'''_E}$ intersects $\gamma_{x_E}$ once and is simple.
In particular, this implies that $r(x_E)\wedge r(y'''_E)\neq 0$, and so that $r(y'''_E)\in E\setminus\{0\}$. 

The last point of the claim is obtained identically to the proof we have already made of the fact that $\gamma_{x_E}$ and $\gamma_{x_F}$ do not intersect (this is a direct consequence of \cite[Theorem F]{guiheneuf2022homotopic}).
\end{proof}

This is a classical fact (\emph{e.g.} \cite[1.3.3]{Farb-Margalit}) that all pairs of geodesics intersecting only once are homeomorphic\footnote{Note that on the contrary, there are pairs of disjoint geodesics that are not homeomorphic, for example if the first pair is bounding (\emph{i.e.} the union of the geodesics separates the surface) and the other is not.}. This allows to get a  closed geodesic $\gamma$ separating the geodesics $\gamma_{x_E}$ and $\gamma_{y_E}$ from all the other geodesics of $S$ that do not intersect $\gamma_{x_E}\cup\gamma_{y_E}$. In particular, it separates $\gamma_{x_E}$ and $\gamma_{y_E}$ from $\gamma_{x_F}$ and $\gamma_{y_F}$.

\begin{remark}
By using the proofs of the claim and of Lemma~\ref{LemNotContainedHyp}, it is possible to show that, when $\rho_{erg}(f)$ is not contained in any hyperplane, each subspace $V_i$ appearing in the decomposition given by Proposition \ref{Prop:decomposition} has a symplectic basis consisting of vectors with integral coefficients. By \cite{MR0474304}, every symplectic automorphism of $H_1(S,\mathbb{Z})$ is induced by a diffeomorphism of $S$. This implies that, for any $i$, there exists a subsurface $S_i \subset S$ with boundary such that $H_1(S_i,\mathbb{R})=V_i$. Moreover, the surfaces $S_i$ can be chosen to be pairwise disjoint.
\end{remark}

We will use a covering map involving $\gamma$ so that we can define an analogue of a transverse rotation set associated to $\gamma$, similarly to what we did in case 1.

Let us describe how to obtain such a covering map. An example of such covers is given in Figure~\ref{Fig2Covers}. Denote by $S_E$ and $S_F$ the two connected components of $S \setminus \gamma$, where $S_E$ contains $\gamma_{x_E}$ and $S_F$ contains $\gamma_{x_F}$.

To simplify notations, denote $\alpha=\gamma_{x_E}$ and $\beta=\gamma_{x_F}$; they are respectively contained in $S_E$ and $S_F$ and collinear to nontrivial rotation vectors in $\rho_{erg}(f)$. Cut the surface $S$ along those two simple closed curves to obtain a surface $\mathcal{S}$ with four boundary components $\alpha_{1}$, $\alpha_{2}$ (those two first curves correspond to the closed curve $\alpha$ of $S$), $\beta_1$, $\beta_2$. Take two copies $\mathcal{S}_1$ and $\mathcal{S}_2$ of $\mathcal{S}$. For $i=1,2$ denote by $\alpha_{1,i}$,$\alpha_{2,i}$,$\beta_{1,i}$,$\beta_{2,i}$ for $\mathcal{S}_i$ the boundary components which correspond respectively to $\alpha_1$, $\alpha_2$, $\beta_1$, $\beta_2$. Now, glue $\alpha_{1,1}$ to $\alpha_{2,2}$, $\alpha_{1,2}$ to $\alpha_{2,1}$, $\beta_{1,1}$ to $\beta_{2,2}$, $\beta_{1,2}$ to $\beta_{2,1}$, in such a way that corresponding points of $\alpha$ or $\beta$ are glued together, to obtain a closed surface $\hat{S}$. The map $\hat{S} \rightarrow S$ is a degree $2$ covering map. This map can be also obtained as the covering map corresponding to the kernel of the map
$$\begin{array}{rcl}
H_1(S,\Z) &  \longrightarrow & \Z/2\Z \\
h & \longmapsto & h \wedge (a+b)\mod 2,
\end{array}$$
where $a$ and $b$ respectively represent the homology classes of $\alpha$ and $\beta$.

\begin{figure}
\begin{center}

\tikzset{every picture/.style={line width=0.9pt}} %set default line width to 0.75pt        

\begin{tikzpicture}[x=0.75pt,y=0.75pt,yscale=-1.2,xscale=1.3]
%uncomment if require: \path (0,363); %set diagram left start at 0, and has height of 363

%Shape: Polygon Curved [id:ds6743204952424859] 
\draw  [fill={rgb, 255:red, 0; green, 0; blue, 0 }  ,fill opacity=0.08 ] (359.04,49.13) .. controls (359.09,44.13) and (378.99,44.76) .. (379.04,48.5) .. controls (379.3,57.39) and (399.09,56.87) .. (399.04,49.21) .. controls (399.2,44.63) and (419.09,44.42) .. (419.04,49.21) .. controls (418.96,83.19) and (393.04,83.39) .. (349.04,83.5) .. controls (344.09,83.45) and (344.51,63.5) .. (349.04,63.5) .. controls (359.58,63.5) and (359.2,57.92) .. (359.04,49.13) -- cycle ;
%Shape: Polygon Curved [id:ds2502149240908438] 
\draw  [fill={rgb, 255:red, 0; green, 0; blue, 0 }  ,fill opacity=0.08 ] (180.5,49.13) .. controls (180.55,44.13) and (200.45,44.76) .. (200.5,48.5) .. controls (200.76,57.39) and (220.55,56.87) .. (220.5,49.21) .. controls (220.66,44.63) and (240.55,44.42) .. (240.5,49.21) .. controls (240.42,83.19) and (214.5,83.39) .. (170.5,83.5) .. controls (165.55,83.45) and (165.97,63.5) .. (170.5,63.5) .. controls (181.04,63.5) and (180.66,57.92) .. (180.5,49.13) -- cycle ;
%Shape: Polygon Curved [id:ds013087419054642835] 
\draw  [fill={rgb, 255:red, 0; green, 0; blue, 0 }  ,fill opacity=0.08 ] (532.25,162.54) .. controls (532.15,167.42) and (512.23,167.42) .. (512.25,162.46) .. controls (512.38,152.59) and (492.29,152.21) .. (492.25,162.37) .. controls (492.06,167.25) and (472.23,167.16) .. (472.25,162.29) .. controls (472.29,152.46) and (463.33,142.42) .. (452.33,142.21) .. controls (441.33,141.99) and (432.38,152.44) .. (432.25,162.12) .. controls (432.15,167) and (412.15,166.91) .. (412.25,162.04) .. controls (412.29,152.2) and (392.11,152.09) .. (392.25,161.95) .. controls (392.15,166.91) and (372.15,166.91) .. (372.25,161.87) .. controls (372.29,151.92) and (373.75,121.71) .. (452.42,122.21) .. controls (531.08,122.7) and (532.29,152.5) .. (532.25,162.54) -- cycle ;
%Shape: Polygon Curved [id:ds1331665426800741] 
\draw  [fill={rgb, 255:red, 0; green, 0; blue, 0 }  ,fill opacity=0.08 ] (372.25,161.95) .. controls (372.33,157.07) and (392.25,156.99) .. (392.25,161.95) .. controls (392.16,171.81) and (412.25,172.12) .. (412.25,161.95) .. controls (412.42,157.07) and (432.25,157.07) .. (432.25,161.95) .. controls (432.25,171.78) and (441.25,181.78) .. (452.25,181.95) .. controls (463.25,182.12) and (472.16,171.63) .. (472.25,161.95) .. controls (472.33,157.07) and (492.33,157.07) .. (492.25,161.95) .. controls (492.25,171.78) and (512.43,171.81) .. (512.25,161.95) .. controls (512.33,156.99) and (532.33,156.91) .. (532.25,161.95) .. controls (532.25,171.9) and (530.92,202.12) .. (452.25,201.95) .. controls (373.58,201.78) and (372.25,172) .. (372.25,161.95) -- cycle ;
%Curve Lines [id:da07410374023779054] 
\draw [color={rgb, 255:red, 0; green, 0; blue, 0 }  ,draw opacity=1 ][fill={rgb, 255:red, 0; green, 0; blue, 0 }  ,fill opacity=0.08 ]   (354.6,293.5) .. controls (334.4,292.9) and (324,302.9) .. (304.6,303.5) .. controls (256.8,304.5) and (256,234.1) .. (304.6,233.5) .. controls (316.62,233.39) and (335.57,243.53) .. (354.6,243.5) .. controls (384.4,242.5) and (394.4,233.7) .. (404.6,233.5) .. controls (452.45,234.07) and (455.6,304.5) .. (404.6,303.5) .. controls (394.8,303.7) and (376,293.7) .. (354.6,293.5) -- cycle ;
%Curve Lines [id:da07497051782288355] 
\draw [color={rgb, 255:red, 144; green, 19; blue, 254 }  ,draw opacity=1 ] [dash pattern={on 4.5pt off 4.5pt}]  (354.6,243.5) .. controls (361.89,243.36) and (359.89,293.07) .. (354.6,293.5) ;
%Curve Lines [id:da7306569281795102] 
\draw [draw opacity=0][fill={rgb, 255:red, 255; green, 255; blue, 255 }  ,fill opacity=1 ]   (299.78,270.21) .. controls (307.52,261.54) and (315.18,264.13) .. (319.69,270.21) .. controls (313.35,274.46) and (306.02,274.79) .. (299.78,270.21) -- cycle ;
%Curve Lines [id:da9406807774399584] 
\draw    (294.6,266.16) .. controls (304.46,276.09) and (314.89,275.8) .. (324.6,266.16) ;
%Curve Lines [id:da6394748789429574] 
\draw    (299.78,270.21) .. controls (306.64,262.28) and (314.55,263.28) .. (319.69,270.21) ;

%Curve Lines [id:da6578598010458002] 
\draw [draw opacity=0][fill={rgb, 255:red, 255; green, 255; blue, 255 }  ,fill opacity=1 ]   (389.78,270.31) .. controls (397.52,261.64) and (405.18,264.23) .. (409.69,270.31) .. controls (403.35,274.56) and (396.02,274.89) .. (389.78,270.31) -- cycle ;
%Curve Lines [id:da7696850717396986] 
\draw    (384.6,266.26) .. controls (394.46,276.19) and (404.89,275.9) .. (414.6,266.26) ;
%Curve Lines [id:da38952524132724564] 
\draw    (389.78,270.31) .. controls (396.64,262.38) and (404.55,263.38) .. (409.69,270.31) ;

%Curve Lines [id:da20715597461271773] 
\draw [color={rgb, 255:red, 144; green, 19; blue, 254 }  ,draw opacity=1 ]   (354.6,243.5) .. controls (346.74,243.64) and (347.6,293.36) .. (354.6,293.5) ;
\draw [shift={(349.08,271.11)}, rotate = 269.68] [fill={rgb, 255:red, 144; green, 19; blue, 254 }  ,fill opacity=1 ][line width=0.08]  [draw opacity=0] (7.14,-3.43) -- (0,0) -- (7.14,3.43) -- (4.74,0) -- cycle    ;
%Curve Lines [id:da5230356806076287] 
\draw [color={rgb, 255:red, 208; green, 2; blue, 27 }  ,draw opacity=1 ]   (308.46,273.57) .. controls (301.6,273.93) and (299.31,304.21) .. (307.6,303.07) ;
\draw [shift={(302.29,291.33)}, rotate = 273.71] [fill={rgb, 255:red, 208; green, 2; blue, 27 }  ,fill opacity=1 ][line width=0.08]  [draw opacity=0] (7.14,-3.43) -- (0,0) -- (7.14,3.43) -- (4.74,0) -- cycle    ;
%Curve Lines [id:da03291753199137726] 
\draw [color={rgb, 255:red, 208; green, 2; blue, 27 }  ,draw opacity=1 ] [dash pattern={on 4.5pt off 4.5pt}]  (308.46,273.57) .. controls (313.86,272.97) and (316.2,301.8) .. (307.6,303.07) ;
%Curve Lines [id:da6844376443627789] 
\draw [color={rgb, 255:red, 65; green, 117; blue, 5 }  ,draw opacity=1 ] [dash pattern={on 4.5pt off 4.5pt}]  (401.17,273.29) .. controls (395.31,274.43) and (397.74,304.71) .. (406.31,303.43) ;
%Curve Lines [id:da8098098838627398] 
\draw [color={rgb, 255:red, 65; green, 117; blue, 5 }  ,draw opacity=1 ]   (401.17,273.29) .. controls (409.74,271.71) and (413.17,301.57) .. (406.31,303.43) ;
\draw [shift={(409.83,289.78)}, rotate = 261.55] [fill={rgb, 255:red, 65; green, 117; blue, 5 }  ,fill opacity=1 ][line width=0.08]  [draw opacity=0] (7.14,-3.43) -- (0,0) -- (7.14,3.43) -- (4.74,0) -- cycle    ;
%Shape: Polygon Curved [id:ds9221751537108683] 
\draw  [fill={rgb, 255:red, 0; green, 0; blue, 0 }  ,fill opacity=0.08 ] (162,167.5) .. controls (162.08,162.63) and (182,162.54) .. (182,167.5) .. controls (181.91,177.36) and (202,177.67) .. (202,167.5) .. controls (202.17,162.63) and (222,162.63) .. (222,167.5) .. controls (222,177.33) and (231,187.33) .. (242,187.5) .. controls (253,187.67) and (261.91,177.18) .. (262,167.5) .. controls (262.08,162.63) and (282.08,162.63) .. (282,167.5) .. controls (282,177.33) and (302.18,177.36) .. (302,167.5) .. controls (302.08,162.54) and (322.08,162.46) .. (322,167.5) .. controls (322,177.45) and (320.67,207.67) .. (242,207.5) .. controls (163.33,207.33) and (162,177.55) .. (162,167.5) -- cycle ;
%Curve Lines [id:da12094555206037172] 
\draw [color={rgb, 255:red, 208; green, 2; blue, 27 }  ,draw opacity=1 ]   (162,167.5) .. controls (162.08,172.71) and (182.17,172.54) .. (182,167.5) ;
\draw [shift={(167.99,171.06)}, rotate = 2.01] [fill={rgb, 255:red, 208; green, 2; blue, 27 }  ,fill opacity=1 ][line width=0.08]  [draw opacity=0] (7.14,-3.43) -- (0,0) -- (7.14,3.43) -- (4.74,0) -- cycle    ;
%Curve Lines [id:da19663660378422176] 
\draw [color={rgb, 255:red, 65; green, 117; blue, 5 }  ,draw opacity=1 ]   (302,167.5) .. controls (302.08,162.54) and (322.17,162.63) .. (322,167.5) ;
%Curve Lines [id:da019462146613861475] 
\draw [color={rgb, 255:red, 144; green, 19; blue, 254 }  ,draw opacity=1 ]   (242,187.5) .. controls (235.17,187.58) and (236.17,207.42) .. (242,207.5) ;
\draw [shift={(237.45,200.02)}, rotate = 269.71] [fill={rgb, 255:red, 144; green, 19; blue, 254 }  ,fill opacity=1 ][line width=0.08]  [draw opacity=0] (7.14,-3.43) -- (0,0) -- (7.14,3.43) -- (4.74,0) -- cycle    ;
%Curve Lines [id:da2670749683955297] 
\draw [color={rgb, 255:red, 144; green, 19; blue, 254 }  ,draw opacity=1 ] [dash pattern={on 4.5pt off 4.5pt}]  (242,187.5) .. controls (247.17,187.25) and (247.67,207.42) .. (242,207.5) ;
%Curve Lines [id:da5312813423819381] 
\draw [color={rgb, 255:red, 65; green, 117; blue, 5 }  ,draw opacity=1 ]   (262,167.5) .. controls (262.08,162.54) and (282.17,162.63) .. (282,167.5) ;
%Curve Lines [id:da5669686991185378] 
\draw [color={rgb, 255:red, 208; green, 2; blue, 27 }  ,draw opacity=1 ]   (202,167.5) .. controls (202.08,162.54) and (222.17,162.63) .. (222,167.5) ;
%Curve Lines [id:da6592410110282202] 
\draw [color={rgb, 255:red, 208; green, 2; blue, 27 }  ,draw opacity=1 ]   (162,167.5) .. controls (162.08,162.54) and (182.17,162.63) .. (182,167.5) ;
%Curve Lines [id:da7959261492998062] 
\draw [color={rgb, 255:red, 208; green, 2; blue, 27 }  ,draw opacity=1 ]   (202,167.5) .. controls (202.08,172.71) and (222.17,172.54) .. (222,167.5) ;
\draw [shift={(214.62,171.22)}, rotate = 180.66] [fill={rgb, 255:red, 208; green, 2; blue, 27 }  ,fill opacity=1 ][line width=0.08]  [draw opacity=0] (7.14,-3.43) -- (0,0) -- (7.14,3.43) -- (4.74,0) -- cycle    ;
%Curve Lines [id:da20990661032776936] 
\draw [color={rgb, 255:red, 65; green, 117; blue, 5 }  ,draw opacity=1 ]   (262,167.5) .. controls (262.08,172.71) and (282.17,172.54) .. (282,167.5) ;
\draw [shift={(267.99,171.06)}, rotate = 2.01] [fill={rgb, 255:red, 65; green, 117; blue, 5 }  ,fill opacity=1 ][line width=0.08]  [draw opacity=0] (7.14,-3.43) -- (0,0) -- (7.14,3.43) -- (4.74,0) -- cycle    ;
%Curve Lines [id:da7918724443179574] 
\draw [color={rgb, 255:red, 65; green, 117; blue, 5 }  ,draw opacity=1 ]   (302,167.5) .. controls (302.08,172.71) and (322.17,172.54) .. (322,167.5) ;
\draw [shift={(314.62,171.22)}, rotate = 180.66] [fill={rgb, 255:red, 65; green, 117; blue, 5 }  ,fill opacity=1 ][line width=0.08]  [draw opacity=0] (7.14,-3.43) -- (0,0) -- (7.14,3.43) -- (4.74,0) -- cycle    ;
%Shape: Polygon Curved [id:ds5143194844194046] 
\draw  [fill={rgb, 255:red, 0; green, 0; blue, 0 }  ,fill opacity=0.08 ] (322,152.84) .. controls (321.9,157.72) and (301.98,157.72) .. (302,152.76) .. controls (302.13,142.89) and (282.04,142.51) .. (282,152.67) .. controls (281.81,157.55) and (261.98,157.46) .. (262,152.59) .. controls (262.04,142.76) and (253.08,132.72) .. (242.08,132.51) .. controls (231.08,132.29) and (222.13,142.74) .. (222,152.42) .. controls (221.9,157.3) and (201.9,157.21) .. (202,152.34) .. controls (202.04,142.5) and (181.86,142.39) .. (182,152.25) .. controls (181.9,157.21) and (161.9,157.21) .. (162,152.17) .. controls (162.04,142.22) and (163.5,112.01) .. (242.17,112.51) .. controls (320.83,113) and (322.04,142.8) .. (322,152.84) -- cycle ;
%Curve Lines [id:da2400156279283746] 
\draw [color={rgb, 255:red, 65; green, 117; blue, 5 }  ,draw opacity=1 ] [dash pattern={on 4.5pt off 4.5pt}]  (322,152.84) .. controls (321.94,147.63) and (301.85,147.71) .. (302,152.76) ;
%Curve Lines [id:da19566201971007657] 
\draw [color={rgb, 255:red, 208; green, 2; blue, 27 }  ,draw opacity=1 ]   (182,152.25) .. controls (181.9,157.21) and (161.81,157.04) .. (162,152.17) ;
\draw [shift={(169.36,155.78)}, rotate = 359.5] [fill={rgb, 255:red, 208; green, 2; blue, 27 }  ,fill opacity=1 ][line width=0.08]  [draw opacity=0] (7.14,-3.43) -- (0,0) -- (7.14,3.43) -- (4.74,0) -- cycle    ;
%Curve Lines [id:da9350055254951877] 
\draw [color={rgb, 255:red, 144; green, 19; blue, 254 }  ,draw opacity=1 ] [dash pattern={on 4.5pt off 4.5pt}]  (242.08,132.51) .. controls (248.92,132.45) and (248,112.61) .. (242.17,112.51) ;
%Curve Lines [id:da7682744369359304] 
\draw [color={rgb, 255:red, 144; green, 19; blue, 254 }  ,draw opacity=1 ]   (242.08,132.51) .. controls (236.92,132.73) and (236.5,112.57) .. (242.17,112.51) ;
\draw [shift={(238.37,126.43)}, rotate = 267.66] [fill={rgb, 255:red, 144; green, 19; blue, 254 }  ,fill opacity=1 ][line width=0.08]  [draw opacity=0] (7.14,-3.43) -- (0,0) -- (7.14,3.43) -- (4.74,0) -- cycle    ;
%Curve Lines [id:da03953025106206809] 
\draw [color={rgb, 255:red, 208; green, 2; blue, 27 }  ,draw opacity=1 ]   (222,152.42) .. controls (221.9,157.38) and (201.81,157.21) .. (202,152.34) ;
\draw [shift={(216,155.8)}, rotate = 178.21] [fill={rgb, 255:red, 208; green, 2; blue, 27 }  ,fill opacity=1 ][line width=0.08]  [draw opacity=0] (7.14,-3.43) -- (0,0) -- (7.14,3.43) -- (4.74,0) -- cycle    ;
%Curve Lines [id:da22857308662841502] 
\draw [color={rgb, 255:red, 65; green, 117; blue, 5 }  ,draw opacity=1 ]   (282,152.67) .. controls (281.9,157.63) and (261.81,157.46) .. (262,152.59) ;
\draw [shift={(269.36,156.2)}, rotate = 359.5] [fill={rgb, 255:red, 65; green, 117; blue, 5 }  ,fill opacity=1 ][line width=0.08]  [draw opacity=0] (7.14,-3.43) -- (0,0) -- (7.14,3.43) -- (4.74,0) -- cycle    ;
%Curve Lines [id:da4791862711570528] 
\draw [color={rgb, 255:red, 65; green, 117; blue, 5 }  ,draw opacity=1 ]   (322,152.84) .. controls (321.9,157.8) and (301.81,157.63) .. (302,152.76) ;
\draw [shift={(316,156.22)}, rotate = 178.21] [fill={rgb, 255:red, 65; green, 117; blue, 5 }  ,fill opacity=1 ][line width=0.08]  [draw opacity=0] (7.14,-3.43) -- (0,0) -- (7.14,3.43) -- (4.74,0) -- cycle    ;
%Curve Lines [id:da26947562082588605] 
\draw [color={rgb, 255:red, 65; green, 117; blue, 5 }  ,draw opacity=1 ] [dash pattern={on 4.5pt off 4.5pt}]  (282,152.67) .. controls (281.94,147.46) and (261.85,147.55) .. (262,152.59) ;
%Curve Lines [id:da9986601300624384] 
\draw [color={rgb, 255:red, 208; green, 2; blue, 27 }  ,draw opacity=1 ] [dash pattern={on 4.5pt off 4.5pt}]  (222,152.42) .. controls (221.94,147.21) and (201.85,147.3) .. (202,152.34) ;
%Curve Lines [id:da3905273147185807] 
\draw [color={rgb, 255:red, 208; green, 2; blue, 27 }  ,draw opacity=1 ] [dash pattern={on 4.5pt off 4.5pt}]  (182,152.25) .. controls (181.94,147.05) and (161.86,147.13) .. (162,152.17) ;
%Curve Lines [id:da10235408690409276] 
\draw [color={rgb, 255:red, 208; green, 2; blue, 27 }  ,draw opacity=1 ]   (372.25,161.95) .. controls (372.33,167.16) and (392.42,166.99) .. (392.25,161.95) ;
\draw [shift={(378.24,165.51)}, rotate = 2.01] [fill={rgb, 255:red, 208; green, 2; blue, 27 }  ,fill opacity=1 ][line width=0.08]  [draw opacity=0] (7.14,-3.43) -- (0,0) -- (7.14,3.43) -- (4.74,0) -- cycle    ;
%Curve Lines [id:da2847644973182841] 
\draw [color={rgb, 255:red, 65; green, 117; blue, 5 }  ,draw opacity=1 ]   (512.25,161.95) .. controls (512.33,156.99) and (532.42,157.07) .. (532.25,161.95) ;
%Curve Lines [id:da5021116654989085] 
\draw [color={rgb, 255:red, 144; green, 19; blue, 254 }  ,draw opacity=1 ]   (452.25,181.95) .. controls (445.42,182.03) and (446.42,201.87) .. (452.25,201.95) ;
\draw [shift={(447.7,194.47)}, rotate = 269.71] [fill={rgb, 255:red, 144; green, 19; blue, 254 }  ,fill opacity=1 ][line width=0.08]  [draw opacity=0] (7.14,-3.43) -- (0,0) -- (7.14,3.43) -- (4.74,0) -- cycle    ;
%Curve Lines [id:da4881054063506386] 
\draw [color={rgb, 255:red, 144; green, 19; blue, 254 }  ,draw opacity=1 ] [dash pattern={on 4.5pt off 4.5pt}]  (452.25,181.95) .. controls (457.42,181.7) and (457.92,201.87) .. (452.25,201.95) ;
%Curve Lines [id:da9218645981643787] 
\draw [color={rgb, 255:red, 65; green, 117; blue, 5 }  ,draw opacity=1 ]   (472.25,161.95) .. controls (472.33,156.99) and (492.42,157.07) .. (492.25,161.95) ;
%Curve Lines [id:da36483217477649277] 
\draw [color={rgb, 255:red, 208; green, 2; blue, 27 }  ,draw opacity=1 ]   (412.25,161.95) .. controls (412.33,156.99) and (432.42,157.07) .. (432.25,161.95) ;
%Curve Lines [id:da697160938766215] 
\draw [color={rgb, 255:red, 208; green, 2; blue, 27 }  ,draw opacity=1 ]   (372.25,161.95) .. controls (372.33,156.99) and (392.42,157.07) .. (392.25,161.95) ;
%Curve Lines [id:da22678886883970362] 
\draw [color={rgb, 255:red, 208; green, 2; blue, 27 }  ,draw opacity=1 ]   (412.25,161.95) .. controls (412.33,167.16) and (432.42,166.99) .. (432.25,161.95) ;
\draw [shift={(424.87,165.67)}, rotate = 180.66] [fill={rgb, 255:red, 208; green, 2; blue, 27 }  ,fill opacity=1 ][line width=0.08]  [draw opacity=0] (7.14,-3.43) -- (0,0) -- (7.14,3.43) -- (4.74,0) -- cycle    ;
%Curve Lines [id:da2695440830572595] 
\draw [color={rgb, 255:red, 65; green, 117; blue, 5 }  ,draw opacity=1 ]   (472.25,161.95) .. controls (472.55,167.58) and (492.47,167.24) .. (492.25,161.95) ;
\draw [shift={(478.21,165.73)}, rotate = 2.26] [fill={rgb, 255:red, 65; green, 117; blue, 5 }  ,fill opacity=1 ][line width=0.08]  [draw opacity=0] (7.14,-3.43) -- (0,0) -- (7.14,3.43) -- (4.74,0) -- cycle    ;
%Curve Lines [id:da14508144871977013] 
\draw [color={rgb, 255:red, 65; green, 117; blue, 5 }  ,draw opacity=1 ]   (512.25,161.95) .. controls (512.38,168.08) and (532.38,167.24) .. (532.25,161.95) ;
\draw [shift={(524.87,166.06)}, rotate = 180.01] [fill={rgb, 255:red, 65; green, 117; blue, 5 }  ,fill opacity=1 ][line width=0.08]  [draw opacity=0] (7.14,-3.43) -- (0,0) -- (7.14,3.43) -- (4.74,0) -- cycle    ;
%Curve Lines [id:da7449393566575742] 
\draw [color={rgb, 255:red, 208; green, 2; blue, 27 }  ,draw opacity=1 ] [dash pattern={on 4.5pt off 4.5pt}]  (432.25,162.12) .. controls (432.19,156.91) and (412.1,157) .. (412.25,162.04) ;
%Shape: Polygon Curved [id:ds4516080564131949] 
\draw  [fill={rgb, 255:red, 0; green, 0; blue, 0 }  ,fill opacity=0.08 ] (340.5,49.8) .. controls (340.4,54.68) and (320.48,54.68) .. (320.5,49.72) .. controls (320.63,39.86) and (300.54,39.47) .. (300.5,49.64) .. controls (300.31,54.51) and (280.48,54.43) .. (280.5,49.55) .. controls (280.54,39.72) and (271.58,29.68) .. (260.58,29.47) .. controls (249.58,29.26) and (240.63,39.7) .. (240.5,49.38) .. controls (240.4,54.26) and (220.4,54.18) .. (220.5,49.3) .. controls (220.54,39.47) and (200.36,39.35) .. (200.5,49.22) .. controls (200.4,54.18) and (180.4,54.17) .. (180.5,49.13) .. controls (180.54,39.18) and (182,8.97) .. (260.67,9.47) .. controls (339.33,9.97) and (340.54,39.76) .. (340.5,49.8) -- cycle ;
%Curve Lines [id:da9402929091122424] 
\draw [color={rgb, 255:red, 208; green, 2; blue, 27 }  ,draw opacity=1 ]   (180.5,49.21) .. controls (180.58,54.42) and (200.67,54.25) .. (200.5,49.21) ;
\draw [shift={(186.49,52.77)}, rotate = 2.01] [fill={rgb, 255:red, 208; green, 2; blue, 27 }  ,fill opacity=1 ][line width=0.08]  [draw opacity=0] (7.14,-3.43) -- (0,0) -- (7.14,3.43) -- (4.74,0) -- cycle    ;
%Curve Lines [id:da9746177472964317] 
\draw [color={rgb, 255:red, 65; green, 117; blue, 5 }  ,draw opacity=1 ]   (320.5,49.21) .. controls (320.58,44.25) and (340.67,44.34) .. (340.5,49.21) ;
%Curve Lines [id:da7564753487466708] 
\draw [color={rgb, 255:red, 65; green, 117; blue, 5 }  ,draw opacity=1 ]   (280.5,49.21) .. controls (280.58,44.25) and (300.67,44.34) .. (300.5,49.21) ;
%Curve Lines [id:da538075337091779] 
\draw [color={rgb, 255:red, 208; green, 2; blue, 27 }  ,draw opacity=1 ]   (220.5,49.21) .. controls (220.58,44.25) and (240.67,44.34) .. (240.5,49.21) ;
%Curve Lines [id:da8629853638632502] 
\draw [color={rgb, 255:red, 208; green, 2; blue, 27 }  ,draw opacity=1 ]   (180.5,49.21) .. controls (180.58,44.25) and (200.67,44.34) .. (200.5,49.21) ;
%Curve Lines [id:da24882799516190823] 
\draw [color={rgb, 255:red, 208; green, 2; blue, 27 }  ,draw opacity=1 ]   (220.5,49.21) .. controls (220.58,54.42) and (240.67,54.25) .. (240.5,49.21) ;
\draw [shift={(233.12,52.94)}, rotate = 180.66] [fill={rgb, 255:red, 208; green, 2; blue, 27 }  ,fill opacity=1 ][line width=0.08]  [draw opacity=0] (7.14,-3.43) -- (0,0) -- (7.14,3.43) -- (4.74,0) -- cycle    ;
%Curve Lines [id:da4476028453752646] 
\draw [color={rgb, 255:red, 65; green, 117; blue, 5 }  ,draw opacity=1 ]   (280.5,49.21) .. controls (280.8,54.84) and (300.72,54.5) .. (300.5,49.21) ;
\draw [shift={(286.46,53)}, rotate = 2.26] [fill={rgb, 255:red, 65; green, 117; blue, 5 }  ,fill opacity=1 ][line width=0.08]  [draw opacity=0] (7.14,-3.43) -- (0,0) -- (7.14,3.43) -- (4.74,0) -- cycle    ;
%Curve Lines [id:da6812265237673992] 
\draw [color={rgb, 255:red, 65; green, 117; blue, 5 }  ,draw opacity=1 ]   (320.5,49.21) .. controls (320.63,55.34) and (340.63,54.5) .. (340.5,49.21) ;
\draw [shift={(333.12,53.32)}, rotate = 180.01] [fill={rgb, 255:red, 65; green, 117; blue, 5 }  ,fill opacity=1 ][line width=0.08]  [draw opacity=0] (7.14,-3.43) -- (0,0) -- (7.14,3.43) -- (4.74,0) -- cycle    ;
%Curve Lines [id:da8005852794318354] 
\draw [color={rgb, 255:red, 208; green, 2; blue, 27 }  ,draw opacity=1 ] [dash pattern={on 4.5pt off 4.5pt}]  (240.5,49.38) .. controls (240.44,44.18) and (220.35,44.26) .. (220.5,49.3) ;
%Shape: Polygon Curved [id:ds23420203907629866] 
\draw  [fill={rgb, 255:red, 0; green, 0; blue, 0 }  ,fill opacity=0.08 ] (280.5,49.13) .. controls (280.55,44.13) and (300.45,44.76) .. (300.5,48.5) .. controls (300.76,57.39) and (320.55,56.87) .. (320.5,49.21) .. controls (320.66,44.63) and (340.55,44.42) .. (340.5,49.21) .. controls (340.73,58.58) and (341.04,63.5) .. (350.5,63.5) .. controls (355.65,63.35) and (355.81,83.5) .. (350.5,83.5) .. controls (295.81,83.35) and (280.73,73.35) .. (280.5,49.13) -- cycle ;
%Curve Lines [id:da7783567222088755] 
\draw [color={rgb, 255:red, 144; green, 19; blue, 254 }  ,draw opacity=1 ]   (171.44,63.5) .. controls (164.6,63.58) and (165.6,83.42) .. (171.44,83.5) ;
\draw [shift={(166.96,69.43)}, rotate = 91.41] [fill={rgb, 255:red, 144; green, 19; blue, 254 }  ,fill opacity=1 ][line width=0.08]  [draw opacity=0] (7.14,-3.43) -- (0,0) -- (7.14,3.43) -- (4.74,0) -- cycle    ;
%Curve Lines [id:da9045306953782151] 
\draw [color={rgb, 255:red, 144; green, 19; blue, 254 }  ,draw opacity=1 ] [dash pattern={on 4.5pt off 4.5pt}]  (171.44,63.5) .. controls (176.6,63.25) and (177.1,83.42) .. (171.44,83.5) ;
%Curve Lines [id:da22057319004939047] 
\draw [color={rgb, 255:red, 144; green, 19; blue, 254 }  ,draw opacity=1 ]   (349.97,63.5) .. controls (343.14,63.58) and (344.14,83.42) .. (349.97,83.5) ;
\draw [shift={(345.5,69.43)}, rotate = 91.41] [fill={rgb, 255:red, 144; green, 19; blue, 254 }  ,fill opacity=1 ][line width=0.08]  [draw opacity=0] (7.14,-3.43) -- (0,0) -- (7.14,3.43) -- (4.74,0) -- cycle    ;
%Curve Lines [id:da15096400779707353] 
\draw [color={rgb, 255:red, 144; green, 19; blue, 254 }  ,draw opacity=1 ] [dash pattern={on 4.5pt off 4.5pt}]  (349.97,63.5) .. controls (355.14,63.25) and (355.64,83.42) .. (349.97,83.5) ;
%Shape: Polygon Curved [id:ds19756337918745426] 
\draw  [fill={rgb, 255:red, 0; green, 0; blue, 0 }  ,fill opacity=0.08 ] (519.04,49.8) .. controls (518.93,54.68) and (499.02,54.68) .. (499.04,49.72) .. controls (499.17,39.86) and (479.08,39.47) .. (479.04,49.64) .. controls (478.85,54.51) and (459.02,54.43) .. (459.04,49.55) .. controls (459.08,39.72) and (450.12,29.68) .. (439.12,29.47) .. controls (428.12,29.26) and (419.17,39.7) .. (419.04,49.38) .. controls (418.93,54.26) and (398.93,54.18) .. (399.04,49.3) .. controls (399.08,39.47) and (378.9,39.35) .. (379.04,49.22) .. controls (378.93,54.18) and (358.93,54.17) .. (359.04,49.13) .. controls (359.08,39.18) and (360.54,8.97) .. (439.21,9.47) .. controls (517.87,9.97) and (519.08,39.76) .. (519.04,49.8) -- cycle ;
%Curve Lines [id:da5629396972294796] 
\draw [color={rgb, 255:red, 208; green, 2; blue, 27 }  ,draw opacity=1 ]   (359.04,49.21) .. controls (359.12,54.42) and (379.2,54.25) .. (379.04,49.21) ;
\draw [shift={(365.03,52.77)}, rotate = 2.01] [fill={rgb, 255:red, 208; green, 2; blue, 27 }  ,fill opacity=1 ][line width=0.08]  [draw opacity=0] (7.14,-3.43) -- (0,0) -- (7.14,3.43) -- (4.74,0) -- cycle    ;
%Curve Lines [id:da10331664648839056] 
\draw [color={rgb, 255:red, 65; green, 117; blue, 5 }  ,draw opacity=1 ]   (499.04,49.21) .. controls (499.12,44.25) and (519.2,44.34) .. (519.04,49.21) ;
%Curve Lines [id:da04218803508400504] 
\draw [color={rgb, 255:red, 65; green, 117; blue, 5 }  ,draw opacity=1 ]   (459.04,49.21) .. controls (459.12,44.25) and (479.2,44.34) .. (479.04,49.21) ;
%Curve Lines [id:da04210241779552382] 
\draw [color={rgb, 255:red, 208; green, 2; blue, 27 }  ,draw opacity=1 ]   (399.04,49.21) .. controls (399.12,44.25) and (419.2,44.34) .. (419.04,49.21) ;
%Curve Lines [id:da08096800751364919] 
\draw [color={rgb, 255:red, 208; green, 2; blue, 27 }  ,draw opacity=1 ]   (359.04,49.21) .. controls (359.12,44.25) and (379.2,44.34) .. (379.04,49.21) ;
%Curve Lines [id:da8270272368752773] 
\draw [color={rgb, 255:red, 208; green, 2; blue, 27 }  ,draw opacity=1 ]   (399.04,49.21) .. controls (399.12,54.42) and (419.2,54.25) .. (419.04,49.21) ;
\draw [shift={(411.66,52.94)}, rotate = 180.66] [fill={rgb, 255:red, 208; green, 2; blue, 27 }  ,fill opacity=1 ][line width=0.08]  [draw opacity=0] (7.14,-3.43) -- (0,0) -- (7.14,3.43) -- (4.74,0) -- cycle    ;
%Curve Lines [id:da6189719154323681] 
\draw [color={rgb, 255:red, 65; green, 117; blue, 5 }  ,draw opacity=1 ]   (459.04,49.21) .. controls (459.34,54.84) and (479.25,54.5) .. (479.04,49.21) ;
\draw [shift={(465,53)}, rotate = 2.26] [fill={rgb, 255:red, 65; green, 117; blue, 5 }  ,fill opacity=1 ][line width=0.08]  [draw opacity=0] (7.14,-3.43) -- (0,0) -- (7.14,3.43) -- (4.74,0) -- cycle    ;
%Curve Lines [id:da0017024829041054623] 
\draw [color={rgb, 255:red, 65; green, 117; blue, 5 }  ,draw opacity=1 ]   (499.04,49.21) .. controls (499.17,55.34) and (519.17,54.5) .. (519.04,49.21) ;
\draw [shift={(511.66,53.32)}, rotate = 180.01] [fill={rgb, 255:red, 65; green, 117; blue, 5 }  ,fill opacity=1 ][line width=0.08]  [draw opacity=0] (7.14,-3.43) -- (0,0) -- (7.14,3.43) -- (4.74,0) -- cycle    ;
%Curve Lines [id:da6167053134380985] 
\draw [color={rgb, 255:red, 208; green, 2; blue, 27 }  ,draw opacity=1 ] [dash pattern={on 4.5pt off 4.5pt}]  (419.04,49.38) .. controls (418.98,44.18) and (398.89,44.26) .. (399.04,49.3) ;
%Shape: Polygon Curved [id:ds1447780146242963] 
\draw  [fill={rgb, 255:red, 0; green, 0; blue, 0 }  ,fill opacity=0.08 ] (459.04,49.13) .. controls (459.09,44.13) and (478.99,44.76) .. (479.04,48.5) .. controls (479.3,57.39) and (499.09,56.87) .. (499.04,49.21) .. controls (499.2,44.63) and (519.09,44.42) .. (519.04,49.21) .. controls (519.27,58.58) and (519.58,63.5) .. (529.04,63.5) .. controls (534.19,63.35) and (534.35,83.5) .. (529.04,83.5) .. controls (474.35,83.35) and (459.27,73.35) .. (459.04,49.13) -- cycle ;
%Curve Lines [id:da23811806476754005] 
\draw [color={rgb, 255:red, 144; green, 19; blue, 254 }  ,draw opacity=1 ]   (528.51,63.5) .. controls (521.68,63.58) and (522.68,83.42) .. (528.51,83.5) ;
\draw [shift={(524.04,69.43)}, rotate = 91.41] [fill={rgb, 255:red, 144; green, 19; blue, 254 }  ,fill opacity=1 ][line width=0.08]  [draw opacity=0] (7.14,-3.43) -- (0,0) -- (7.14,3.43) -- (4.74,0) -- cycle    ;
%Curve Lines [id:da6354106243418594] 
\draw [color={rgb, 255:red, 144; green, 19; blue, 254 }  ,draw opacity=1 ] [dash pattern={on 4.5pt off 4.5pt}]  (528.51,63.5) .. controls (533.68,63.25) and (534.18,83.42) .. (528.51,83.5) ;
%Curve Lines [id:da9610033088626612] 
\draw [color={rgb, 255:red, 144; green, 19; blue, 254 }  ,draw opacity=1 ]   (440.39,9.46) .. controls (433.56,9.55) and (434.56,29.38) .. (440.39,29.46) ;
\draw [shift={(435.84,21.98)}, rotate = 269.71] [fill={rgb, 255:red, 144; green, 19; blue, 254 }  ,fill opacity=1 ][line width=0.08]  [draw opacity=0] (7.14,-3.43) -- (0,0) -- (7.14,3.43) -- (4.74,0) -- cycle    ;
%Curve Lines [id:da5519096512440641] 
\draw [color={rgb, 255:red, 144; green, 19; blue, 254 }  ,draw opacity=1 ] [dash pattern={on 4.5pt off 4.5pt}]  (440.39,9.46) .. controls (445.56,9.21) and (446.06,29.38) .. (440.39,29.46) ;
%Curve Lines [id:da1359645140349579] 
\draw  [dash pattern={on 4.5pt off 4.5pt}]  (171.44,63.5) .. controls (167.5,63.68) and (164.59,64.41) .. (155.32,62.59) ;
%Curve Lines [id:da1554721534970862] 
\draw  [dash pattern={on 4.5pt off 4.5pt}]  (170.5,83.5) .. controls (164.77,83.5) and (158.51,83.66) .. (154.83,83.21) ;
%Curve Lines [id:da1363447210547003] 
\draw  [dash pattern={on 4.5pt off 4.5pt}]  (544.77,83.32) .. controls (542.23,83.32) and (538.77,83.14) .. (528.51,83.5) ;
%Curve Lines [id:da5246857921092796] 
\draw  [dash pattern={on 4.5pt off 4.5pt}]  (546.95,62.23) .. controls (540.59,63.5) and (536.77,63.5) .. (528.51,63.5) ;
%Curve Lines [id:da1161131765386505] 
\draw [color={rgb, 255:red, 144; green, 19; blue, 254 }  ,draw opacity=1 ]   (260.52,9.55) .. controls (253.69,9.63) and (254.69,29.47) .. (260.52,29.55) ;
\draw [shift={(255.97,22.06)}, rotate = 269.71] [fill={rgb, 255:red, 144; green, 19; blue, 254 }  ,fill opacity=1 ][line width=0.08]  [draw opacity=0] (7.14,-3.43) -- (0,0) -- (7.14,3.43) -- (4.74,0) -- cycle    ;
%Curve Lines [id:da7572907131171276] 
\draw [color={rgb, 255:red, 144; green, 19; blue, 254 }  ,draw opacity=1 ] [dash pattern={on 4.5pt off 4.5pt}]  (260.52,9.55) .. controls (265.69,9.3) and (266.19,29.47) .. (260.52,29.55) ;
%Straight Lines [id:da5222541852588497] 
\draw    (352,91.25) -- (352,122) ;
\draw [shift={(352,125)}, rotate = 270] [fill={rgb, 255:red, 0; green, 0; blue, 0 }  ][line width=0.08]  [draw opacity=0] (8.93,-4.29) -- (0,0) -- (8.93,4.29) -- (5.93,0) -- cycle    ;
%Straight Lines [id:da6495625323021881] 
\draw    (352,189.25) -- (352,220) ;
\draw [shift={(352,223)}, rotate = 270] [fill={rgb, 255:red, 0; green, 0; blue, 0 }  ][line width=0.08]  [draw opacity=0] (8.93,-4.29) -- (0,0) -- (8.93,4.29) -- (5.93,0) -- cycle    ;

% Text Node
\draw (301.05,287.93) node [anchor=east] [inner sep=0.75pt]  [color={rgb, 255:red, 208; green, 2; blue, 27 }  ,opacity=1 ]  {$\alpha $};
% Text Node
\draw (410.6,284.16) node [anchor=west] [inner sep=0.75pt]  [color={rgb, 255:red, 65; green, 117; blue, 5 }  ,opacity=1 ]  {$\beta $};
% Text Node
\draw (346.79,267.55) node [anchor=east] [inner sep=0.75pt]  [color={rgb, 255:red, 144; green, 19; blue, 254 }  ,opacity=1 ]  {$\gamma $};
% Text Node
\draw (373.05,238) node [anchor=south east] [inner sep=0.75pt]    {$S$};
% Text Node
\draw (326.45,210.48) node [anchor=south] [inner sep=0.75pt]    {$S_{1}$};
% Text Node
\draw (166.22,173.62) node [anchor=north west][inner sep=0.75pt]  [font=\small,color={rgb, 255:red, 208; green, 2; blue, 27 }  ,opacity=1 ]  {$\alpha _{1,1}$};
% Text Node
\draw (213.08,175.65) node [anchor=north] [inner sep=0.75pt]  [font=\small,color={rgb, 255:red, 208; green, 2; blue, 27 }  ,opacity=1 ]  {$\alpha _{2,1}$};
% Text Node
\draw (211.67,144.53) node [anchor=south] [inner sep=0.75pt]  [font=\small,color={rgb, 255:red, 208; green, 2; blue, 27 }  ,opacity=1 ]  {$\alpha _{1,2}$};
% Text Node
\draw (167.75,143.28) node [anchor=south west] [inner sep=0.75pt]  [font=\small,color={rgb, 255:red, 208; green, 2; blue, 27 }  ,opacity=1 ]  {$\alpha _{2,2}$};
% Text Node
\draw (327.75,129.73) node [anchor=south] [inner sep=0.75pt]    {$S_{2}$};
% Text Node
\draw (273.33,171.95) node [anchor=north] [inner sep=0.75pt]  [font=\small,color={rgb, 255:red, 65; green, 117; blue, 5 }  ,opacity=1 ]  {$\beta _{1,1}$};
% Text Node
\draw (318.04,172.2) node [anchor=north east] [inner sep=0.75pt]  [font=\small,color={rgb, 255:red, 65; green, 117; blue, 5 }  ,opacity=1 ]  {$\beta _{2,1}$};
% Text Node
\draw (272.96,142.78) node [anchor=south] [inner sep=0.75pt]  [font=\small,color={rgb, 255:red, 65; green, 117; blue, 5 }  ,opacity=1 ]  {$\beta _{2,2}$};
% Text Node
\draw (318.04,144.53) node [anchor=south east] [inner sep=0.75pt]  [font=\small,color={rgb, 255:red, 65; green, 117; blue, 5 }  ,opacity=1 ]  {$\beta _{1,2}$};
% Text Node
\draw (234.86,193.83) node [anchor=east] [inner sep=0.75pt]  [color={rgb, 255:red, 144; green, 19; blue, 254 }  ,opacity=1 ]  {$\hat{\gamma }$};
% Text Node
\draw (382.7,133.93) node [anchor=south] [inner sep=0.75pt]    {$\hat{S}$};
% Text Node
\draw (421.58,155.57) node [anchor=south] [inner sep=0.75pt]  [font=\small,color={rgb, 255:red, 208; green, 2; blue, 27 }  ,opacity=1 ]  {$\alpha _{1}$};
% Text Node
\draw (378,154.32) node [anchor=south west] [inner sep=0.75pt]  [font=\small,color={rgb, 255:red, 208; green, 2; blue, 27 }  ,opacity=1 ]  {$\alpha _{2}$};
% Text Node
\draw (482.88,154.15) node [anchor=south] [inner sep=0.75pt]  [font=\small,color={rgb, 255:red, 65; green, 117; blue, 5 }  ,opacity=1 ]  {$\beta _{2}$};
% Text Node
\draw (527.62,155.23) node [anchor=south east] [inner sep=0.75pt]  [font=\small,color={rgb, 255:red, 65; green, 117; blue, 5 }  ,opacity=1 ]  {$\beta _{1}$};
% Text Node
\draw (445.78,187.95) node [anchor=east] [inner sep=0.75pt]  [color={rgb, 255:red, 144; green, 19; blue, 254 }  ,opacity=1 ]  {$\hat{\gamma }$};
% Text Node
\draw (175.76,70.5) node [anchor=west] [inner sep=0.75pt]  [color={rgb, 255:red, 144; green, 19; blue, 254 }  ,opacity=1 ]  {$T_{1}^{-1} \gamma _{1}$};
% Text Node
\draw (343.5,69.5) node [anchor=east] [inner sep=0.75pt]  [color={rgb, 255:red, 144; green, 19; blue, 254 }  ,opacity=1 ]  {$\gamma _{1}$};
% Text Node
\draw (349,28.6) node [anchor=south] [inner sep=0.75pt]    {$S_{\gamma }$};
% Text Node
\draw (522.17,70.17) node [anchor=east] [inner sep=0.75pt]  [color={rgb, 255:red, 144; green, 19; blue, 254 }  ,opacity=1 ]  {$T_{1} \gamma _{1}$};
% Text Node
\draw (433.59,18.13) node [anchor=east] [inner sep=0.75pt]  [color={rgb, 255:red, 144; green, 19; blue, 254 }  ,opacity=1 ]  {$\gamma _{2}$};
% Text Node
\draw (255.14,20.51) node [anchor=east] [inner sep=0.75pt]  [color={rgb, 255:red, 144; green, 19; blue, 254 }  ,opacity=1 ]  {$T_{1}^{-1} \gamma _{2}$};
% Text Node
\draw (340.5,149.9) node [anchor=north west][inner sep=0.75pt]    {$\simeq $};

\end{tikzpicture}

\caption{\label{Fig2Covers}The covers used in the proof of the third case.}
\end{center}
\end{figure}
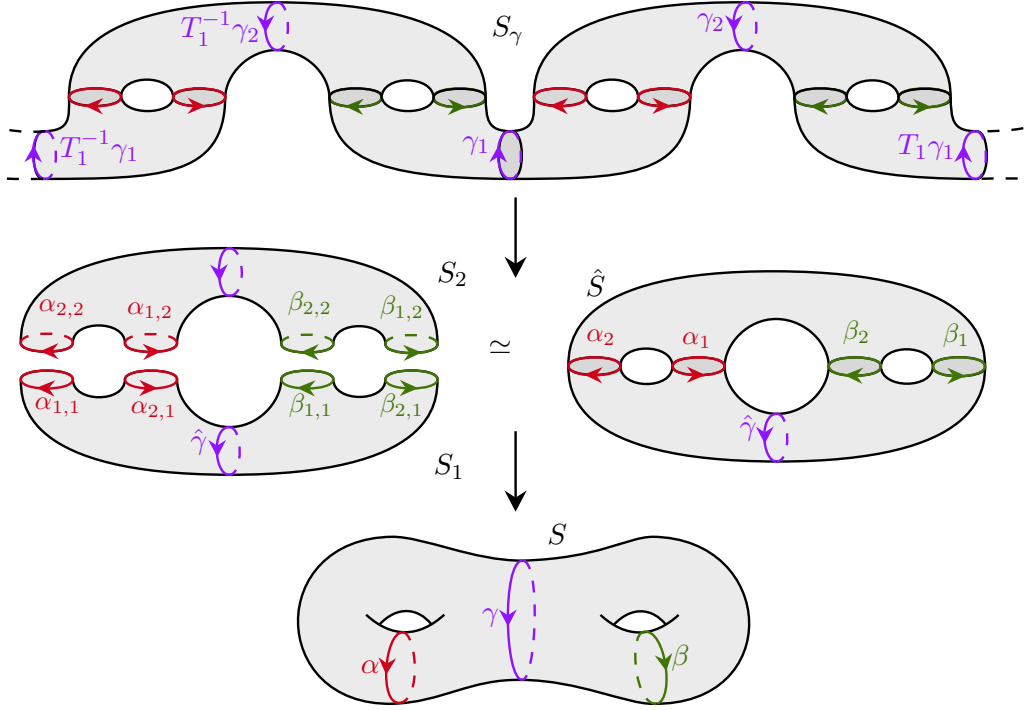

Denote by $\hat{T}$ the nontrivial deck transformation of this covering map. Fix a lift $\hat{\gamma}$ of the curve $\gamma$ to $\hat{S}$ and observe that the closed curve $\hat{\gamma}$ is not separating in $\hat{S}$. Denote by $\hat{f}$ a lift of $f$ to $\hat{S}$. As $\hat{\gamma}$ is nonseparating, we can use the transverse rotation set $\rho^{t}_{\hat{\gamma}}(\hat{f})$ of $\hat{f}$ relative to $\hat{\gamma}$ that we used in the first case.
\bigskip

Suppose first that $\rho^{t}_{\hat{\gamma}}(\hat{f})=\left\{ 0 \right\}$. In this case, we will prove that $f$ does not act hyperbolically on $\mathcal{C}^{\dagger}(S)$. The proof in this case will be similar to the proof in the first case.

Denote by $S_\gamma$ the cyclic covering over $\hat{S}$ corresponding to the map
$$ \begin{array}{rcl}
H_1(\hat{S},\mathbb{R}) & \rightarrow & \mathbb{Z} \\
 a & \mapsto & a \wedge [\hat{\gamma}]
 \end{array} .$$
Denote by $T_1$ a generator of the group of deck transformations of the covering map $S_\gamma \rightarrow \hat{S}$. Fix a lift $\gamma_1$ of $\hat{\gamma}$ to $S_\gamma$ and denote by $\gamma_2$ the lift of $T(\hat{\gamma})$ that lies in the bounded component of $S_\gamma \setminus (\gamma_1 \cup T_1\gamma_1)$.

Finally, for any simple closed curve $\sigma$ which is isotopic to $\gamma$, we define 
$$ C_{\gamma}(\sigma)= \card \big( \left\{n \in \mathbb{Z} \mid  T_1^{n} \gamma_1 \cap \tilde{\sigma} \neq \emptyset \right\}\big) + \card \big( \left\{n \in \mathbb{Z} \mid T_1^{n} \gamma_2 \cap \tilde{\sigma} \neq \emptyset \right\} \big),$$
where $\tilde{\sigma}$ is a lift of $\sigma$ to $S_{\gamma}$. Observe that this quantity does not depend on the chosen lift $\sigma$.

\begin{lemma} \label{Lem:distanceestimates2}
For any simple closed curve $\alpha$ on $S$ which is isotopic to $\gamma$,
$$\mathcal{C}_\gamma(\alpha)+1 \geq d_{C^{\dagger}}(\alpha,\gamma).$$
\end{lemma}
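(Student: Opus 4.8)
The plan is to mirror, almost verbatim, the proof of Lemma~\ref{Lem:distanceestimates}: the same induction on the intersection count, the same trivial base case, and the same cut-and-push construction of an auxiliary curve. The only genuinely new ingredient is the bookkeeping caused by the fact that the preimage of $\gamma$ in $S_\gamma$ now consists of \emph{two} $T_1$-orbits rather than one.

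First I would reorganize the preimage of $\gamma$ in $S_\gamma$ into a single bi-infinite family. Writing $\delta_{2k}:=T_1^k\gamma_1$ and $\delta_{2k+1}:=T_1^k\gamma_2$ for $k\in\mathbb{Z}$, one gets a family $\{\delta_m\}_{m\in\mathbb{Z}}$ of pairwise disjoint simple closed curves, each of which separates $S_\gamma$ (for the $\gamma_1$-orbit this is the standard fact about lifts of the nonseparating curve $\hat{\gamma}$ in the cyclic cover dual to $[\hat{\gamma}]$; for the $\gamma_2$-orbit it follows because $\gamma_1\cup\gamma_2$ is separating in $\hat{S}$ — one reads this off the description $\hat{S}=\widehat{S_E}\cup_{\gamma_1\sqcup\gamma_2}\widehat{S_F}$ — hence its lifts are separating in $S_\gamma$), and with the ordering chosen so that $\delta_m$ separates $\delta_{m'}$ from $\delta_{m''}$ whenever $m'<m<m''$ (this uses that $\gamma_2$ lies in the bounded component of $S_\gamma\setminus(\gamma_1\cup T_1\gamma_1)$). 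For any simple closed curve $\sigma$ isotopic to $\gamma$ — so that $\sigma$ lifts to $S_\gamma$ — the quantity $\mathcal{C}_\gamma(\sigma)$ is then exactly the number of curves $\delta_m$ met by a fixed lift $\tilde{\sigma}$ of $\sigma$, and the configuration $\{\delta_m\}_{m\in\mathbb{Z}}$ in $S_\gamma$ has the same combinatorial features as the family $\{T^k(c_c)\}_{k\in\mathbb{Z}}$ of the first case.

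With this relabelling I would run the induction on $n=\mathcal{C}_\gamma(\alpha)$ following the proof of Lemma~\ref{Lem:distanceestimates}. For $n=0$, a lift of $\alpha$ is disjoint from every $\delta_m$, hence $\alpha$ is disjoint from $\gamma$ and $d_{C^\dagger}(\alpha,\gamma)\le 1$. For the inductive step, fix a lift $\tilde{\alpha}$ of $\alpha$; since $\tilde{\alpha}$ is connected and each $\delta_m$ separates $S_\gamma$, the set of indices $m$ with $\delta_m\cap\tilde{\alpha}\neq\emptyset$ is an interval $\{m_0,\dots,m_0+n\}$, so $\delta_{m_0-1}$ and $\delta_{m_0+n+1}$ miss $\tilde{\alpha}$. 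I would then reproduce the construction of the curve $\beta$ exactly as in the proof of Lemma~\ref{Lem:distanceestimates} and Figure~\ref{FigConsbeta}: take the bounded component $\Sigma'$ of the region cut out between $\tilde{\alpha}$, $\delta_{m_0}$, the neighbouring lift of $\alpha$, and $\delta_{m_0+n+1}$, note that $\operatorname{int}(\Sigma')$ meets no lift of $\alpha$ and meets exactly the $n$ curves $\delta_{m_0+1},\dots,\delta_{m_0+n}$, and let $\beta$ be the projection of a curve in $\operatorname{int}(\Sigma')$ isotopic to the neighbouring lift of $\alpha$. Then $\beta$ is isotopic to $\gamma$, disjoint from $\alpha$, and $\mathcal{C}_\gamma(\beta)=n$; the induction hypothesis gives $d_{C^\dagger}(\beta,\gamma)\le n+1$, whence $d_{C^\dagger}(\alpha,\gamma)\le n+2$, completing the induction.

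The main obstacle is that, unlike in the first case, the cover $S_\gamma\to S$ need not carry a deck transformation implementing the unit shift $\delta_m\mapsto\delta_{m+1}$ of the family $\{\delta_m\}$ (the generating deck transformation of $S_\gamma\to\hat{S}$ shifts the index by $2$, and the nontrivial deck transformation of $\hat{S}\to S$ lifts to a map reversing the order of this family). So in the inductive step one cannot simply name the curves bounding $\Sigma'$ as images of $\tilde{\alpha}$ and $\delta_{m_0}$ under a deck transformation; instead one must use the adjacent components of the preimages of $\alpha$ and of $\gamma$ in $S_\gamma$ directly. I expect this point — in particular checking that $\Sigma'$, defined this way, has interior disjoint from every lift of $\alpha$ and meeting exactly $n$ of the curves $\delta_m$ — to be the one place where the argument requires slightly more than a transcription of the proof of Lemma~\ref{Lem:distanceestimates}; everything else is a direct copy.
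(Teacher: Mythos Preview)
Your proposal is correct and follows exactly the approach the paper intends: the paper's entire proof of this lemma is the single sentence ``The proof of this lemma is almost identical to the proof of Lemma~\ref{Lem:distanceestimates}.'' You have simply spelled out the details the authors suppress, including the one genuine wrinkle---that the preimage of $\gamma$ in $S_\gamma$ is two $T_1$-orbits rather than one, so the shift-by-one in the original argument must be replaced by passing to the adjacent component of the full preimage---and your handling of that point is sound.
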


\begin{proof}
The proof of this lemma is almost identical to the proof of Lemma \ref{Lem:distanceestimates}.
\end{proof}

Now, similarly to the first case, as $\rho^{t}_{\hat{\gamma}}(\hat{f})=\left\{ 0 \right\}$,
$$\lim_{n \rightarrow +\infty} \frac{1}{n} \mathcal{C}_\gamma(f^{n}(\gamma))=0.$$
Hence
$$\lim_{n \rightarrow +\infty} \frac{1}{n} d_{C^{\dagger}}(f^{n}(\gamma),\gamma)=0$$
and $f$ does not act hyperbolically on $C^{\dagger}(S)$.
\bigskip

Suppose now that $\rho^{t}_{\hat{\gamma}}(\hat{f}) \neq \left\{ 0 \right\}$. Then we can find an ergodic $\hat{f}$-invariant probability measure with non-zero rotation vector $\hat{c}$. Observe that this rotation vector has a nonzero intersection with the homology class $\hat{a}$ of one of lifts $\hat{\alpha}$ of $\alpha$ to $\hat{S}$ and with the homology class $\hat{b}$ of one of the lifts $\hat{\beta}$ of $\beta$ to $\hat{S}$. By Theorem \ref{Lellouch}, we can suppose, up to changing $\hat{c}$, $\hat{b}$ and $\hat{a}$ for nontrivial colinear vectors, that the homology classes $\hat{c}$, $\frac{1}{2}(\hat{c}+\hat{a})$, $\frac{1}{2}(\hat{c}+\hat{b})$ and $\frac{1}{3}(\hat{c}+\hat{b}+\hat{a})$ are limits of homology rotation vectors realized by periodic orbits. Hence the homology classes $c$ (which is the projection of $\hat{c}$ on $H_1(S,\R)$), $\frac{1}{2}(c+a)$, $\frac{1}{2}(c+b)$ and $\frac{1}{3}(c+b+a)$ are elements of $\overline{\rho_{erg}(f)}$.
Take a homology class $a' \in \rho_{erg}(f) \cap E$ which is realized by a $f$-periodic orbit such that $a' \wedge a \neq 0$ and a homology class $b' \in \rho_{erg}(f) \cap F$ which is realized by a $f$-periodic orbit such that $b' \wedge b \neq 0$.

\begin{lemma}\label{LastLem}
One of the following holds.
\begin{enumerate}
\item $c \wedge a' \neq 0$ and $c \wedge b' \neq 0$.
\item $\frac{1}{2}(c+a) \wedge a' \neq 0$ and $\frac{1}{2}(c+a) \wedge b' \neq 0$.
\item $\frac{1}{2}(c+b) \wedge a' \neq 0$ and $\frac{1}{2}(c+b) \wedge b' \neq 0$.
\item $\frac{1}{3}(c+b+a) \wedge a' \neq 0$ and $\frac{1}{3}(c+b+a) \wedge b' \neq 0$.
\end{enumerate}
\end{lemma}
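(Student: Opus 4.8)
The plan is to reduce the lemma to elementary bilinear algebra, using only the direct sum decomposition $H_1(S,\R)=E\oplus F$ with $E$ and $F$ orthogonal for $\wedge$, the memberships $a,a'\in E$ and $b,b'\in F$, and the two nondegeneracy facts $a\wedge a'\neq 0$, $b\wedge b'\neq 0$ built into the choices of $a'$ and $b'$. Here $a\in E$ because the homology class of $\alpha=\gamma_{x_E}$ is collinear to $r(x_E)\in E\setminus\{0\}$, and similarly $b\in F$. Write $c=c_E+c_F$ with $c_E\in E$ and $c_F\in F$. Since $E$ and $F$ are $\wedge$-orthogonal, every $v=v_E+v_F\in H_1(S,\R)$ satisfies $v\wedge a'=v_E\wedge a'$ and $v\wedge b'=v_F\wedge b'$; in particular $a\wedge b'=0=b\wedge a'$.

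First I would observe that at least one of $c_E\wedge a'$ and $(c_E+a)\wedge a'$ is nonzero, because their difference equals $a\wedge a'\neq 0$; likewise at least one of $c_F\wedge b'$ and $(c_F+b)\wedge b'$ is nonzero. Distributing conjunction over disjunction, at least one of the following holds: (i) $c_E\wedge a'\neq 0$ and $c_F\wedge b'\neq 0$; (ii) $(c_E+a)\wedge a'\neq 0$ and $c_F\wedge b'\neq 0$; (iii) $c_E\wedge a'\neq 0$ and $(c_F+b)\wedge b'\neq 0$; (iv) $(c_E+a)\wedge a'\neq 0$ and $(c_F+b)\wedge b'\neq 0$.

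It remains to match (i)--(iv) with the four cases of the lemma. Using $v\wedge a'=v_E\wedge a'$, $v\wedge b'=v_F\wedge b'$ together with $a\wedge b'=b\wedge a'=0$, the $E$-components of the four relevant vectors are $c\mapsto c_E$, $\frac{1}{2}(c+a)\mapsto\frac{1}{2}(c_E+a)$, $\frac{1}{2}(c+b)\mapsto\frac{1}{2}c_E$, $\frac{1}{3}(c+a+b)\mapsto\frac{1}{3}(c_E+a)$, and symmetrically for the $F$-components; hence $c\wedge a'\neq 0\Leftrightarrow c_E\wedge a'\neq 0$, $\frac{1}{2}(c+a)\wedge a'\neq 0\Leftrightarrow (c_E+a)\wedge a'\neq 0$, $\frac{1}{2}(c+b)\wedge a'\neq 0\Leftrightarrow c_E\wedge a'\neq 0$, $\frac{1}{3}(c+a+b)\wedge a'\neq 0\Leftrightarrow (c_E+a)\wedge a'\neq 0$, and likewise $c\wedge b'\neq 0\Leftrightarrow c_F\wedge b'\neq 0$, $\frac{1}{2}(c+a)\wedge b'\neq 0\Leftrightarrow c_F\wedge b'\neq 0$, $\frac{1}{2}(c+b)\wedge b'\neq 0\Leftrightarrow (c_F+b)\wedge b'\neq 0$, $\frac{1}{3}(c+a+b)\wedge b'\neq 0\Leftrightarrow (c_F+b)\wedge b'\neq 0$. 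Reading this off, (i), (ii), (iii), (iv) are exactly cases 1, 2, 3, 4 of the lemma, so one of them holds. I do not expect any genuine obstacle here: the whole argument is the bilinearity of $\wedge$, the orthogonality of $E$ and $F$, and a $2\times 2$ pigeonhole, and the four cases in the statement are designed precisely so that this closes.
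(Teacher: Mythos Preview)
Your proof is correct and follows essentially the same approach as the paper. The paper's version is slightly more streamlined: instead of decomposing $c=c_E+c_F$, it just records $a'\wedge b=a\wedge b'=0$ to rewrite cases 2--4 as conditions on $c\wedge a'$, $(c+a)\wedge a'$, $c\wedge b'$, $(c+b)\wedge b'$, and then runs the same $2\times 2$ case split on whether $c\wedge a'$ and $c\wedge b'$ vanish; your explicit $E$--$F$ splitting of $c$ is an equivalent bookkeeping device for the same bilinear pigeonhole.
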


\begin{proof}
Observe that as $a'\wedge b = a\wedge b' = 0$, the cases of the lemma are equivalent to the following ones:
\begin{enumerate}
\setcounter{enumi}{1}
\item $(c+a) \wedge a' \neq 0$ and $c \wedge b' \neq 0$.
\item $c \wedge a' \neq 0$ and $(c+b) \wedge b' \neq 0$.
\item $(c+a) \wedge a' \neq 0$ and $(c+b) \wedge b' \neq 0$.
\end{enumerate}
We then argue by case disjunction.
\begin{enumerate}[label=\alph*)]
\item If $c \wedge a' \neq 0$ and $c \wedge b' \neq 0$, then 1. holds.
\item If $c \wedge a' \neq 0$ and $c \wedge b' = 0$, then $(c+b) \wedge b' = b \wedge b'\neq 0$, so 3. holds.
\item If $c \wedge a' = 0$ and $c \wedge b' \neq 0$, then $(c+a) \wedge a' = a \wedge a' \neq 0$ so 2. holds.
\item If $c \wedge a' = 0$ and $c \wedge b' = 0$, then $(c+a) \wedge a' = a \wedge a' \neq 0$, and $(c+b) \wedge b' = b \wedge b'\neq 0$, so 4. holds.
\end{enumerate}
\end{proof}

In any case of Lemma~\ref{LastLem}, we have obtained a vector in $\rho_{erg}(f)\setminus\{0\}$ which is not orthogonal to a vector in $E$ and which is not orthogonal to a vector in $F$. This contradicts the hypothesis $\rho_{erg}(f) \subset E \cup F$, with $E$ and $F$ orthogonal. 

This finishes the proof of Theorem~\ref{maintheorem}.
\hfill \qedsymbol

\small

\bibliographystyle{amsalpha}
\bibliography{Biblio}

\end{document}